\theoremstyle{plain}
\newtheorem{theorem}{Theorem}[section]
\newtheorem{lemma}[theorem]{Lemma}
\newtheorem{proposition}[theorem]{Proposition}
\newtheorem{corollary}[theorem]{Corollary}
\theoremstyle{definition}
\newtheorem{definition}[theorem]{Definition}
\newtheorem{question}[theorem]{Question}
\newcommand{\cl}{\mathsf{cl}}
\newcommand{\diam}{\mathsf{diam}}
\newcommand{\re}{\upharpoonright}
\newcommand{\ZFC}{\mathsf{ZFC}}
\newcommand{\SU}{\mathsf{SU}}
\newcommand{\Diff}{\mathsf{D}}
\newcommand{\Cof}{\mathsf{Cof}}
\newcommand{\Fin}{\mathsf{Fin}}
\newcommand{\Ss}{\mathcal{S}}
\newcommand{\BB}{\mathcal{B}}
\newcommand{\TT}{\mathcal{T}}
\newcommand{\FF}{\mathcal{F}}
\newcommand{\GG}{\mathcal{G}}
\newcommand{\XX}{\mathcal{X}}
\newcommand{\YY}{\mathcal{Y}}
\newcommand{\II}{\mathcal{I}}
\newcommand{\PP}{\mathcal{P}}
\newcommand{\RR}{\mathcal{R}}
\newcommand{\ZZ}{\mathcal{Z}}
\newcommand{\CCC}{\mathbf{C}}
\newcommand{\DDD}{\mathbf{D}}
\newcommand{\SSS}{\mathbf{S}}
\newcommand{\TTT}{\mathbf{T}}
\newcommand{\PPP}{\mathbf{P}}
\newcommand{\QQQ}{\mathbf{Q}}
\begin{document}

\title{On Borel semifilters}

\author{Andrea Medini}
\address{Kurt G\"odel Research Center for Mathematical Logic
\newline\indent University of Vienna
\newline\indent W\"ahringer Stra{\ss}e 25
\newline\indent A-1090 Wien, Austria}
\email{andrea.medini@univie.ac.at}
\urladdr{http://www.logic.univie.ac.at/\~{}medinia2/}

\keywords{Semifilter, Borel, homogeneous, filter, h-homogeneous, Wadge.}

\thanks{The author acknowledges the support of the FWF grant M 1851-N35.}

\date{June 7, 2016}

\begin{abstract}
Building on work of van Engelen and van Mill, we show that a zero-dimensional Borel space is homeomorphic to a semifilter if and only if it is homogeneous and not locally compact.
Under $\mathbf{\Sigma}^1_1$-Determinacy, this result extends to all analytic and coanalytic spaces.
\end{abstract}

\maketitle

\section{Introduction}

Throughout this paper, $\Omega$ will denote a countably infinite set. We will denote by $\PP(\Omega)$ the collection of all subsets of $\Omega$.
Define $\Fin(\Omega)=\{x\subseteq\Omega:x\text{ is finite}\}$ and $\Cof(\Omega)=\{x\subseteq\Omega:\Omega\setminus x\text{ is finite}\}$.
Also define $\Fin=\Fin(\omega)$ and $\Cof=\Cof(\omega)$.

A collection $\XX\subseteq\PP(\Omega)$ is \emph{upward-closed} if and only if $y\supseteq x\in\XX$ implies $y\in\XX$ for all $x,y\in\PP(\Omega)$.
We will write $x\subseteq^\ast y$ to mean that $x\setminus y$ is finite, and we will write $x=^\ast y$ to mean that $x\subseteq^\ast y$ and $y\subseteq^\ast x$.
A collection $\XX\subseteq\PP(\Omega)$ is \emph{closed under finite modifications} if and only if $x\in\XX$ and $y=^\ast x$ implies $y\in\XX$ for all $x,y\in\PP(\Omega)$.

A \emph{semifilter on $\Omega$} is a collection $\Ss\subseteq\PP(\Omega)$ that satisfies the following conditions.
\begin{itemize}
\item $\varnothing\notin\Ss$ and $\Omega\in\Ss$.
\item $\Ss$ is closed under finite modifications.
\item $\Ss$ is upward-closed.
\end{itemize}
All semifilters are assumed to be on $\omega$ unless we explicitly say otherwise.

The notion of semifilter is a natural weakening of the notion of filter, and it has found applications in several areas of mathematics (see \cite{banakhzdomskyy}).

Throughout this paper, we will freely identify any collection $\XX\subseteq\PP(\Omega)$ with the subspace of $2^{\Omega}$ consisting of the characteristic functions of elements of $\XX$. In particular, every semifilter will inherit the subspace topology from $2^\omega$.
Notice that $\Cof\subseteq\Ss$ and $\Fin\cap\Ss=\varnothing$ for every semifilter $\Ss$. In particular, every semifilter is dense in $2^\omega$ and not locally compact.

By \emph{space} we will always mean separable metrizable topological space.
Recall that a space $X$ is \emph{homogeneous} if and only if for every $x,y\in X$ there exists a homeomorphism $h:X\longrightarrow X$ such that $h(x)=y$.
This is a classical notion in topology, which has been studied in depth. In particular, in his remarkable doctoral thesis \cite{vanengelent}, van Engelen obtained a complete classification of the homogeneous zero-dimensional Borel spaces.
Recall that a space $X$ is an \emph{absolute Borel set} (or simply \emph{Borel}) if every homeomorphic copy of $X$ in any space $Z$ is a Borel subspace of $Z$.
Using Lavrentiev's Theorem (see \cite[Theorem 3.9]{kechris}), one can show that a space is Borel if and only if it is homeomorphic to a Borel subspace of some completely metrizable space.

The following is our main result, and it is a consequence of Corollary \ref{semifilterhomogeneous}, Theorem \ref{indelta} and Theorem \ref{notindelta}.
In Section 14, assuming $\mathbf{\Sigma}^1_1$-Determinacy, we will show that this result extends to all analytic and coanalytic spaces.
\begin{theorem}\label{main}
Let $X$ be a zero-dimensional Borel space. Then the following conditions are equivalent.
\begin{itemize}
\item $X$ is homeomorphic to a semifilter.
\item $X$ is homogeneous and not locally compact.
\end{itemize}
\end{theorem}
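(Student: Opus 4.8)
The plan is to establish the two implications separately, using throughout van Engelen's classification of the homogeneous zero-dimensional Borel spaces \cite{vanengelent}. For ``$X$ homeomorphic to a semifilter'' $\Rightarrow$ ``$X$ homogeneous and not locally compact'', the failure of local compactness is immediate, since any semifilter misses the dense set $\Fin$. For homogeneity, the key observation is that a semifilter $\Ss$ is strongly self-similar: if $F\subseteq\omega$ is finite, then because $\Ss$ is closed under finite modifications the relation $x\in\Ss$ depends only on $x\setminus F$, so under the canonical identification $2^\omega=2^F\times 2^{\omega\setminus F}$ we have $\Ss=2^F\times\Ss_F$, where $\Ss_F=\{z\subseteq\omega\setminus F:z\in\Ss\}$; consequently every basic clopen subset of $\Ss$ is homeomorphic to one of the $\Ss_F$. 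One then shows (this is where Borelness is used, via results of van Engelen and van Mill) that all of the $\Ss_F$ are homeomorphic to $\Ss$, so that $\Ss$ is $h$-homogeneous, and finally that an $h$-homogeneous, nowhere locally compact, zero-dimensional Borel space is homogeneous. This is Corollary \ref{semifilterhomogeneous}.

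For the converse, suppose $X$ is zero-dimensional, Borel, homogeneous, and not locally compact. Local compactness transfers between points of a homogeneous space, so $X$ is in fact nowhere locally compact, and van Engelen's theorem then identifies $X$ with one of his canonical homogeneous spaces $\mathbf{T}_{\mathbf{\Gamma}}$, indexed by the (countably many) appropriate Borel Wadge classes $\mathbf{\Gamma}$. These classes split into the self-dual ones --- for which $X$ lies ``in $\mathbf{\Delta}$'' --- and the non-self-dual ones, and the remaining task is to realize every such $X$ as a semifilter, which is carried out in these two exhaustive cases by Theorem \ref{indelta} and Theorem \ref{notindelta} respectively.

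Both of those are proved by an explicit, transfinite construction of semifilters. The base cases are already visible in the elementary examples: $\Cof$ is a countable, dense-in-itself metrizable space and hence homeomorphic to $\mathbb{Q}$; $\{x:x\text{ is infinite}\}=2^\omega\setminus\Fin$ is a zero-dimensional, nowhere locally compact Polish space and hence homeomorphic to $\omega^\omega$; and, for a fixed infinite and co-infinite $A\subseteq\omega$, the semifilter $\{x:A\subseteq^\ast x\}$ equals $\Cof(A)\times 2^{\omega\setminus A}$, which is homeomorphic to $\mathbb{Q}\times 2^\omega$. The inductive step is a Fubini-type operation: given a partition $\omega=\bigcup_n I_n$ into infinite pieces, semifilters $\TT_n$ on the $I_n$, and a semifilter $\Ss$ on $\omega$, the collection $\{x:\{n:x\cap I_n\in\TT_n\}\in\Ss\}$ is again a semifilter (the four defining conditions are routine to check), and both its homeomorphism type and its Wadge complexity are governed by those of $\Ss$ and the $\TT_n$. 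Iterating such operations, so as to mirror van Engelen's own inductive construction of the spaces $\mathbf{T}_{\mathbf{\Gamma}}$, produces for each target $X$ a candidate semifilter $\Ss_X$; to conclude that $\Ss_X$ is homeomorphic to $X$ one checks, against van Engelen's characterization theorem for $\mathbf{T}_{\mathbf{\Gamma}}$, that $\Ss_X$ is zero-dimensional (clear), nowhere locally compact (automatic for semifilters), $h$-homogeneous, and ``everywhere $\mathbf{\Gamma}$'', meaning that $\Ss_X$ together with all of its restrictions to cofinite sets is $\mathbf{\Gamma}$-complete but not in $\check{\mathbf{\Gamma}}$. The main obstacle is exactly this bookkeeping: one must run the construction so as to pin the Wadge complexity of the resulting semifilter, and of each of its clopen pieces, to precisely $\mathbf{\Gamma}$ --- neither too low nor too high --- while simultaneously keeping the space $h$-homogeneous, so that van Engelen's uniqueness theorem applies. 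The rigidity imposed by upward closure and closure under finite modifications makes these complexity estimates delicate, and the self-dual case (Theorem \ref{indelta}), in which $\mathbf{T}_{\mathbf{\Delta}}$ has to be assembled as a supremum of lower classes, requires bookkeeping somewhat different from the non-self-dual case (Theorem \ref{notindelta}); this is the technical heart of the argument.
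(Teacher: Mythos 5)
Your overall architecture is right: homogeneity of semifilters from closure under finite modifications, plus a converse that runs through van Engelen's classification, split into the case in $\Delta$ and the case outside $\Delta$. But the technical heart of the converse is exactly what your proposal leaves unresolved, and the mechanism you propose for it does not match what actually works. First, a factual error: the relevant non-self-dual Borel Wadge classes are \emph{not} countably many (there are uncountably many, indexed by $u\in D_0$), so ``iterating the Fubini operation so as to mirror van Engelen's inductive construction'' would have to be a genuinely transfinite construction through Louveau's hierarchy, with Wadge-complexity estimates for Fubini products of semifilters at every limit stage --- none of which you supply, and which is known to be delicate. The paper avoids this entirely above $\Delta$: by Steel's theorem (Theorem \ref{steeltheorem}), two homogeneous Borel subspaces of $\CCC$ outside $\Delta$ are homeomorphic as soon as they are Wadge-equivalent and of the same Baire category (Theorem \ref{steelconsequence}), so one only needs to produce, for the \emph{given} $X$, a single semifilter of the same Wadge class and category. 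This is done by a one-step construction (Theorem \ref{firstcategorysemiideal}): embed $X$ densely in the copy $K$ of $\CCC$ inside $\PP(2^{<\omega})$, take the downward closure $\hat X$ and saturate under finite unions, and verify via Lemmas \ref{closedwadge}, \ref{GdeltaFsigma} and \ref{separatedunions} that the Wadge class is preserved. You also omit the Baire-category matching, which is essential since each class $\mathbf{\Gamma}_u$ is generated by both a first category and a Baire homogeneous space. Below $\Delta$, the induction is genuinely needed, but the operations are complementation (Lemma \ref{semifiltercomplement}, using that the complement of a semifilter is a semiideal), product with $\QQQ$, and $(\CCC\times\CCC)\setminus(\Fin\times(\CCC\setminus\Ss))$, tracking van Engelen's Lemmas 3.4.9--3.4.12; your single Fubini operation is a correct way to build semifilters, but you give no argument that it reproduces the homeomorphism types $\XX_n^{(i)}$, and complementation (which is not of Fubini type) is indispensable there. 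Finally, your phrasing of the case split as ``self-dual versus non-self-dual classes'' is off: every homogeneous non-locally-compact Borel space generates a non-self-dual class (Lemma \ref{nonselfdual}); the actual dichotomy is membership in the specific ambiguous class $\Delta=\Diff_\omega(\mathbf{\Sigma}^0_2)\cap\check{\Diff}_\omega(\mathbf{\Sigma}^0_2)$.

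A smaller point on the easy direction: homogeneity of an arbitrary semifilter needs neither Borelness nor strong homogeneity. Your observation that $\Ss\cap[s]$ depends only on coordinates outside a finite set already shows that $\Ss\cap[s]\approx\Ss\cap[t]$ whenever $|s|=|t|$ (via the flip map $h_F$), and van Mill's Theorem \ref{homogeneitylemma} then gives homogeneity directly; this is Corollary \ref{finitemodificationshomogeneous}. Your detour --- first proving all the $\Ss_F$ homeomorphic to $\Ss$ to get h-homogeneity, then deducing homogeneity --- is both harder than necessary and close to circular, since the standard route to strong homogeneity of Borel semifilters (Corollary \ref{borelsemifilterstronglyhomogeneous}) goes through homogeneity first; indeed, whether every (non-Borel) semifilter is strongly homogeneous is posed as an open question in the paper.
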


\section{Topological preliminaries}

Given spaces $X$ and $Y$, we will write $X\approx Y$ to mean that $X$ is homeomorphic to $Y$. Let $\CCC=2^\omega$ denote the Cantor set, $\PPP=\omega^\omega$ denote Baire space, and $\QQQ$ denote the space of rationals. Given $s\in 2^{<\omega}$, we will use the notation $[s]=\{x\in\CCC:s\subseteq x\}$.

We will assume some familiarity with the theory of Borel sets, and in particular with the notions of $\mathbf{\Sigma}^0_\xi$, $\mathbf{\Pi}^0_\xi$ and $\mathbf{\Delta}^0_\xi$ subset of a space for $1\leq\xi<\omega_1$ (see for example \cite[Section 11.B]{kechris}). For brevity, we will simply write \emph{complete} when we mean completely metrizable. It is well-known that a subspace of a complete space is complete if and only if it is $\mathbf{\Pi}^0_2$ (see \cite[Theorem 3.11]{kechris}).

We will also assume that the reader is comfortable with the basic theory of analytic and coanalytic subsets of a complete space.
As in \cite[page 315]{kechris}, we will say that a space is \emph{analytic} (respectively \emph{coanalytic}) if it is homeomorphic to an analytic (respectively coanalytic) subspace of some complete space (see \cite[Section 4]{medinizdomskyy} for a more detailed treatment).

A subset of a space is \emph{clopen} if and only if it is closed and open. A space is \emph{zero-dimensional} if and only if it has a base consisting of clopen sets. It is well-known that a space is zero-dimensional if and only if it is homemorphic to a subspace of $\CCC$ (see for example \cite[Corollary 1.5.7]{vanmilli}). Given a topological property $\PP$, it will be convenient to say that a space $X$ is \emph{nowhere $\PP$} if and only if $X$ is non-empty and no non-empty open subspace of $X$ is $\PP$.

A space is \emph{dense in itself} if and only if it is non-empty and it has no isolated points. We will be using freely the following classical characterizations of $\QQQ$, $\CCC$, and $\PPP$ (see \cite[Theorem 2.4.1]{vanengelent}, \cite[Theorem 2.1.1]{vanengelent}, and \cite[Theorem 2.3.1]{vanengelent} respectively). Let $X$ be a zero-dimensional space.
\begin{itemize}
\item $X\approx\QQQ$ if and only if $X$ is a dense in itself countable space.
\item $X\approx\CCC$ if and only if $X$ is a dense in itself compact zero-dimensional space.
\item $X\approx\PPP$ if and only if $X$ is a complete nowhere compact zero-dimensional space.
\end{itemize}

In the rest of this article, we will often exclude locally compact spaces from our treatment, as they constitute the trivial case. In fact, it is easy to see that if $X$ is a zero-dimensional homogeneous locally compact space then either $X$ is discrete, $X\approx\CCC$, or $X\approx\omega\times\CCC$.

A space $X$ is \emph{first category} if and only if $X=\bigcup_{n\in\omega}X_n$, where the closure of each $X_n$ has empty interior.
A space $X$ is \emph{Baire} if and only if $\bigcap_{n\in\omega}U_n$ is dense in $X$ whenever each $U_n$ is an open dense subset of $X$.
We will be using freely the following well-known facts (see \cite[1.12.1]{vanengelent} and \cite[1.12.2]{vanengelent} respectively).
\begin{itemize}
\item Every homogeneous space is either first category or Baire.
\item If a Borel space is Baire then it contains a dense complete subspace.
\end{itemize}

A space $X$ is \emph{strongly homogeneous} (or \emph{h-homogeneous}) if and only if $U\approx X$ for every non-empty clopen subspace $U$ of $X$. It is well-known that every zero-dimensional strongly homogeneous space is homogeneous (see \cite[1.9.1]{vanengelent} or \cite[Proposition 3.32]{medinit}). The following is a special case of \cite[Theorem 2.4]{terada} (see also \cite[Theorem 2 and Appendix A]{medinih} or \cite[Theorem 3.2 and Appendix B]{medinit}).
\begin{lemma}[Terada]\label{pibase}
Let $X$ be a non-compact space, and assume that $X$ has a base $\BB$ consisting of clopen sets such that $U\approx X$ for every $U\in\BB$. Then $X$ is strongly homogeneous.
\end{lemma}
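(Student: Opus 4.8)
The plan is to prove strong homogeneity directly from its definition, i.e.\ to show that every nonempty clopen $V\subseteq X$ is homeomorphic to $X$. The strategy has two parts: build a clopen embedding $f\colon X\to V$ --- a homeomorphism of $X$ onto a clopen subset of $V$ --- with the property that $\bigcap_{n}f^{n}(X)=\varnothing$; then run a Cantor--Schr\"oder--Bernstein argument between $X$ and $V$, using $f$ one way and the inclusion $V\hookrightarrow X$ the other.

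First I would record two preliminaries. (i) Every nonempty clopen $V\subseteq X$ is non-compact: since $\BB$ is a base there is $U\in\BB$ with $U\subseteq V$, and $U\approx X$ is non-compact and closed in $V$. (ii) Hence $V$ can be partitioned into infinitely many nonempty clopen sets, because a non-compact zero-dimensional separable metrizable space has a countable clopen cover with no finite subcover, which after disjointification gives such a partition.

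Now fix a nonempty clopen $V\subsetneq X$ (the case $V=X$ being trivial) and put $R=X\setminus V$. Using (ii), write $X=\bigsqcup_{n\in\omega}E_{n}$ as a partition into nonempty clopen sets with $R\subseteq E_{0}$ and $E_{n}\subseteq V$ for every $n\geq 1$. For each $n$ pick $U_{n+1}\in\BB$ with $U_{n+1}\subseteq E_{n+1}$ and a homeomorphism $\psi_{n}\colon X\to U_{n+1}$; since $E_{n}$ is clopen in $X$, the restriction $\psi_{n}|_{E_{n}}$ carries $E_{n}$ homeomorphically onto a clopen subset of $U_{n+1}$. Gluing these restrictions over the partition $\{E_{n}\}$ yields a map $f\colon X\to X$ with $f(E_{n})\subseteq E_{n+1}$ for all $n$; routine checking --- using that each $E_{n}$ and each $f(E_{n})$ is clopen in $X$, and that a map defined piecewise on a countable clopen partition is continuous --- shows that $f$ is a homeomorphism of $X$ onto the clopen set $f(X)\subseteq\bigsqcup_{n\geq 1}E_{n}\subseteq V$. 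Since $f^{k}(X)\subseteq\bigsqcup_{n\geq k}E_{n}$, we obtain $\bigcap_{k}f^{k}(X)=\varnothing$.

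Finally I would carry out the Cantor--Schr\"oder--Bernstein step. With $S=V\setminus f(X)$, the decreasing sequence $X\supseteq f(X)\supseteq f^{2}(X)\supseteq\cdots$ telescopes into $X=\bigsqcup_{n\geq 0}\bigl(f^{n}(R)\sqcup f^{n}(S)\bigr)$ and $V=\bigsqcup_{n\geq 1}f^{n}(R)\sqcup\bigsqcup_{n\geq 0}f^{n}(S)$, and the bijection $V\to X$ that sends $f^{n}(R)$ onto $f^{n-1}(R)$ via $f^{-1}$ (for $n\geq 1$) and fixes each $f^{n}(S)$ pointwise, together with its inverse, is continuous because all the blocks $f^{n}(R),f^{n}(S)$ are clopen and together exhaust $X$. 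I expect this last step to be the main obstacle, since a Cantor--Schr\"oder--Bernstein argument is not available for topological spaces in general: the usual obstruction is the ``limit block'' $\bigcap_{k}f^{k}(X)$, which is closed but typically not open and on which the naturally defined bijection need not be continuous. Building $f$ as a ``shift'' along the partition $\{E_{n}\}$ --- which is possible only because $V$, and hence every relevant clopen piece, is non-compact --- is precisely what forces this limit block to be empty. A secondary, bookkeeping obstacle is the verification that the glued map $f$ really is a homeomorphism onto a clopen subset of $V$.
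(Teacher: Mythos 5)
Your proof is correct. The paper does not prove this lemma --- it only cites Terada's Theorem~2.4 (and the expositions in \cite{medinih} and \cite{medinit}) --- but your argument is essentially the standard one from those sources: use non-compactness to split $X$ into an infinite clopen partition $\{E_n\}$ with $E_n\subseteq V$ for $n\geq 1$, shift along the partition via basic clopen copies of $X$ to get a clopen self-embedding $f$ with empty limit block $\bigcap_k f^k[X]$, and then run the topological Cantor--Schr\"oder--Bernstein scheme, which works precisely because the vanishing of the limit block makes all blocks of the resulting decomposition open. The routine verifications you defer (that $f[X]$ is closed, via local finiteness of $\{f[E_n]\}$, and that the block decomposition of $V$ and $X$ consists of open sets) all go through.
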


\section{Filters, semiideals, and ideals}

A collection $\XX$ is \emph{closed under finite intersections} if and only if $x\cap y\in\XX$ for all $x,y\in\XX$.
A \emph{filter on $\Omega$} is a semifilter on $\Omega$ that is closed under finite intersections.

The following result is \cite[Theorem 3.4]{vanengeleni}, and it gives a purely topological characterization of filters among the zero-dimensional Borel spaces, in the same spirit as Theorem \ref{main}. In fact, it is the result which inspired this entire article. Our phrasing is slightly different from the original, but the discussion in the remainder of this section should clarify all the possible confusion.
\begin{theorem}[van Engelen]
Let $X$ be a zero-dimensional Borel space. Then the following conditions are equivalent.
\begin{itemize}
\item $X$ is homeomorphic to a filter.
\item $X$ is homogeneous, first category, homeomorphic to $X^2$, and not locally compact.
\end{itemize}
\end{theorem}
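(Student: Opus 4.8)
The plan is to establish both implications. The implication from ``filter'' to the four topological properties is the easier one; the converse is the substantial part and rests on van Engelen's classification of the homogeneous zero-dimensional Borel spaces.

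Suppose first that $X$ is homeomorphic to a filter $\FF$. That $X$ is not locally compact, and in fact dense in $2^\omega$, is immediate from $\Cof\subseteq\FF$ and $\Fin\cap\FF=\varnothing$. Next, observe that every filter in our sense contains $\Cof$, hence is non-principal; by Talagrand's theorem a non-principal filter with the Baire property is meager, and a Borel filter has the Baire property, so $X$ is first category. For homogeneity I would in fact prove the stronger statement that every Borel filter is strongly homogeneous, and then appeal to the fact that zero-dimensional strongly homogeneous spaces are homogeneous. Since $\FF$ is non-compact, by Terada's Lemma (Lemma~\ref{pibase}) it suffices to produce a base of clopen sets each homeomorphic to $\FF$, and for this it is enough to treat the basic clopen sets $\FF\cap[s]$ with $s\in2^{<\omega}$. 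Writing $n=\mathrm{dom}(s)$ and $a=s^{-1}(1)\subseteq n$, the map $x\mapsto x\setminus n$ is a homeomorphism of $\FF\cap[s]$ onto the trace $\FF_a=\{y\subseteq\omega\setminus n:y\cup a\in\FF\}$, which one checks directly is again a Borel filter (on $\omega\setminus n$), of the same Borel complexity as $\FF$. It then remains to see that $\FF_a\approx\FF$, which I would derive from a uniqueness principle: a Borel filter is determined up to homeomorphism by its position in the Wadge hierarchy (no category datum being needed, since Borel filters are automatically first category), and $\FF$ and $\FF_a$ share that invariant. The same principle yields $X\approx X^2$: fixing a bijection $\omega\cong\omega_0\sqcup\omega_1$ into two infinite pieces, the product filter $\GG=\{x\subseteq\omega:x\cap\omega_0\in\FF_0\text{ and }x\cap\omega_1\in\FF_1\}$ (with $\FF_0,\FF_1$ copies of $\FF$ on the two pieces) is a filter on $\omega$ satisfying $\GG\approx\FF\times\FF$ as a space, and it lies in the same Wadge class as $\FF$, so $\GG\approx\FF$.

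For the converse, assume $X$ is zero-dimensional, Borel, homogeneous, first category, homeomorphic to $X^2$, and not locally compact. By van Engelen's classification, $X$ is homeomorphic to one of the canonical homogeneous zero-dimensional Borel spaces, and the four hypotheses single out precisely the first-category, self-squaring, nowhere-locally-compact ones --- ``first category'' ruling out the Baire examples such as $\PPP$, and ``$X\approx X^2$'' ruling out the homogeneous spaces whose square has strictly larger Wadge type. One then realizes each surviving canonical space as a filter by transfinite recursion on the Borel rank: the base case is $\Cof\approx\QQQ$ (covering the countable case), and the recursion step uses filter-theoretic operations --- chiefly Fubini-type products of a filter with $\Cof$ or with itself, and countable sums of filters along $\Cof$ --- which keep us inside the class of filters while effecting exactly the changes in complexity by which van Engelen builds his canonical spaces from lower levels.

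The main obstacle, and the point on which both directions hinge, is the matching of the combinatorial side (filters and the operations on them) with the topological side (the canonical spaces and their Wadge invariants): one must verify that the filter produced at each stage is homogeneous of the intended complexity, and that ``first category'' and ``$X\approx X^2$'' are exactly the conditions confining us to the filter-realizable part of the list. I expect the single most delicate step to be the h-homogeneity of an arbitrary Borel filter --- equivalently, the well-definedness of the topological invariant on the class of all Borel filters --- since everything else (the forward direction's $X\approx X^2$, and the recursion in the converse) leans on it.
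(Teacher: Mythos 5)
The paper does not prove this theorem: it is quoted verbatim from van Engelen's \emph{On Borel ideals} \cite[Theorem 3.4]{vanengeleni}, so your sketch can only be measured against that original argument, whose broad architecture (meagerness via the Baire property, homogeneity via the group/translation structure of filters, and a recursion through the classification realizing each admissible canonical space as a filter) you do reproduce correctly in outline.

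There is, however, a genuine gap at the point you yourself identify as the crux. Your ``uniqueness principle'' --- that a Borel filter is determined up to homeomorphism by its Wadge class, with no further invariant needed --- is false. The filters $\Cof\approx\QQQ$ and $\{x\subseteq\omega:\Omega_1\subseteq^{\ast}x\}\approx\QQQ\times\CCC$ of Proposition \ref{QandQC} are both first category and both generate the Wadge class $\Diff_1^{\CCC}(\mathbf{\Sigma}^0_2)$, yet they are not homeomorphic; as remarked after Theorem \ref{belowdeltabelongs}, this ambiguity (Wadge class plus Baire category failing to determine the homeomorphism type) propagates throughout $\Delta$, and only disappears above $\Delta$, where Theorem \ref{steelconsequence} gives exactly the principle you want. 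Since everything in your forward direction ($\FF\cap[s]\approx\FF$ via the trace filter $\FF_a$, and $\FF\approx\FF^2$ via the product filter $\GG$) is routed through this principle, the argument breaks precisely for filters in $\Delta$. Strong homogeneity does not need it: Proposition \ref{filterstronglyhomogeneous} produces an explicit homeomorphism $\FF\approx\FF\cap[s]$ from a witness $\Omega\in\FF\setminus\Cof$, and mere homogeneity follows even more cheaply from the fact that every filter is homeomorphic to a topological subgroup of $\CCC$ (Section 3). For $X\approx X^2$ the gap is compounded by a second unproved claim, namely that $\GG\approx\FF\times\FF$ generates the same Wadge class as $\FF$: this requires knowing that the classes arising from filters are closed under the relevant intersections, which is part of what van Engelen actually has to prove. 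Below $\Delta$ the correct invariant is membership in the classes $\XX_n^{(i)}$ of Theorems \ref{belowdelta1} and \ref{belowdeltabelongs}, not the Wadge class, and your recursion in the converse direction would have to be organized around those classes as well (with both $\QQQ$ and $\QQQ\times\CCC$ as base cases, not only $\Cof$).
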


A collection $\XX\subseteq\PP(\Omega)$ is \emph{downward-closed} if and only if $x\subseteq y\in\XX$ implies $x\in\XX$ for all $x,y\in\PP(\Omega)$.
A \emph{semiideal on $\Omega$} is a collection $\RR\subseteq\PP(\Omega)$ that satisfies the following conditions.
\begin{itemize}
\item $\varnothing\in\RR$ and $\Omega\notin\RR$.
\item $\RR$ is closed under finite modifications.
\item $\RR$ is downward-closed.
\end{itemize}
A collection $\XX$ is \emph{closed under finite unions} if and only if $x\cup y\in\XX$ for all $x,y\in\XX$.
An \emph{ideal on $\Omega$} is a semiideal on $\Omega$ that is closed under finite unions.
All filters, semiideals and ideals are assumed to be on $\omega$ unless we explicitly say otherwise.

Next we will show that, from the topological point of view, semifilters (respectively filters) are indistinguishable from semiideals (respectively ideals).
Given any $F\subseteq\omega$, define $h_F:\CCC\longrightarrow \CCC$ by setting 
$$
\left.
\begin{array}{lcl}
& & h_F(x)(n)= \left\{
\begin{array}{ll}
1-x(n) & \textrm{if }n\in F,\\
x(n) & \textrm{if }n\in\omega\setminus F.
\end{array}
\right.
\end{array}
\right.
$$
It is easy to check that $h_F$ is a homeomorphism for every $F\subseteq\omega$.

Throughout this article, we will let $c=h_\omega$ denote the complement function.
Given any $\XX\subseteq\PP(\omega)$, it is trivial to check that $\XX$ is a semifilter (respectively a semiideal) if and only if $c[\XX]$ is a semiideal (respectively a semifilter).
Similarly, one sees that $\XX$ is a filter (respectively an ideal) if and only if $c[\XX]$ is an ideal (respectively a filter).
Since $\XX\approx c[\XX]$, this means that every result about the topology of semifilters (respectively filters) immediately translates to a result about semiideals (respectively ideals), and viceversa.

As an application of this principle, one can see that every filter is a topological group.
In fact, every ideal is a topological subgroup of $\CCC$ under the operation of coordinatewise addition modulo $2$,
and any space that is homeomorphic to a topological group is itself a topological group.
In particular, every filter is homogeneous. Corollary \ref{semifilterhomogeneous} shows that this hold for semifilters as well, but the proof is considerably more involved.
For an example of a semifilter that is not a topological group, consider the spaces $\SSS$ and $\TTT$ as described in Propositions \ref{S} and \ref{T}.
These spaces are not topological groups by \cite[Corollary 3.6.6]{vanengelent}.

We conclude this section by remarking that many authors (including van Engelen in \cite{vanengeleni}) give a more general notion of filter than the one we gave above.
The most general notion possible seems to be the following. Define a \emph{prefilter on $\Omega$} to be a collection of subsets of $\Omega$ that is upward-closed and closed under finite intersections.
The next proposition, which can be safely assumed to be folklore, shows that our definition of filter does not result in any substantial loss of generality. Given a collection $\XX$ consisting of subsets of $\omega$ and $\Omega\subseteq\omega$, define $\XX\re\Omega=\{X\cap\Omega:X\in\XX\}$.
\begin{proposition}
Let $\GG$ be an infinite prefilter on $\omega$. Then either $\GG\approx\CCC$ or $\GG\approx\FF$ for some filter $\FF$.
\end{proposition}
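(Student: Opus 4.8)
The plan is to analyze the set $P=\bigcap\GG$ and to split into cases according to whether $P\in\GG$. I would start with a preliminary reduction: if $\varnothing\in\GG$, then upward-closure forces $\GG=\PP(\omega)\approx\CCC$ and we are done, so one may assume $\varnothing\notin\GG$; and since an infinite $\GG$ is non-empty, picking any $x\in\GG$ and using upward-closure yields $\omega\in\GG$. Write $Q=\omega\setminus P$, and note that every member of $\GG$ contains $P$.

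In the case $P\in\GG$, upward-closure gives $\GG=\{x\subseteq\omega:x\supseteq P\}$, which (identifying sets with characteristic functions) is the set of $\chi\in\CCC$ with $\chi(k)=1$ for all $k\in P$; restriction to $\omega\setminus P$ then shows that $\GG$ is homeomorphic to $2^{\omega\setminus P}$. Since $\GG$ is infinite, $\omega\setminus P$ is infinite, so $\GG\approx\CCC$ and we land in the first alternative.

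In the case $P\notin\GG$, I would show that $\GG$ is homeomorphic to a genuine filter. First, the restriction map $x\mapsto x\cap Q$ is a homeomorphism of $\GG$ onto $\GG\re Q$: it is continuous and onto by definition, it is injective because $x=P\cup(x\cap Q)$ for every $x\in\GG$, and its inverse $w\mapsto P\cup w$ is continuous. One checks that $\GG\re Q$ is a prefilter on $Q$ with $\bigcap(\GG\re Q)=\varnothing$ and $\varnothing\notin\GG\re Q$ (the latter because $\varnothing\in\GG\re Q$ would force $P\in\GG$). As $\GG$ is infinite, so is $\GG\re Q$, hence $Q$ is infinite; after fixing a bijection of $Q$ with $\omega$, the problem reduces to showing that a prefilter $\GG$ on $\omega$ with $\bigcap\GG=\varnothing$ and $\varnothing\notin\GG$ is a filter. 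Such a $\GG$ is upward-closed, closed under finite intersections, and contains $\omega$ but not $\varnothing$, so the only thing left to verify is closure under finite modifications; by an obvious induction this comes down to showing $x\setminus\{n\}\in\GG$ whenever $x\in\GG$ and $n\in x$ (adding finitely many points being immediate from upward-closure). For this, since $n\notin\bigcap\GG$, there is $y\in\GG$ with $n\notin y$, and then $x\cap y\in\GG$ with $x\cap y\subseteq x\setminus\{n\}$, so $x\setminus\{n\}\in\GG$ by upward-closure, as required.

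I do not expect a serious obstacle here: the argument is short once one settles on $\bigcap\GG$ as the relevant invariant. The two points that need a little care are that deleting the ``forced'' coordinates in $\bigcap\GG$ implements a homeomorphism of $\GG$ onto $\GG\re Q$ (this is precisely what legitimizes the reduction to the case $\bigcap\GG=\varnothing$), and the one-line observation that a prefilter with empty intersection is automatically closed under finite modifications.
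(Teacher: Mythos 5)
Your proof is correct and follows essentially the same route as the paper: both split on whether $\bigcap\GG\in\GG$, identify $\GG$ with its restriction to $\omega\setminus\bigcap\GG$, and verify closure under finite modifications using that every $n\notin\bigcap\GG$ is avoided by some member of $\GG$ (hence, via finite intersections and upward-closure, by a member missing any prescribed finite set). The only cosmetic difference is that the paper packages this last step as $\Cof(\Omega)\subseteq\GG\re\Omega$, while you remove points one at a time.
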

\begin{proof}
Let $\Omega=\omega\setminus\bigcap\GG$, and observe that $\Omega$ is infinite because $\GG$ is infinite.
Notice that $\GG\re\Omega$ is a prefilter on $\Omega$. First assume that $\varnothing\in\GG\re\Omega$.
This means that $\bigcap\GG=\omega\setminus\Omega\in\GG$, hence $\GG=\{X\subseteq\omega:\bigcap\GG\subseteq X\}\approx\CCC$.

Now assume that $\varnothing\notin\GG\re\Omega$. We claim that $\GG\re\Omega$ is in fact a filter on $\Omega$.
In order to prove this claim, it will be enough to show that $\Cof(\Omega)\subseteq\GG\re\Omega$. So let $F$ be a finite subset of $\Omega$. Since $\Omega=\omega\setminus\bigcap\GG$ and $\GG$ is closed under finite intersections, there must be $X\in\GG$ such that $X\subseteq\omega\setminus F$.
It follows that $\omega\setminus F\in\GG$, hence $\Omega\setminus F\in\GG\re\Omega$. Finally, it is straightforward to check that $\GG\approx\GG\re\Omega$.
\end{proof}

\section{Every semifilter is homogeneous}

The following result is a fundamental tool for constructing homogeneous spaces, and it first appeared as \cite[Lemma 2.1]{vanmillh} (see also \cite[Lemma 1.9.1]{vanmilli}, or \cite[Theorem 3]{medvedev} for a more general result). Corollary \ref{finitemodificationshomogeneous} is essentially the same as \cite[Lemma 2]{medinivanmillzdomskyy}.

\begin{theorem}[van Mill]\label{homogeneitylemma}
Assume that $X$ is a zero-dimensional space, and fix a compatible metric on $X$. Let $x,y\in X$. Suppose that, for every $\varepsilon >0$,
there exist clopen neighborhoods $U$ and $V$ of $x$ and $y$ respectively such that $\diam(U)<\varepsilon$, $\diam(V)<\varepsilon$ and $U\approx V$.
Then there exists a homeomorphism $h:X\longrightarrow X$ such that $h(x)=y$.
\end{theorem}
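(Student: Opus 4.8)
The plan is to build $h$ as the pointwise limit of homeomorphisms $h_n:X\to X$ that are modified only on ever smaller clopen neighborhoods of $x$, in such a way that $h_n(x)\to y$.

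Fix a compatible metric $d$ on $X$ (we may assume $x\neq y$, otherwise take $h=\mathrm{id}$). I would recursively construct clopen sets $X=U_0\supseteq U_1\supseteq\cdots$ with $x\in U_n$ and homeomorphisms $h_n:X\to X$ with $h_0=\mathrm{id}$, maintaining for $n\geq 1$ that (i) $\diam(U_n)<2^{-n}$, (ii) $h_n$ agrees with $h_{n-1}$ on $X\setminus U_{n-1}$, and (iii) $y\in h_n[U_n]$ with $\diam(h_n[U_n])<2^{-n}$. Granting the construction, $\bigcap_n U_n=\{x\}$ by (i), so by (ii) the sequence $(h_n)$ is eventually constant on $X\setminus\{x\}$; setting $h(z)=\lim_n h_n(z)$ for $z\neq x$ and $h(x)=y$ (which is $\lim_n h_n(x)$ by (iii)), one verifies that $h$ is a homeomorphism with $h(x)=y$: it is injective because (iii) forces $y\notin h_{n-1}[X\setminus U_{n-1}]$, surjective because $\bigcap_n h_{n-1}[U_{n-1}]=\{y\}$, a local homeomorphism away from $x$ since it coincides locally with some $h_n$, and continuous and open at $x$ because $h[U_n]=h_n[U_n]$ is a clopen neighborhood of $y$ while $\{U_n\}$ and $\{h_n[U_n]\}$ are neighborhood bases at $x$ and $y$. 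This bookkeeping is routine.

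The real work is the recursive step. Given $h_{n-1}$ and $U_{n-1}$, set $V_{n-1}=h_{n-1}[U_{n-1}]\ni y$. I would apply the hypothesis with $\varepsilon>0$ chosen very small: smaller than $2^{-n}$, than $d(x,X\setminus U_{n-1})$ and $d(y,X\setminus V_{n-1})$, than a modulus of continuity of $h_{n-1}^{-1}$ at $y$ that forces $h_{n-1}^{-1}$ to carry $\varepsilon$-balls around $y$ into $2^{-n}$-balls, and — provided $x\neq h_{n-1}^{-1}(y)$ — also smaller than $d(x,h_{n-1}^{-1}(y))/3$ and than a modulus carrying $\varepsilon$-balls around $y$ into $(d(x,h_{n-1}^{-1}(y))/3)$-balls. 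This yields clopen neighborhoods $A\ni x$ and $B\ni y$ with $\diam(A),\diam(B)<\varepsilon$ and $A\approx B$; then $A\subseteq U_{n-1}$, $B\subseteq V_{n-1}$, and $A^\#:=h_{n-1}^{-1}[B]$ is a clopen subset of $U_{n-1}$ with $h_{n-1}[A^\#]=B$, $h_{n-1}^{-1}(y)\in A^\#$, and $\diam(A^\#)<2^{-n}$. If $x\in A^\#$, put $U_n=A^\#$ and $h_n=h_{n-1}$. If $x\notin A^\#$, then the smallness of $A$ and $A^\#$ relative to $d(x,h_{n-1}^{-1}(y))$ makes $A$ and $A^\#$ disjoint; since $A\approx A^\#$, let $\sigma:X\to X$ be the homeomorphism that interchanges $A$ and $A^\#$ and is the identity elsewhere, and put $h_n=h_{n-1}\circ\sigma$ and $U_n=A$. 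In both cases $h_n$ agrees with $h_{n-1}$ off $U_{n-1}$ and $h_n[U_n]=B$, so (i)--(iii) hold and the recursion continues.

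The main obstacle, to my mind, is exactly this step. The naive idea — extend the small homeomorphism $A\approx B$ to a homeomorphism of $U_{n-1}$ onto $V_{n-1}$ and glue it to $h_{n-1}$ — would require $U_{n-1}\setminus A\approx V_{n-1}\setminus B$, which need not hold, since homeomorphism of clopen sets does not cancel. The device that circumvents this is to move $x$ into $A^\#=h_{n-1}^{-1}[B]$ by swapping the two \emph{disjoint} small clopen copies $A$ and $A^\#$, rather than rearranging $U_{n-1}$ as a whole; securing their disjointness together with all the metric estimates is what forces the somewhat elaborate choice of $\varepsilon$. Everything else is the limiting argument sketched above.
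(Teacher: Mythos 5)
Your proof is correct, and it is essentially van Mill's original argument for this lemma (the paper does not reprove it but cites \cite[Lemma 2.1]{vanmillh}): the key device of swapping the two disjoint homeomorphic clopen sets $A$ and $A^\#=h_{n-1}^{-1}[B]$ to avoid the non-cancellation problem, followed by the limit of homeomorphisms supported on shrinking clopen neighborhoods, is exactly the standard construction. The only nitpick is that the modulus of continuity should send $\varepsilon$-balls around $y$ into sets of \emph{diameter} less than $2^{-n}$ (i.e., into $2^{-n-1}$-balls), but this is cosmetic and does not affect the argument.
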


\begin{corollary}\label{finitemodificationshomogeneous}
Let $X$ be a subspace of $\CCC$ that is closed under finite modifications. Then $X$ is homogeneous.
\end{corollary}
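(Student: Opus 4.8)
The plan is to apply van Mill's homogeneity lemma (Theorem~\ref{homogeneitylemma}), using the ``flip'' homeomorphisms $h_F$ introduced earlier in this section. Since $X\subseteq\CCC$, it is zero-dimensional, so I would fix the compatible metric $d$ on $X$ obtained by restricting the standard metric on $\CCC$ (namely $d(x,y)=2^{-k}$, where $k$ is least with $x(k)\neq y(k)$), under which $\diam([s]\cap X)\leq 2^{-n}$ whenever $s\in 2^n$. Fix arbitrary $x,y\in X$ and $\varepsilon>0$; the goal is to produce clopen neighborhoods $U$ of $x$ and $V$ of $y$ in $X$ with $\diam(U)<\varepsilon$, $\diam(V)<\varepsilon$, and $U\approx V$.

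First I would choose $n\in\omega$ with $2^{-n}<\varepsilon$, and set $s=x\re n$, $t=y\re n$, $U=[s]\cap X$, and $V=[t]\cap X$. These are clopen in $X$, contain $x$ and $y$ respectively, and have diameter at most $2^{-n}<\varepsilon$. Next let $F=\{k<n:s(k)\neq t(k)\}$, a finite subset of $\omega$, and consider the homeomorphism $h_F:\CCC\longrightarrow\CCC$. A direct check shows that $h_F\big[[s]\big]=[t]$, since flipping exactly the coordinates in $F$ carries any extension of $s$ to an extension of $t$ and vice versa. Moreover $h_F(z)=^\ast z$ for every $z\in\CCC$, because $h_F$ alters only the finitely many coordinates in $F$; since $X$ is closed under finite modifications, this gives $h_F[X]=X$. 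Hence $h_F$ restricts to a homeomorphism from $[s]\cap X=U$ onto $[t]\cap X=V$, so $U\approx V$.

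Having verified the hypothesis of Theorem~\ref{homogeneitylemma} for the pair $x,y$, I obtain a homeomorphism $h:X\longrightarrow X$ with $h(x)=y$; as $x$ and $y$ were arbitrary, $X$ is homogeneous. I do not expect any genuine obstacle in this argument: the single point requiring a moment's care is the observation that each flip map $h_F$ is a finite modification of the identity and therefore preserves $X$, which is precisely what allows arbitrarily small clopen neighborhoods of arbitrary points of $X$ to be matched up homeomorphically.
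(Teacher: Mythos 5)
Your argument is correct and is essentially the paper's own proof: both apply van Mill's lemma (Theorem~\ref{homogeneitylemma}) with $U=[x\re n]\cap X$, $V=[y\re n]\cap X$, and the flip homeomorphism $h_F$ for $F=\{k<n:x(k)\neq y(k)\}$, using that $h_F$ is a finite modification of the identity and hence preserves $X$. No substantive differences.
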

\begin{proof}
Fix a compatible metric on $\CCC$. Recall the definition of $h_F$ from Section 3.
Observe that $h_F[X]=X$ whenever $F$ is finite, because $X$ is closed under finite modifications.
In order to verify the hypotheses of Theorem \ref{homogeneitylemma}, fix $x,y\in X$ and $\varepsilon >0$.
Start by choosing $m\in\omega$ large enough so that $\diam([x\re m])<\varepsilon$ and $\diam([y\re m])<\varepsilon$.
Let $U=[x\re m]\cap X$ and $V=[y\re m]\cap X$. It is clear that $U$ and $V$ are clopen neighborhoods in $X$ of $x$ and $y$ respectively.
To see that $U\approx V$, simply observe that $h_F[U]=V$, where $F=\{n<m:x(n)\neq y(n)\}$. 
\end{proof}

\begin{corollary}\label{semifilterhomogeneous}
If $\Ss$ is a semifilter then $\Ss$ is homogeneous.
\end{corollary}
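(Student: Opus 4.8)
The plan is to observe that this is an immediate consequence of Corollary \ref{finitemodificationshomogeneous}. Recall that, following the convention fixed in the introduction, we identify the semifilter $\Ss$ with the subspace of $\CCC=2^\omega$ consisting of the characteristic functions of its elements. By the very definition of a semifilter, $\Ss$ is closed under finite modifications, so $\Ss$ is a subspace of $\CCC$ satisfying the hypothesis of Corollary \ref{finitemodificationshomogeneous}. Therefore $\Ss$ is homogeneous, and there is nothing more to do.

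The only point worth spelling out is the harmless clash of terminology: Section 1 defines "closed under finite modifications" for a collection $\XX\subseteq\PP(\omega)$ via the relation $=^\ast$, whereas the proof of Corollary \ref{finitemodificationshomogeneous} uses the condition $h_F[X]=X$ for every finite $F\subseteq\omega$. These agree once sets are identified with their characteristic functions, because for finite $F$ one has $h_F(x)=^\ast x$ for all $x\in\CCC$, and conversely any $y$ with $y=^\ast x$ equals $h_F(x)$ for a suitable finite $F$. Consequently no real obstacle arises here — all the substance lies in Corollary \ref{finitemodificationshomogeneous}, and hence ultimately in van Mill's Theorem \ref{homogeneitylemma}. (If one preferred a self-contained argument, one could instead unwind the proof of Corollary \ref{finitemodificationshomogeneous} directly for $\Ss$: given $x,y\in\Ss$ and $\varepsilon>0$, pick $m$ with $\diam([x\re m])<\varepsilon$ and $\diam([y\re m])<\varepsilon$, set $U=[x\re m]\cap\Ss$, $V=[y\re m]\cap\Ss$, and note that $h_F$ witnesses $U\approx V$ for $F=\{n<m:x(n)\neq y(n)\}$, so Theorem \ref{homogeneitylemma} applies.)
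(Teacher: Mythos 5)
Your proposal is correct and matches the paper's (implicit) argument exactly: the corollary is an immediate application of Corollary \ref{finitemodificationshomogeneous}, since a semifilter is by definition closed under finite modifications. The remark reconciling the $=^\ast$ formulation with the $h_F$ formulation is accurate and harmless.
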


\begin{corollary}\label{borelsemifilterstronglyhomogeneous}
If $\Ss$ is a Borel semifilter then $\Ss$ is strongly homogeneous.
\end{corollary}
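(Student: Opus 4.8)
The plan is to deduce this from Corollary \ref{semifilterhomogeneous} together with van Engelen's classification of the homogeneous zero-dimensional Borel spaces. First I would record the easy structural facts about $\Ss$: being a subspace of $\CCC$ it is zero-dimensional; it is Borel by hypothesis; and, as observed in the Introduction, it is not locally compact. By Corollary \ref{semifilterhomogeneous} it is moreover homogeneous. Thus $\Ss$ is a homogeneous zero-dimensional Borel space which is not locally compact.

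The heart of the matter is then the fact, which follows from the classification in \cite{vanengelent}, that every homogeneous zero-dimensional Borel space which is not locally compact is strongly homogeneous. (For a homogeneous space this hypothesis coincides with being nowhere locally compact: a nonempty locally compact open subspace would, by openness, contain a compact neighbourhood in $X$ of one of its points, and homogeneity would then force $X$ itself to be locally compact.) Granting this, the corollary follows at once.

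If one instead wants a proof not invoking the full classification, the natural tool is Terada's Lemma \ref{pibase}: since $\Ss$ is not compact, it would suffice to exhibit a clopen base $\BB$ of $\Ss$ with $U\approx\Ss$ for every $U\in\BB$. Because $\Ss$ is closed under finite modifications, the maps $h_F$ with $F$ finite fix $\Ss$ setwise, so all of the basic clopen sets $[s]\cap\Ss$ with $s$ of a given length are homeomorphic to one another; as $\{[s]\cap\Ss:s\in 2^{<\omega}\}$ is a base of $\Ss$, this reduces the task to showing $[s]\cap\Ss\approx\Ss$ for one (equivalently, every) $s$ of each length. I expect this reduction to be the real obstacle. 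The semifilter axioms alone give only that $\Ss$ is homeomorphic to $2^{|s|}\times(\Ss\cap\PP(\omega\setminus|s|))$, and the semifilter $\Ss\cap\PP(\omega\setminus|s|)$ — essentially a shift of $\Ss$ — is not visibly homeomorphic to $\Ss$ once one drops the Borel hypothesis; so obtaining $[s]\cap\Ss\approx\Ss$ seems to require a genuine portion of van Engelen's analysis, which is why I would favour the first route.
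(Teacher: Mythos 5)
Your first route is exactly the paper's proof: it combines Corollary \ref{semifilterhomogeneous} with van Engelen's result (\cite[Corollary 4.4.6]{vanengelent}) that every homogeneous zero-dimensional Borel space that is not locally compact is strongly homogeneous, and your preliminary observations (zero-dimensionality, Borelness, non-local-compactness) are the right ones. Your assessment of the Terada-based alternative, including the obstacle that $[s]\cap\Ss\approx\Ss$ is not available from the semifilter axioms alone, is also accurate, which is precisely why the paper takes the classification route.
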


\begin{proof}
Simply apply Corollary \ref{semifilterhomogeneous} and \cite[Corollary 4.4.6]{vanengelent}.
\end{proof}

\section{Some concrete examples}

In this section, if $X$ is one of the notable spaces $\QQQ$, $\QQQ\times\CCC$, $\SSS$ or $\TTT$ (see below), we will exhibit a semifilter that is homeomorphic to $X$. The first two cases are given by the following trivial proposition, where the desired semifilter will actually be a filter.

\begin{proposition}\label{QandQC}
If $X=\QQQ$ or $X=\QQQ\times\CCC$ then there exists a filter $\FF$ such that $\FF\approx X$.
\end{proposition}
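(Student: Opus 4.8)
The plan is to realize both spaces as explicit Fréchet-type filters. The first ingredient I would establish is that $\Cof(\Omega)\approx\QQQ$ for every infinite set $\Omega$. Indeed, as a subspace of $2^\Omega$ this collection is zero-dimensional; it is countable because $x\mapsto\Omega\setminus x$ is a bijection from $\Cof(\Omega)$ onto $\Fin(\Omega)$; and it is dense in itself, since given $x\in\Cof(\Omega)$ and a finite $F\subseteq\Omega$, picking any $n\in\Omega\setminus F$ (which exists as $\Omega$ is infinite) the set $x\,\triangle\,\{n\}$ belongs to $\Cof(\Omega)$, is distinct from $x$, and agrees with $x$ on $F$, so $x$ is not isolated. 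By the characterization of $\QQQ$ recalled in Section 2, $\Cof(\Omega)\approx\QQQ$. Taking $\Omega=\omega$ immediately settles the case $X=\QQQ$, with the desired filter being $\FF=\Cof$.

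For $X=\QQQ\times\CCC$, I would fix a partition $\omega=\omega_0\cup\omega_1$ into two infinite sets and define
$$
\FF=\{x\subseteq\omega:x\cap\omega_0\in\Cof(\omega_0)\}.
$$
The next step is to verify that $\FF$ is a filter. We have $\varnothing\notin\FF$ because $\omega_0$ is infinite, and $\omega\in\FF$. Closure under finite modifications and upward closure follow from the corresponding properties of $\Cof(\omega_0)$, using that the operation $x\mapsto x\cap\omega_0$ preserves $=^\ast$ and $\subseteq$. Finally, $\FF$ is closed under finite intersections because the intersection of two cofinite subsets of $\omega_0$ is again cofinite in $\omega_0$.

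It remains to identify the topology of $\FF$. Under the canonical homeomorphism $2^\omega\approx 2^{\omega_0}\times 2^{\omega_1}$ that sends $x$ to $(x\cap\omega_0,\,x\cap\omega_1)$, the subspace $\FF$ corresponds exactly to $\Cof(\omega_0)\times 2^{\omega_1}$. Since $\Cof(\omega_0)\approx\QQQ$ by the first step and $2^{\omega_1}\approx\CCC$, we conclude $\FF\approx\QQQ\times\CCC$, as desired. I do not anticipate any genuine obstacle here: the proposition is essentially a bookkeeping matter, and the only points deserving a moment of care are the absence of isolated points in $\Cof(\Omega)$ and the clean description of $\FF$ as a topological product.
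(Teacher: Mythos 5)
Your proof is correct and follows essentially the same route as the paper: for $\QQQ$ you take $\FF=\Cof$, and for $\QQQ\times\CCC$ your filter $\{x\subseteq\omega:x\cap\omega_0\in\Cof(\omega_0)\}$ is exactly the paper's $\{x\subseteq\omega:\Omega_1\subseteq^\ast x\}$, identified with $\Cof(\Omega_1)\times 2^{\Omega_2}$ in the same way. The extra verifications (that $\Cof(\Omega)$ is countable and dense in itself, and that $\FF$ satisfies the filter axioms) are correct and simply spell out what the paper leaves as trivial.
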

\begin{proof}
If $X=\QQQ$, simply let $\FF=\Cof$. Now assume that $X=\QQQ\times\CCC$. Fix infinite sets $\Omega_1$ and $\Omega_2$ such that $\Omega_1\cup\Omega_2=\omega$ and $\Omega_1\cap\Omega_2=\varnothing$. Define $\FF=\{x\subseteq\omega:\Omega_1\subseteq^\ast x\}$, and observe that $\FF$ is a filter. Since
$$
\FF=\{x_1\cup x_2:x_1\in\Cof(\Omega_1)\text{ and }x_2\subseteq\Omega_2\},
$$
one sees that $\FF\approx\Cof(\Omega_1)\times 2^{\Omega_2}\approx X$.
\end{proof}

The remainder of this section will not be needed later, but it might useful for a better understanding of Section 7. Furthermore, the proofs of Propositions \ref{S} and \ref{T} yield concrete descriptions of $\SSS$ and $\TTT$ that are as nice as possible from the combinatorial point of view.

The spaces $\SSS$ and $\TTT$ were introduced respectively by van Mill (in \cite{vanmillc}) and by van Douwen (unpublished). In hindsight, they are the first non-trivial step in the classification of the homogeneous zero-dimensional spaces in $\Delta$ (see Sections 6 and 7). We will not give the original definitions of $\SSS$ and $\TTT$, but use the following characterizations instead (see \cite[Section 5]{vanmillc} and \cite[Appendix]{vanengelenvanmill} respectively).

\begin{theorem}[van Mill; van Douwen]\label{SandT}
Let $X$ be a zero-dimensional space.
\begin{itemize}
\item $X\approx\SSS$ if and only if $X$ is the union of a complete subspace and a $\sigma$-compact subspace, $X$ is nowhere $\sigma$-compact, and $X$ is nowhere the union of a complete and a countable subspace.
\item $X\approx\TTT$ if and only if $X$ is the union of a complete subspace and a countable subspace, $X$ is nowhere $\sigma$-compact, and $X$ is nowhere complete.
\end{itemize}
\end{theorem}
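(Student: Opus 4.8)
The plan is to prove the two biconditionals separately, by a common method; I will describe the argument for $\SSS$, the one for $\TTT$ being entirely parallel. The ``if'' direction is routine: starting from the original definitions of $\SSS$ (van Mill) and $\TTT$ (van Douwen), one checks directly that these spaces are unions of the stated kind, are nowhere $\sigma$-compact, and are nowhere of the smaller type, using only the classical characterizations of $\QQQ$, $\CCC$, $\PPP$ recalled above together with the observations that a complete subspace of a zero-dimensional space is $G_\delta$ in its completion while a $\sigma$-compact subspace is $F_\sigma$ there; since the three properties are topological invariants, this gives the ``if'' direction in full. The substance is the ``only if'' direction, for which I would follow van Engelen's strategy in \cite{vanengelent}: show that any zero-dimensional $X$ satisfying the hypotheses is strongly homogeneous and lies ``everywhere'' at a prescribed level of the Borel (Wadge) hierarchy, and then invoke the uniqueness of h-homogeneous zero-dimensional Borel spaces of a given type.

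So let $X$ be zero-dimensional with $X=A\cup B$, where $A$ is complete and $B$ is $\sigma$-compact, with $X$ nowhere $\sigma$-compact and nowhere the union of a complete and a countable subspace. A short argument shows $X$ is Baire: if $A$ were not dense, some non-empty clopen $U\subseteq X$ would be disjoint from $A$, hence a closed subspace of $B$, hence $\sigma$-compact, contradicting ``nowhere $\sigma$-compact''; thus $A$ is a dense complete subspace and $X$ is Baire. Next I would check that every non-empty clopen $U\subseteq X$ again satisfies all of the hypotheses --- $A\cap U$ is clopen in $A$ and hence complete, $B\cap U$ is $\sigma$-compact, and the two ``nowhere'' clauses pass to $U$ at once --- so that, once one knows each such $U$ is homeomorphic to $X$, Lemma \ref{pibase} yields strong homogeneity of $X$. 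For the complexity analysis one notes that $X$ sits in the (non-selfdual) Wadge class $\Gamma$ of sets of the form ``$G_\delta$ union $F_\sigma$'', a class inherited by clopen subspaces, and that the two ``nowhere'' hypotheses are precisely what is needed to prevent any non-empty clopen subspace of $X$ from lying in a strictly smaller Wadge class: they forbid a clopen piece from being $\sigma$-compact (i.e. $\mathbf{\Sigma}^0_2$) or from being the union of a complete and a countable subspace, which are the relevant ways the complexity could drop. Hence $X$ is ``properly of class $\Gamma$ everywhere'' in van Engelen's sense, and his classification then forces $X\approx\SSS$, since there is a unique h-homogeneous zero-dimensional Borel space of this type. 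For $\TTT$ one runs the same argument with ``$G_\delta$ union countable'' as the ambient class, the Baire step additionally using that every countable space is $\sigma$-compact.

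I expect the main obstacle to be the step that upgrades the three ``local'' hypotheses to strong homogeneity, equivalently that shows every non-empty clopen subspace of $X$ is homeomorphic to $X$ itself. The ``nowhere'' clauses are tailored so that no clopen piece can slip into a strictly lower complexity class, but turning this into an actual homeomorphism seems to require either the full machinery of van Engelen's classification of homogeneous zero-dimensional Borel spaces, or, for a self-contained treatment, a delicate back-and-forth (Knaster--Reichbach type) construction over clopen partitions that keeps the descriptive complexity pinned at level $\Gamma$ through both halves of the induction. A secondary but genuine technical point is the accompanying Wadge-lattice bookkeeping: one must identify exactly which classes lie immediately below $\Gamma$ and confirm that avoiding them is equivalent to the stated ``nowhere'' conditions for $\SSS$, and to the analogous ones for $\TTT$.
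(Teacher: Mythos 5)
First, a point of reference: the paper contains \emph{no proof} of Theorem \ref{SandT}. It is quoted as a known result, with the characterization of $\SSS$ attributed to van Mill \cite[Section 5]{vanmillc} and that of $\TTT$ to the appendix of \cite{vanengelenvanmill}, so your attempt has to be measured against those sources rather than against anything in the text. Your easy direction (that $\SSS$ and $\TTT$ themselves have the listed properties; note that you have the labels ``if'' and ``only if'' swapped throughout) is fine. For the substantial direction, reducing to van Engelen's classification is a legitimate plan --- it is essentially how the framework of Sections 6--7 of this paper would rederive the theorem --- but as you have set it up it is both circular and unnecessary at the key step. You propose to first prove that every non-empty clopen $U\subseteq X$ is homeomorphic to $X$ and then invoke Lemma \ref{pibase}; but ``every clopen piece is homeomorphic to $X$'' is, for all practical purposes, the uniqueness statement you are trying to prove, and you concede you cannot obtain it without the full classification. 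The cleaner route is to target Theorems \ref{belowdelta1} and \ref{belowdeltabelongs} with $n=2$ directly: membership in $\XX^1_2$ and $\XX^2_2$ is decided purely by the properties ``$\PP^{(i)}_2$ and nowhere $\PP^{(j)}_m$ for all smaller $\PP^{(j)}_m$'', with no homogeneity hypothesis, and each class contains exactly one space up to homeomorphism. Strong homogeneity is then an output, not an input.

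With that rerouting, three genuine gaps remain, and your sketch leaves all of them open. (1) The conditions listed in Theorem \ref{SandT} are a \emph{proper subset} of the ``nowhere'' clauses demanded by Theorem \ref{belowdeltabelongs}: nowhere countable and nowhere complete are immediate, but nowhere $\PP_1$ (no non-empty clopen piece in $\Diff_2(\mathbf{\Sigma}^0_2)$) and, for $\SSS$, nowhere $\PP^1_3$ do not follow formally from ``nowhere $\sigma$-compact'' and ``nowhere complete$\,\cup\,$countable''. One must argue, say, that a clopen $U$ lying in $\Diff_2(\mathbf{\Sigma}^0_2)$ while also being a union of a complete and a $\sigma$-compact subspace (hence in the dual class) falls into the ambiguous class, then use the Hausdorff--Kuratowski/Wadge analysis of that class together with the fact that $U$ is Baire to produce a non-empty relatively open subset that is $\sigma$-compact or complete, contradicting the hypotheses. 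This is exactly the ``bookkeeping'' you defer, and it is a real argument, not a formality. (2) One must still identify $\SSS$ and $\TTT$, as originally defined, with the unique elements of $\XX^2_2$ and $\XX^1_2$; this is the content of \cite[Section 5]{vanmillc} and \cite[Appendix]{vanengelenvanmill} and cannot be recovered from complexity considerations alone. (3) The uniqueness itself --- the Knaster--Reichbach back-and-forth over clopen covers, which is how the original sources actually prove these characterizations --- is entirely deferred. Until (1) is written out and (2)--(3) are either proved or explicitly cited, what you have is a plausible plan rather than a proof.
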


\begin{proposition}\label{S}
There exists a semifilter $\Ss$ such that $\Ss\approx\SSS$.
\end{proposition}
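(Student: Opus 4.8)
The plan is to write down an explicit semifilter $\Ss$ and to verify that it satisfies the three conditions characterizing $\SSS$ in Theorem~\ref{SandT}. Fix a partition $\omega=E\sqcup F$ into two infinite sets, a partition $E=\bigsqcup_{n\in\omega}P_n$ into infinitely many infinite sets, and a partition $F=\bigsqcup_{n\in\omega}Q_n$ into infinitely many infinite sets, and let
$$
\Ss=\{x\subseteq\omega:P_n\subseteq^\ast x\text{ for some }n\}\cup\{x\subseteq\omega:x\cap Q_n\neq\varnothing\text{ for infinitely many }n\}=:\KK\cup\GG.
$$
First I would check that $\KK$ and $\GG$ are semifilters --- each is visibly upward-closed, closed under finite modifications, contains $\omega$, and omits $\varnothing$ since every $P_n$ and every $Q_n$ is infinite --- so that $\Ss$, being a union of semifilters, is a semifilter. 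Moreover $\KK$ is a countable union of the compact sets $\{x:P_n\setminus s\subseteq x\}$ (for $n\in\omega$ and $s\in[P_n]^{<\omega}$), so $\KK$ is $\sigma$-compact, and $\GG=\bigcap_m\bigcup_{n\geq m}\{x:x\cap Q_n\neq\varnothing\}$ is $G_\delta$ in $\CCC$, so $\GG$ is complete. In particular $\Ss$ is Borel, and it is zero-dimensional as a subspace of $\CCC$. This already gives the first condition of Theorem~\ref{SandT}: $\Ss$ is the union of a complete and a $\sigma$-compact subspace.

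By Corollary~\ref{borelsemifilterstronglyhomogeneous} the Borel semifilter $\Ss$ is strongly homogeneous. Since $\sigma$-compactness, as well as the property of being the union of a complete and a countable subspace, is inherited by closed subspaces, and since every nonempty open subspace of the zero-dimensional space $\Ss$ contains a nonempty clopen subspace --- which by strong homogeneity is homeomorphic to $\Ss$ --- it is enough to prove two global statements: (a) $\Ss$ is not $\sigma$-compact, which then forces $\Ss$ to be nowhere $\sigma$-compact; and (b) $\Ss$ is not the union of a complete and a countable subspace, which then forces $\Ss$ to be nowhere such a union. These are the two remaining conditions of Theorem~\ref{SandT}.

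For (a): the set $Z_1=\Ss\cap\{x:x\cap E=\varnothing\}$ is closed in $\Ss$, and since $x\cap E=\varnothing$ forces $P_n\not\subseteq^\ast x$ for every $n$, when $Z_1$ is viewed inside $2^F$ it equals $\{y\subseteq F:y\cap Q_n\neq\varnothing\text{ for infinitely many }n\}$. This space is complete (again $G_\delta$), zero-dimensional, and nowhere compact --- inside every basic clopen subset of $2^F$ it is a proper dense subset, since it contains $\Cof(F)$ but contains no finite set --- so it is homeomorphic to $\PPP$, which is not $\sigma$-compact; hence neither is $\Ss$. For (b): the set $Z_2=\Ss\cap\{x:x\cap F=\varnothing\}$ is closed in $\Ss$, and since $x\cap F=\varnothing$ forces $x\cap Q_n=\varnothing$ for every $n$, when $Z_2$ is viewed inside $2^E$ it equals $Y=\{y\subseteq E:P_n\subseteq^\ast y\text{ for some }n\}$. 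As with $\KK$, $Y$ is $\sigma$-compact, and the same \emph{proper dense subset} observation shows $Y$ is nowhere locally compact, hence first category; moreover every nonempty open subset of $Y$ is uncountable, because every basic clopen subset of $2^E$ contains a copy of $\CCC$ lying inside $Y$ (for large $n$, the set of $y$ in the basic clopen set with $P_n\subseteq y$). Now I claim that no such $Y$ is the union of a complete subspace $C$ and a countable subspace $D$: if it were, then $C$ would be dense in $Y$ (otherwise $Y$ would contain a nonempty countable open subspace), so $C$ would be a dense --- hence Baire --- subspace of the first category space $Y$; a routine Baire-category argument then shows that $C$ is itself first category, contradicting that $C$ is a nonempty Baire space ($C\neq\varnothing$ since $Y$ is uncountable). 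Since $Z_2$ is a closed copy of $Y$ inside $\Ss$, it follows that $\Ss$ is not the union of a complete and a countable subspace.

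With (a), (b), the first condition, and zero-dimensionality in hand, Theorem~\ref{SandT} yields $\Ss\approx\SSS$. I expect step (b) to be the main obstacle: one must find inside $\Ss$ a genuinely \emph{closed} copy of a $\sigma$-compact space that is not the union of a complete and a countable subspace. This is exactly why $\omega$ is split into the two regions $E$ and $F$ and $E$ is then refined: on the $E$-side, where $\GG$ disappears, what remains is not a mere copy of $\QQQ$ but a dense-in-itself, nowhere countable $\sigma$-compact space.
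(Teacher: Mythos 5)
Your proof is correct and follows essentially the same strategy as the paper's: write down an explicit Borel semifilter that is visibly the union of a complete and a $\sigma$-compact subspace, invoke Corollary~\ref{borelsemifilterstronglyhomogeneous} to reduce the two ``nowhere'' clauses of Theorem~\ref{SandT} to their global versions, and exhibit closed subspaces witnessing the two negative conditions. The only differences are cosmetic: the paper's semifilter is $\{x:x\cap\Omega_1\notin\Fin(\Omega_1)\text{ or }\Omega\subseteq^\ast x\}$ and its witness for ``not the union of a complete and a countable subspace'' is a closed copy of $\QQQ\times\CCC$, whereas you use a slightly more elaborate partition and verify the corresponding property of your witness $Y$ directly by a (correct) Baire-category argument rather than by identifying it with a known space.
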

\begin{proof}
Fix infinite sets $\Omega_1$ and $\Omega_2$ such that $\Omega_1\cup\Omega_2=\omega$ and $\Omega_1\cap\Omega_2=\varnothing$. Also fix an infinite $\Omega\subseteq\Omega_2$ such that $\Omega_2\setminus\Omega$ is infinite. Define
$$
\Ss=\{x_1\cup x_2:x_1\subseteq\Omega_1\text{, }x_2\subseteq\Omega_2\text{, and }(x_1\notin\Fin(\Omega_1)\text{ or }\Omega\subseteq^\ast x_2)\},
$$
and observe that $\Ss$ is a semifilter. Furthermore, it is clear that $\Ss$ is the union of its complete subspace $\{x\subseteq\omega:x\cap\Omega_1\notin\Fin(\Omega_1)\}$ and its $\sigma$-compact subspace $\{x\subseteq\omega:\Omega\subseteq^\ast x\}$ (see the proof of Proposition \ref{QandQC}). It follows that $\Ss$ is Borel, hence it is strongly homogeneous by Corollary \ref{borelsemifilterstronglyhomogeneous}.

Therefore, in order to apply Theorem \ref{SandT}, it will be enough to show that $\Ss$ is neither $\sigma$-compact nor the union of a complete and a countable subspace. To see that $\Ss$ is not $\sigma$-compact, simply observe that $\PP(\Omega_1)\setminus\Fin(\Omega_1)$ is a closed subspace of $\Ss$ that is homeomorphic to $\PPP$. To see that $\Ss$ is not the union of a complete and a countable subspace, simply observe that $\{x_2\subseteq\Omega_2:\Omega\subseteq^\ast x_2\}$ is a closed subspace of $\Ss$ that is homeomorphic to $\QQQ\times\CCC$.
\end{proof}

\begin{proposition}\label{T}
There exists a semifilter $\TT$ such that $\TT\approx\TTT$.
\end{proposition}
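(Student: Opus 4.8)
The plan is to imitate the proof of Proposition \ref{S} as closely as possible, adjusting the combinatorics so that the $\sigma$-compact part is replaced by a countable part. Following the characterization of $\TTT$ in Theorem \ref{SandT}, I need a semifilter that is the union of a complete subspace and a countable subspace, is nowhere $\sigma$-compact, and is nowhere complete. First I would fix disjoint infinite sets $\Omega_1,\Omega_2$ with $\Omega_1\cup\Omega_2=\omega$, and define
$$
\TT=\{x_1\cup x_2:x_1\subseteq\Omega_1\text{, }x_2\subseteq\Omega_2\text{, and }(x_1\notin\Fin(\Omega_1)\text{ or }x_2\in\Cof(\Omega_2))\}.
$$
One checks directly that $\TT$ is upward-closed, closed under finite modifications, omits $\varnothing$ and contains $\omega$, so it is a semifilter. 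It is the union of its complete subspace $A=\{x\subseteq\omega:x\cap\Omega_1\notin\Fin(\Omega_1)\}$ (which is $\mathbf{\Pi}^0_2$, indeed $\PP(\Omega_1)\setminus\Fin(\Omega_1)$ is complete, and $A$ is homeomorphic to the product with $2^{\Omega_2}$) and its countable subspace $B=\{x_1\cup x_2:x_1\in\Fin(\Omega_1),\ x_2\in\Cof(\Omega_2)\}$, which is countable since both $\Fin(\Omega_1)$ and $\Cof(\Omega_2)$ are countable. Hence $\TT$ is Borel, so by Corollary \ref{borelsemifilterstronglyhomogeneous} it is strongly homogeneous; in particular every non-empty clopen subspace of $\TT$ is homeomorphic to $\TT$ itself.

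Given strong homogeneity, it suffices to verify the remaining two ``nowhere'' conditions globally rather than locally: I need only show that $\TT$ is not $\sigma$-compact and not complete, and the ``nowhere'' versions follow automatically. That $\TT$ is not $\sigma$-compact is witnessed exactly as in Proposition \ref{S}: $\PP(\Omega_1)\setminus\Fin(\Omega_1)$ embeds as a closed subspace of $\TT$ and is homeomorphic to $\PPP$, which is not $\sigma$-compact. For the claim that $\TT$ is not complete, I would exhibit a closed subspace homeomorphic to $\QQQ$: the set $\{x_1\cup\Omega_2:x_1\in\Fin(\Omega_1)\}$ (equivalently, a suitable countable dense-in-itself closed subspace sitting inside the ``boundary'' between $A$ and $B$). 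One checks it is closed in $\TT$, countable, and dense in itself, hence homeomorphic to $\QQQ$ by the characterization of $\QQQ$; since $\QQQ$ is not complete and completeness is closed-hereditary, $\TT$ is not complete.

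The main obstacle I anticipate is getting the combinatorial definition exactly right so that all three topological conditions hold simultaneously while $\TT$ remains a genuine semifilter — in particular, checking that the countable closed subspace I use to defeat completeness is really dense in itself and closed in $\TT$, and that the copy of $\PPP$ used to defeat $\sigma$-compactness really is closed. A secondary point requiring a little care is confirming that $A$ as defined is genuinely complete (i.e.\ $\mathbf{\Pi}^0_2$ in $\CCC$): this reduces to the fact that $\PP(\Omega_1)\setminus\Fin(\Omega_1)$ is complete and that taking a product with the compact space $2^{\Omega_2}$ preserves this, together with the observation that $A$ is exactly this product under the natural identification of $\CCC$ with $2^{\Omega_1}\times 2^{\Omega_2}$. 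Once the definition is pinned down, the rest is routine given Theorem \ref{SandT} and Corollary \ref{borelsemifilterstronglyhomogeneous}.
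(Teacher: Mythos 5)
Your construction of $\TT$, the decomposition into the complete piece $A$ and the countable piece $B$, the appeal to Corollary \ref{borelsemifilterstronglyhomogeneous} to reduce the ``nowhere'' conditions to their global versions, and the closed copy of $\PPP$ witnessing non-$\sigma$-compactness are all exactly as in the paper. The one concrete problem is your witness for non-completeness: the set $\{x_1\cup\Omega_2:x_1\in\Fin(\Omega_1)\}$ is \emph{not} closed in $\TT$. Its closure in $\CCC$ is $\{x_1\cup\Omega_2:x_1\subseteq\Omega_1\}$, and every point of that closure lies in $\TT$ (for infinite $x_1$ because $x_1\notin\Fin(\Omega_1)$, and in any case because $x_2=\Omega_2\in\Cof(\Omega_2)$); so a sequence of finite $x_1$'s converging to an infinite subset of $\Omega_1$ produces a limit point in $\TT$ outside your set. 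Hence the step ``one checks it is closed in $\TT$'' fails for this choice, and completeness being closed-hereditary gives you nothing.

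The fix is the one the paper uses: take $\Cof(\Omega_2)$ itself, viewed as the family of subsets of $\omega$ contained in and cofinite in $\Omega_2$. Its closure in $\CCC$ is $\PP(\Omega_2)$, and $\PP(\Omega_2)\cap\TT=\Cof(\Omega_2)$ because a point $x\subseteq\Omega_2$ has $x_1=\varnothing\in\Fin(\Omega_1)$, so membership in $\TT$ forces $x\in\Cof(\Omega_2)$. Thus $\Cof(\Omega_2)$ is a closed, countable, dense-in-itself subspace of $\TT$, homeomorphic to $\QQQ$, and $\TT$ is not complete. With that substitution your argument coincides with the paper's proof.
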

\begin{proof}
Fix infinite sets $\Omega_1$ and $\Omega_2$ such that $\Omega_1\cup\Omega_2=\omega$ and $\Omega_1\cap\Omega_2=\varnothing$. Define
$$
\TT=\{x_1\cup x_2:x_1\subseteq\Omega_1\text{, }x_2\subseteq\Omega_2\text{, and }(x_1\notin\Fin(\Omega_1)\text{ or }x_2\in\Cof(\Omega_2))\},
$$
and observe that $\TT$ is a semifilter. Furthermore, it is clear that $\TT$ is the union of its complete subspace $\{x\subseteq\omega:x\cap\Omega_1\notin\Fin(\Omega_1)\}$ and its countable subspace $\{x_1\cup x_2:x_1\in\Fin(\Omega_1)\text{ and }x_2\in\Cof(\Omega_2)\}$. It follows that $\TT$ is Borel, hence it is strongly homogeneous by Corollary \ref{borelsemifilterstronglyhomogeneous}.

Therefore, in order to apply Theorem \ref{SandT}, it will be enough to show that $\TT$ is neither $\sigma$-compact nor complete. To see that $\TT$ is not $\sigma$-compact, simply observe that $\PP(\Omega_1)\setminus\Fin(\Omega_1)$ is a closed subspace of $\TT$ that is homeomorphic to $\PPP$. To see that $\TT$ is not complete, simply observe that $\Cof(\Omega_2)$ is a closed subspace of $\TT$ that is homeomorphic to $\QQQ$.
\end{proof}

\section{Preliminaries on Wadge classes}

The remaining part of this article relies heavily on results and techniques of van Engelen from \cite{vanengelent} and \cite{vanengeleni}.
In particular, having a copy of \cite{vanengelent} available will be indispensable for the reading of this article.
For this reason, we have tried to follow the notation and terminology of \cite{vanengelent} as closely as possible.

Given a set $Z$ and a collection $\mathbf{\Gamma}\subseteq\PP(Z)$, let $\check{\mathbf{\Gamma}}=\{Z\setminus X:X\in\mathbf{\Gamma}\}$ and $\Delta(\mathbf{\Gamma})=\mathbf{\Gamma}\cap\check{\mathbf{\Gamma}}$ (what $Z$ is will be clear from the context).
The following is \cite[Definition 3.1.1]{vanengelent} (see also \cite[Section 22.E]{kechris}).

\begin{definition}
Let $Z$ be a space, $\eta<\omega_1$ and $1\leq\xi<\omega_1$.
\begin{itemize}
\item Given an increasing sequence of sets $\langle A_\zeta:\zeta<\eta\rangle$, define
$$
\left.
\begin{array}{lcl}
& & \Diff(\langle A_\zeta:\zeta<\eta\rangle)= \left\{
\begin{array}{ll}
\bigcup\{A_\zeta\setminus\bigcup_{\beta<\zeta}A_\beta:\zeta<\eta\text{ and }\zeta\text{ is odd}\} & \text{if }\eta\text{ is even,}\\
\bigcup\{A_\zeta\setminus\bigcup_{\beta<\zeta}A_\beta:\zeta<\eta\text{ and }\zeta\text{ is even}\} & \text{if }\eta\text{ is odd.}
\end{array}
\right.
\end{array}
\right.
$$
\item $X\in \Diff_\eta^Z(\mathbf{\Sigma}^0_\xi)$ if and only if there exists an increasing sequence $\langle A_\zeta:\zeta<\eta\rangle$ of $\mathbf{\Sigma}^0_\xi$ subsets of $Z$ such that $X=\Diff(\langle A_\zeta:\zeta<\eta\rangle)$.
\item $X\in \Diff_\eta(\mathbf{\Sigma}^0_\xi)$ if and only if $Y\in \Diff_\eta^Z(\mathbf{\Sigma}^0_\xi)$ whenever $Z$ is space and $Y$ is a subspace of $Z$ such that $X\approx Y$. The elements of this class are the \emph{absolutely $\Diff_\eta(\mathbf{\Sigma}^0_\xi)$} spaces. Similarly define $\check{\Diff}_\eta(\mathbf{\Sigma}^0_\xi)$.
\item $\Delta=\Diff_\omega(\mathbf{\Sigma}^0_2)\cap\check{\Diff}_\omega(\mathbf{\Sigma}^0_2)$.
\end{itemize}
\end{definition}
For example $\Diff_0(\mathbf{\Sigma}^0_\xi)=\{\varnothing\}$, $\Diff_1(\mathbf{\Sigma}^0_2)$ consists of the $\sigma$-compact spaces, and $\check{\Diff}_1(\mathbf{\Sigma}^0_2)$ consists of the complete spaces. One can easily check that
$$
\Diff_\eta(\mathbf{\Sigma}^0_\xi)\cup\check{\Diff}_\eta(\mathbf{\Sigma}^0_\xi)\subseteq\Diff_\mu(\mathbf{\Sigma}^0_\xi)
$$
whenever $1\leq\xi<\omega_1$ and $\eta<\mu<\omega_1$.

Given a zero-dimensional space $X$, $\eta<\omega_1$, and $2\leq\xi<\omega_1$, using Lavrentiev's Theorem (see \cite[Theorem 3.9]{kechris}), it is not hard to see that $X\in\Diff_\eta(\mathbf{\Sigma}^0_\xi)$ if and only if $X\approx Y$ for some $Y\in \Diff_\eta^{\CCC}(\mathbf{\Sigma}^0_\xi)$.
In particular, a zero-dimensional space $X$ is Borel if and only if $X\approx Y$ for some Borel subspace $Y$ of $\CCC$.
Given a subspace $X$ of $\CCC$, notice that $X\in\Delta$ if and only if $X\in\Delta(\Diff^{\CCC}_\omega(\mathbf{\Sigma}^0_2))$.

\begin{definition}
Assume that $\mathbf{\Gamma}\subseteq\PP(\CCC)$ and $1\leq\xi<\omega_1$. Define $\SU(\mathbf{\Gamma},\mathbf{\Sigma}^0_\xi)$ as the collection of all sets in the form $\bigcup_{n\in\omega}(X_n\cap W_n)$, where the $W_n$ are pairwise disjoint $\mathbf{\Sigma}^0_\xi$ subsets of $\CCC$ and each $X_n\in\mathbf{\Gamma}$.
\end{definition}

Wadge reduction is a fundamental tool in \cite{vanengelent} and \cite{vanengeleni}, as well as in this article. Here, we limit ourselves to the most basic definitions (see \cite[Sections 4.1 and 4.2]{vanengelent} for a more comprehensive treatment).
Given $A,B\subseteq\CCC$, we will write $A\leq_W B$ to mean that there exists a continuous function $f:\CCC\longrightarrow\CCC$
such that $A=f^{-1}[B]$. In this case, we will say that \emph{$A$ is Wadge-reducible to $B$}. Given $X\subseteq\CCC$, let $[X]=\{A\subseteq\CCC: A\leq_W X\}$.
We will say that $\mathbf{\Gamma}$ is a \emph{Wadge class} if there exists $X\subseteq\CCC$, such that $\mathbf{\Gamma}=[X]$. In this case, we will say that \emph{$X$ generates $\mathbf{\Gamma}$}.
A \emph{Borel Wadge class} is a Wadge class that consists only of Borel sets. A Wadge class is \emph{self-dual} if $\check{\mathbf{\Gamma}}=\mathbf{\Gamma}$.

The following is the fundamental result in the theory of Borel Wadge classes (see \cite[Theorem 21.14]{kechris}).
\begin{lemma}[Wadge]\label{wadge}
Let $X$ and $Y$ be Borel subsets of $\CCC$. Then either $X\leq_W Y$ or $Y\leq_W \CCC\setminus Y$.
\end{lemma}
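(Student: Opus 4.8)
The statement is the Wadge Semi-Linear Ordering principle, and the plan is to obtain it from Borel determinacy by analysing the \emph{Wadge game}. Given Borel $X,Y\subseteq\CCC$, I would consider the game $G(X,Y)$ in which players I and II alternately play bits, I moving first: over $\omega$ rounds I produces $a\in\CCC$ and II produces $b\in\CCC$, and II is declared the winner of a run if and only if $a\in X\Leftrightarrow b\in Y$. Since $X$ and $Y$ are Borel, the set of runs won by II is a Borel subset of the space of all runs (which is homeomorphic to $\CCC$ via interleaving), so $G(X,Y)$ is determined. This is the only genuinely heavy input, namely Martin's Borel determinacy theorem; the rest is bookkeeping, and using this bit-version of the game on $\CCC$ directly avoids any issue with ``pass'' moves, because the strategies below automatically induce continuous (indeed non-expansive) maps.

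First I would treat the case where II has a winning strategy $\tau$. Since II's move $b(k)$ depends only on $a\restriction(k+1)$, the map $f:\CCC\longrightarrow\CCC$ sending $a$ to the reply $b$ that $\tau$ produces against $a$ is continuous. That $\tau$ is winning says precisely that $a\in X\Leftrightarrow f(a)\in Y$ for every $a\in\CCC$, that is, $X=f^{-1}[Y]$, so $X\leq_W Y$.

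Next I would treat the case where I has a winning strategy $\sigma$. Now I moves first, so against a full play $b$ of II the strategy $\sigma$ produces $a=\sigma\ast b$ with $a(k)$ depending only on $b\restriction k$; hence $g:\CCC\longrightarrow\CCC$, $b\mapsto a$, is continuous. That $\sigma$ is winning says that $\neg(a\in X\Leftrightarrow b\in Y)$ for every $b$, i.e. $b\in Y\Leftrightarrow a\notin X\Leftrightarrow a\in\CCC\setminus X$, so $Y=g^{-1}[\CCC\setminus X]$ and $Y\leq_W\CCC\setminus X$. Combining the two cases yields the dichotomy.

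The main obstacle is thus the appeal to Borel determinacy, which is a deep theorem and which we would simply quote (this is exactly why the statement is restricted to Borel sets); alternatively, one may just cite \cite[Theorem 21.14]{kechris} directly, the sketch above being the self-contained route. A secondary point to be careful about is the verification that a winning strategy genuinely yields a \emph{total} continuous reduction defined on all of $\CCC$, which is immediate here since in the bit-game every infinite run is legal and both players are forced to produce an element of $\CCC$.
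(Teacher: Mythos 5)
Your proof is correct and is exactly the standard game-theoretic argument behind the citation the paper gives for this lemma (\cite[Theorem 21.14]{kechris}): determinacy of the Lipschitz/Wadge game on $\CCC$ (the only heavy input being Borel determinacy), with a winning strategy for II yielding $X\leq_W Y$ and a winning strategy for I yielding $Y\leq_W\CCC\setminus X$. Note that the lemma as printed contains a typo --- the second alternative should read $Y\leq_W\CCC\setminus X$, which is the version you prove and the one the paper actually uses later (e.g.\ in the proof of Lemma~\ref{closedwadge}).
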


Following the results of Louveau from \cite{louveau}, van Engelen defines a certain subset $D$ of $\omega_1^\omega$,
then he associates a Wadge class $\mathbf{\Gamma}_u$ to each $u\in D$ so that the following fundamental theorem holds.
For every $u\in D$, he also defines the \emph{type of $u$} as a suitable $t(u)\in\{0,1,2,3\}$ (see \cite[Definition 4.2.3]{vanengelent}).
We do not think it would be particularly useful or enlightening to give all the details here.
We will only mention that if $u=\xi^\frown 1^\frown\eta^\frown\underline{0}$, where $1\leq\xi,\eta<\omega_1$, then $u\in D$ and $\mathbf{\Gamma}_u=\Diff^{\CCC}_\eta(\mathbf{\Sigma}^0_\xi)$. Here $\underline{0}$ denotes the element of $\omega_1^\omega$ which is constant with value $0$.

\begin{theorem}[Louveau for $\PPP$; van Engelen for $\CCC$]\label{louveautheorem}
The collection of non-self-dual Borel Wadge classes is precisely
$$
\{\mathbf{\Gamma}_u:u\in D\}\cup\{\check{\mathbf{\Gamma}}_u:u\in D\}.
$$
\end{theorem}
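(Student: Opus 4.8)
We do not attempt a self-contained argument here: this is one of the central structural theorems of descriptive set theory, and its proof occupies a substantial part of \cite{louveau} and of \cite[Chapter 4]{vanengelent}. The plan is only to indicate the two ingredients and where the genuine work lies. Throughout one uses Borel determinacy: by the Martin--Monk argument the Wadge preordering $\leq_W$ on Borel subsets of $\CCC$ is well-founded, and by Lemma \ref{wadge} it is semi-linearly ordered. Consequently the non-self-dual Borel Wadge classes, ordered by inclusion, form a well-ordered chain in which consecutive members are the two ``sides'' $\mathbf{\Gamma},\check{\mathbf{\Gamma}}$ of the successive levels, with the self-dual classes $\Delta(\mathbf{\Gamma})$ interleaved in between. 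Hence it suffices to exhibit, level by level, an explicit generator for each non-self-dual class and to check that the catalogue $\{\mathbf{\Gamma}_u:u\in D\}\cup\{\check{\mathbf{\Gamma}}_u:u\in D\}$ is exactly this chain.

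The heart of the matter is Louveau's theorem for $\PPP$ (equivalently, for an arbitrary zero-dimensional Polish space): a transfinite induction on Wadge rank that describes each level by a ``normal form''. The base non-self-dual classes are the $\mathbf{\Sigma}^0_\xi$ and $\mathbf{\Pi}^0_\xi$, and new classes are manufactured from old ones by the difference-hierarchy operation $\langle A_\zeta\rangle\mapsto\Diff_\eta(\langle A_\zeta\rangle)$ and by the separated-union operation $\SU(\mathbf{\Gamma},\mathbf{\Sigma}^0_\xi)$ defined above; building on Louveau--Saint~Raymond, and on Steel and Van~Wesep for the ``successor by a single bit'' step, Louveau shows that every Borel non-self-dual class is reached this way and only this way. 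The bookkeeping for all the iterated applications of these operations is precisely what the ordinal codes $u\in D$ encode, while the type $t(u)\in\{0,1,2,3\}$ records the local shape of the corresponding class. Checking that each $\mathbf{\Gamma}_u$ is genuinely a non-self-dual Wadge class, and that the list omits nothing, is the main obstacle, and it is where essentially all the effort of \cite{louveau} is concentrated.

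What remains is van Engelen's contribution: transferring the classification from $\PPP$ to $\CCC$. The point is that the lattice of Borel Wadge classes is insensitive to whether reductions are taken in $\PPP$ or in $\CCC$, since $\CCC$ is a retract of $\PPP$ (via $x\mapsto\langle\min(x(n),1):n\in\omega\rangle$) while $\PPP$ embeds into $\CCC$ as a $\mathbf{\Pi}^0_2$ subset; composing with these maps shows that for Borel sets the relation $A\leq_W B$ is unaffected, and a Borel subset of $\CCC$ generating a class can be traded for a Borel subset of $\PPP$ generating the same class, and vice versa. Chasing this equivalence through all the operations above shows that Louveau's list transcribes verbatim into the $\CCC$-setting, with $\mathbf{\Gamma}_u=\Diff^{\CCC}_\eta(\mathbf{\Sigma}^0_\xi)$ in the representative case $u=\xi^\frown 1^\frown\eta^\frown\underline{0}$. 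The only delicacy in this last step is keeping track of the ambient space in the definition of ``absolutely $\Diff_\eta(\mathbf{\Sigma}^0_\xi)$'', but Lavrentiev's theorem --- exactly as already invoked above for $\Diff_\eta(\mathbf{\Sigma}^0_\xi)$ with $\xi\geq 2$ --- renders this harmless, and the theorem follows.
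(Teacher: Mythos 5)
Your proposal and the paper's proof are both, in the end, appeals to the literature, but they point at different things, and yours gives the least support to the one step that actually requires an argument. The paper's proof is a precise reduction: van Engelen's \cite[Theorem 4.2.7]{vanengelent} already shows that every non-self-dual Borel Wadge class of $\CCC$ occurs in the list $\{\mathbf{\Gamma}_u:u\in D\}\cup\{\check{\mathbf{\Gamma}}_u:u\in D\}$, so all that remains is the converse inclusion, namely that each $\mathbf{\Gamma}_u$ is itself non-self-dual; for this the paper points to the argument of \cite[Proposition 1.3]{louveau}, or alternatively to items 4 and 1 on page 90 of \cite{louveausaintraymond}. You do name the non-self-duality of the $\mathbf{\Gamma}_u$ as ``the main obstacle'', so the idea is present, but it is folded into a survey of the entire classification programme rather than isolated as the single verification still owed once van Engelen's theorem is invoked.

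The genuine gap is your treatment of the transfer from $\PPP$ to $\CCC$. The claim that the Borel Wadge lattice is ``insensitive'' to the ambient space because $\CCC$ is a retract of $\PPP$ while $\PPP$ embeds as a $\mathbf{\Pi}^0_2$ subset of $\CCC$ does not constitute a proof: a Wadge reduction computed in $\PPP$ between two subsets of $\CCC$ need not restrict or project to one computed in $\CCC$ (composing with the retraction changes the sets being reduced, not just the reducing map), and conversely continuous self-maps of $\CCC$ are far more constrained than those of $\PPP$, so it is not automatic that the same catalogue of classes, with the same self-duality behaviour, appears on both sides. Establishing that Louveau's description ``transcribes verbatim'' to $\CCC$ is precisely the content of van Engelen's Section 4.2, culminating in Theorem 4.2.7, and it is obtained there by redoing the inductive analysis of the classes $\mathbf{\Gamma}_u$ inside $\CCC$, not by a retraction argument; Lavrentiev's theorem is relevant to the absoluteness of the classes $\Diff_\eta(\mathbf{\Sigma}^0_\xi)$ among ambient complete spaces, but it does not by itself yield the Wadge-theoretic transfer. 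Replacing that paragraph by a citation of \cite[Theorem 4.2.7]{vanengelent}, and then supplying (or citing) the non-self-duality of each $\mathbf{\Gamma}_u$, recovers the paper's proof.
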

\begin{proof}
By \cite[Theorem 4.2.7]{vanengelent}, it will be enough to show that $\mathbf{\Gamma}_u$ is non-self-dual for each $u\in D$,
and this can be done as in the proof of \cite[Proposition 1.3]{louveau}. The desired result also follows from items 4 and 1 on \cite[page 90]{louveausaintraymond}.
\end{proof}

\section{The classification of homogeneous spaces: the case below $\Delta$}

This section is a minimalist introduction to the classification of the homogeneous zero-dimensional (Borel) spaces that are in $\Delta$ and not locally compact.

The following is essentially \cite[Definition 3.1.7]{vanengelent} (see also \cite[Lemma 3.1.4]{vanengelent}). It is included for completeness, since we will only need the obvious fact that if $X\in \Diff_\ell(\mathbf{\Sigma}^0_2)$ for some $\ell\in\omega$ then $X$ has one of the following properties.

\begin{definition}[van Engelen]
Let $X$ be a space, and let $k\in\omega$.
\begin{itemize}
\item $X$ is $\PP_{4k}$ if and only if $X$ is the union of a subspace in $\Diff_{2k}(\mathbf{\Sigma}^0_2)$ and a complete subspace.
\item $X$ is $\PP_{4k+1}$ if and only if $X\in \Diff_{2(k+1)}(\mathbf{\Sigma}^0_2)$.
\item $X$ is $\PP^1_{4k+2}$ if and only if $X$ is the union of a subspace in $\Diff_{2k}(\mathbf{\Sigma}^0_2)$, a complete subspace, and a countable subspace.
\item $X$ is $\PP^1_{4k+3}$ if and only if $X$ is the union of a subspace in $\Diff_{2(k+1)}(\mathbf{\Sigma}^0_2)$ and a countable subspace.
\item $X$ is $\PP^2_{4k+2}$ if and only if $X$ is the union of a subspace in $\Diff_{2k}(\mathbf{\Sigma}^0_2)$, a complete subspace, and a $\sigma$-compact subspace.
\item $X$ is $\PP^2_{4k+3}$ if and only if $X$ is the union of a subspace in $\Diff_{2(k+1)}(\mathbf{\Sigma}^0_2)$ and a $\sigma$-compact subspace.
\end{itemize}
Also define $\PP^1_{-1}$ as the property of being countable, and $\PP^2_{-1}$ as the property of being $\sigma$-compact. To indicate one of these properties generically (that is, in case we do not know whether the superscript $i\in\{1,2\}$ is present or not) we will use the notation $\PP_n^{(i)}$.
\end{definition}

In \cite[Theorem 3.4.24]{vanengelent}, the following order is declared on these properties.
$$
\PP_{-1}^1<\PP_{-1}^2<\cdots<\PP_{4k}<\PP_{4k+1}<\PP^1_{4k+2}<\PP^1_{4k+3}<\PP^2_{4k+2}<\PP^2_{4k+3}<\cdots
$$
In \cite[Definition 3.4.6]{vanengelent}, van Engelen defines the classes $\XX^1_{-1}$, $\XX^2_{-1}$, and $\XX_{4k}$, $\XX_{4k+1}$, $\XX^1_{4k+2}$, $\XX^1_{4k+3}$, $\XX^2_{4k+2}$, $\XX^2_{4k+3}$ for $k\in\omega$.
As with the properties $\PP_n^{(i)}$, we will use the notation $\XX_n^{(i)}$ to indicate one of these classes generically.

The following two results are the most important facts about the classes $\XX_n^{(i)}$ (see \cite[Theorem 3.4.13]{vanengelent} and \cite[Theorem 3.4.24]{vanengelent}). 
In fact, we will not give the general definition of $\XX_n^{(i)}$, but use the more easily understandable Theorem \ref{belowdeltabelongs} instead.
Notice that, by Theorem \ref{belowdeltabelongs}, the class $\XX_n^{(i)}$ is closed under homeomorphisms for every $n\in\omega$ and $i\in\{1,2\}$.

\begin{theorem}[van Engelen]\label{belowdelta1}
Let $n\in\omega$ and $i\in\{1,2\}$. Then, up to homeomorphism, the class $\XX_n^{(i)}$ contains exactly one element, which is homogeneous.
\end{theorem}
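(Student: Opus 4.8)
The result is due to van Engelen, and the plan is to reconstruct his argument. Fix $n$ and $i$. I would prove three statements simultaneously, by induction along the well-order $\PP^1_{-1}<\PP^2_{-1}<\PP_0<\PP_1<\PP^1_2<\PP^1_3<\PP^2_2<\PP^2_3<\cdots$ of the properties involved: (1) a concrete zero-dimensional space $X_n^{(i)}$ belongs to $\XX_n^{(i)}$; (2) $X_n^{(i)}$ is homogeneous; (3) every member of $\XX_n^{(i)}$ is homeomorphic to $X_n^{(i)}$. The lowest levels are handled by classical results: $\XX^1_{-1}$, $\XX^2_{-1}$ and $\XX_0$ correspond to $\QQQ$, $\QQQ\times\CCC$ and $\PPP$ (for which (1)--(3) follow from the known topological characterizations of these spaces, together with Proposition \ref{QandQC} in the case of $\QQQ\times\CCC$), and further early cases include $\TTT\in\XX^1_2$ and $\SSS\in\XX^2_2$, where (1)--(3) follow from Theorem \ref{SandT}, Corollary \ref{borelsemifilterstronglyhomogeneous}, and Propositions \ref{S} and \ref{T}.

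For the inductive step, the concrete witness $X_n^{(i)}$ is built from the unique witness $Y$ of the immediately preceding class by ``adjoining one layer'' of the appropriate descriptive type --- a countable, $\sigma$-compact, or complete ``skeleton'', according to which difference in the $\Diff_\eta(\mathbf{\Sigma}^0_2)$ calculus separates $\PP_n^{(i)}$ from its predecessor --- and grafting clopen copies of $Y$ onto it densely and in a locally uniform fashion, exactly in the manner in which $\SSS$ and $\TTT$ were assembled in the proofs of Propositions \ref{S} and \ref{T}. The construction is arranged so that $X_n^{(i)}$ has a base of clopen sets each homeomorphic to $X_n^{(i)}$ itself; since from level $\PP_0$ onwards these spaces are nowhere $\sigma$-compact, in particular non-compact, Lemma \ref{pibase} shows that $X_n^{(i)}$ is strongly homogeneous, hence homogeneous. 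Verifying that $X_n^{(i)}$ has property $\PP_n^{(i)}$ but is nowhere of any strictly stronger type in the order above is then a direct computation in the $\Diff_\eta(\mathbf{\Sigma}^0_2)$ hierarchy, using the inductively known description of $Y$.

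The crux, and the step I expect to be the main obstacle, is uniqueness. Given $Z,Z'\in\XX_n^{(i)}$, I would build a homeomorphism $Z\to Z'$ by a fusion argument: produce successively finer clopen partitions of $Z$ and of $Z'$ together with a bijection between the pieces of corresponding partitions under which matched pieces are homeomorphic, and extract the homeomorphism in the limit. A single back-and-forth step rests on a lemma of the following shape: every non-empty clopen subset $U$ of a member of $\XX_n^{(i)}$ can be split into clopen pieces, one of which is a copy of a prescribed member of $\XX_n^{(i)}$ (or of a prescribed member of a lower class), while the remainder is still ``of full type $\PP_n^{(i)}$''. This is where the Wadge-theoretic machinery is indispensable: one uses Wadge's Lemma \ref{wadge} and the classification of Borel Wadge classes (Theorem \ref{louveautheorem}) to certify that the local Wadge complexity inside $U$ is exactly $\Diff_\eta(\mathbf{\Sigma}^0_2)$ --- it cannot be lower, because $Z$ is nowhere of lower type, and it cannot be higher, because $Z$ has property $\PP_n^{(i)}$ --- after which one peels off the top layer of $U$ and appeals to the inductive uniqueness of the witness of the preceding class for the residue. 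The genuinely delicate points are the bookkeeping in the fusion and, above all, checking that the residual pieces never degenerate, and the latter is precisely what the ``nowhere of lower type'' clause in the definition of $\XX_n^{(i)}$ is designed to guarantee.
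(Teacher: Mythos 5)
First, a point of orientation: the paper does not prove this theorem at all --- it is quoted from van Engelen's thesis (\cite[Theorem 3.4.13]{vanengelent}) with no internal proof to compare against, so what you have written is an attempted reconstruction of an external result. Your overall architecture (a simultaneous induction producing a concrete representative of each class $\XX_n^{(i)}$, its homogeneity, and a uniqueness theorem proved by a back-and-forth fusion resting on a clopen splitting lemma) does match the shape of van Engelen's Chapter 3, and your treatment of the bottom levels via the characterizations of $\QQQ$, $\QQQ\times\CCC$, $\PPP$, $\TTT$ and $\SSS$ is fine.

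However, the step you yourself identify as the crux contains a genuine gap. You propose to drive the splitting lemma by certifying the ``local Wadge complexity'' of a clopen piece via Lemma \ref{wadge} and Theorem \ref{louveautheorem}. Inside $\Delta$ this cannot work: as the paper observes in Section 7, spaces appearing at the same level of the diagram generate the same Wadge class when embedded in $\CCC$, so Wadge class together with Baire category does not determine the homeomorphism type there --- $\QQQ$ and $\QQQ\times\CCC$ already generate the same class, as do $\TTT$ and $\SSS$ (both of which are moreover Baire), and the phenomenon propagates throughout $\Delta$. In particular the superscript $i\in\{1,2\}$, i.e.\ the distinction between adjoining a countable piece ($\PP^1_m$) and a $\sigma$-compact piece ($\PP^2_m$), is invisible to Wadge reducibility, so your certification step cannot tell which lower-class witness must be peeled off, and the fusion is not forced to match types. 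Van Engelen's actual proof of this theorem is purely topological: it works directly with the decomposition properties $\PP_n^{(i)}$ and a family of absorption lemmas in the style of van Mill's characterization of $\SSS$ and van Douwen's of $\TTT$ (Theorem \ref{SandT}); the Wadge machinery enters only for the classes above $\Delta$, where Steel's theorem takes over. Relatedly, the splitting/absorption lemma you state only ``in shape'' is the entire mathematical content of the theorem and would need to be proved, and your description of the construction of the representatives (grafting copies onto a skeleton) omits the complementation step $X\mapsto\CCC\setminus X$ needed to pass from $\XX^i_{4k-1}$ to $\XX_{4k}$, which is exactly how the paper builds its own semifilter representatives in Theorem \ref{indelta}.
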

\begin{theorem}[van Engelen]\label{belowdeltabelongs}
Let $n\in\omega$ and $i\in\{1,2\}$. Then, for a zero-dimensional space $X$, the following conditions are equivalent.
\begin{itemize}
\item $X\in\XX_n^{(i)}$.
\item $X$ is $\PP_n^{(i)}$ and nowhere $\PP_m^{(j)}$ for every $m\in\{-1\}\cup\omega$ and $j\in\{1,2\}$ such that $\PP_m^{(j)}<\PP_n^{(i)}$.
\end{itemize}
\end{theorem}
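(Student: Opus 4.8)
The plan is to read this off \cite[Definition 3.4.6]{vanengelent}, van Engelen's definition of the classes $\XX_n^{(i)}$ (which is why the statement is attributed to him); a genuinely self-contained proof would have to redevelop the relevant slice of the classification in \cite{vanengelent}, so I only describe the shape of the argument and indicate where I expect the real difficulty to lie.

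For the direction ``$X\in\XX_n^{(i)}$'' $\Rightarrow$ the local characterization, I would use Theorem \ref{belowdelta1} to assume that $X$ is the canonical (unique up to homeomorphism) element of $\XX_n^{(i)}$. This space is assembled --- roughly, by gluing along a clopen partition copies of the model spaces for the levels of the difference hierarchy, generalizing the way $\SSS$ and $\TTT$ are built from $\PPP$, $\CCC$, $\QQQ$ --- precisely so as to be $\PP_n^{(i)}$, so that half is immediate. What remains is to check that $X$ is nowhere $\PP_m^{(j)}$ whenever $\PP_m^{(j)}<\PP_n^{(i)}$: I would show that every non-empty clopen $U\subseteq X$ contains a closed copy of a space whose exact Wadge complexity already exceeds every $\PP_m^{(j)}$ below $\PP_n^{(i)}$. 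This uses the strictness of the difference hierarchy $\Diff_\eta(\mathbf{\Sigma}^0_2)$ (through Lemma \ref{wadge} and Theorem \ref{louveautheorem}) together with the strong homogeneity of $X$, which promotes ``some clopen piece is complicated enough'' to ``every clopen piece is''.

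For the converse --- the substantive direction --- let $X$ be zero-dimensional, $\PP_n^{(i)}$, and nowhere $\PP_m^{(j)}$ for every $\PP_m^{(j)}<\PP_n^{(i)}$. Note first that every non-empty clopen subspace of $X$ again has these two properties, since each $\PP_m^{(j)}$ is inherited by clopen subspaces and the ``nowhere'' condition is inherited by non-empty open subspaces. The plan is then: (i) show from the hypotheses that $X$ is strongly homogeneous, which by Lemma \ref{pibase} reduces to producing a clopen base of mutually homeomorphic pieces; (ii) build a homeomorphism from $X$ onto the canonical representative by a back-and-forth on clopen partitions, where at each stage both spaces are cut into finitely many clopen pieces, each still $\PP_n^{(i)}$ and nowhere smaller, hence of the same local type, so the partial matching extends; and (iii) arrange the successive partitions so that they generate the topology on both sides, so that the limit of the matching is the desired homeomorphism.

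I expect the main obstacle to be step (i) together with the principle underlying step (ii): that the purely local description (``$\PP_n^{(i)}$ and nowhere anything strictly smaller'') pins down the Wadge class of $X$, and with it the homeomorphism type, uniquely. This is exactly the content of \cite[Theorems 3.4.13 and 3.4.24]{vanengelent}; it rests on the description of the Borel Wadge classes through the set $D$ (Theorem \ref{louveautheorem}) and on the behaviour of the operations $\SU(\mathbf{\Gamma},\mathbf{\Sigma}^0_2)$, and I do not see a way to shorten it substantially. Accordingly, for the purposes of this paper we simply quote the statement from \cite{vanengelent}.
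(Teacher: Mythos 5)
The paper offers no proof of this statement: it is imported verbatim from van Engelen's thesis as \cite[Theorem 3.4.24]{vanengelent} (with the uniqueness/homogeneity companion being \cite[Theorem 3.4.13]{vanengelent}), which is exactly where your proposal ends up. Your intermediate sketch of how such a proof would be structured is plausible and correctly locates the real difficulty, but since the paper simply cites the result, quoting it is the intended and sufficient approach here.
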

To complete the picture, let us define the classes that are not covered by Theorem \ref{belowdeltabelongs}.
In \cite[Definition 3.4.6]{vanengelent}, these classes are defined as singletons, but it seems clear that they should be closed under homeomorphism.
\begin{itemize}
\item $\XX^1_{-1}$ is the class of spaces that are homeomorphic to $\QQQ$.
\item $\XX^2_{-1}$ is the class of spaces that are homeomorphic to $\QQQ\times\CCC$.
\end{itemize}
Similarly, $\XX_0$ should be defined as the class of spaces that are homeomorphic to $\PPP$.

The following diagram (which is taken from \cite[page 28]{vanengelent}) illustrates the first few classes $\XX_n^{(i)}$. Spaces that appear at the same level generate the same class when embedded into $\CCC$ (see \cite[Theorems 4.6.4 and 4.6.5]{vanengelent}). In particular, Wadge class and Baire category are not sufficient to determine the homemorphism type. This phenomenon starts with $\QQQ$ and $\QQQ\times\CCC$ and propagates throughout $\Delta$. As we will see in Section 9, this ambiguity disappears for spaces that are not in $\Delta$.

\newpage

\begin{center}
$
\xymatrix{
\QQQ\in\XX_{-1}^1 \ar@/_/[rd] & & \QQQ\times\CCC\in\XX_{-1}^2 \ar@/^/[ld]\\
& \PPP\in\XX_0 \ar[d] &\\
& \QQQ\times\PPP\in\XX_1 \ar@/_/[ld] \ar@/^/[rd] &\\
\TTT\in\XX_2^1 \ar[d] & & \SSS\in\XX_2^2 \ar[d]\\
\QQQ\times\TTT\in\XX_3^1 \ar@/_/[rd] & & \QQQ\times\SSS\in\XX_3^2 \ar@/^/[ld]\\
& \XX_4 \ar[d] &\\
& \vdots &\\
}
$
\end{center}

\bigskip

The following theorem is one of the reasons why the class $\Delta$ plays such an important role (the other reason is Lemma \ref{reasonably}).
It does not appear explicitly in \cite{vanengelent}, but Lemma \ref{indeltaimpliesinDn} is mentioned (without giving any precise reference) in the proofs of \cite[Lemmas 2.4 and 2.5]{vanengeleni}.
The rest of this section is devoted to its proof.

\begin{theorem}\label{fromdeltatoX}
Let $X$ be a zero-dimensional homogeneous space such that $X\in\Delta$ and $X$ is not locally compact. Then $X\in\XX^{(i)}_n$ for some $n\in\omega\cup\{-1\}$ and $i\in\{1,2\}$.
\end{theorem}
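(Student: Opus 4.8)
The plan is to use the structural dichotomy for Wadge classes together with the ``nowhere'' clause in Theorem \ref{belowdeltabelongs}. Since $X\in\Delta=\Diff_\omega(\mathbf{\Sigma}^0_2)\cap\check{\Diff}_\omega(\mathbf{\Sigma}^0_2)$, embed $X$ as a subspace $Y$ of $\CCC$ with $Y\in\Diff^{\CCC}_\omega(\mathbf{\Sigma}^0_2)\cap\check{\Diff}^{\CCC}_\omega(\mathbf{\Sigma}^0_2)$. The key first step is to show that $Y$ actually lands in one of the finite levels: namely, $Y\in\Diff^{\CCC}_\ell(\mathbf{\Sigma}^0_2)$ for some $\ell\in\omega$, or else $Y$ is already $\sigma$-compact or complete (the cases handled by $\PP^2_{-1}$ and the complete part of $\PP_0$). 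The reason is that, if $Y$ were not in any $\Diff^{\CCC}_\ell(\mathbf{\Sigma}^0_2)$ and not $\sigma$-compact and not complete, then by Lemma \ref{wadge} and Theorem \ref{louveautheorem} the Wadge class $[Y]$ would have to properly contain every $\Diff^{\CCC}_\ell(\mathbf{\Sigma}^0_2)$ and every $\check{\Diff}^{\CCC}_\ell(\mathbf{\Sigma}^0_2)$; but then $\check{[Y]}$ would have the same property, forcing $[Y]$ to sit above the ambiguous class at level $\omega$ from both sides, contradicting $Y\in\Delta(\Diff^{\CCC}_\omega(\mathbf{\Sigma}^0_2))$. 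I would make this precise using the inclusions $\Diff_\eta(\mathbf{\Sigma}^0_\xi)\cup\check{\Diff}_\eta(\mathbf{\Sigma}^0_\xi)\subseteq\Diff_\mu(\mathbf{\Sigma}^0_\xi)$ recorded in Section 7 and the enumeration of non-self-dual Borel Wadge classes below $\Diff_\omega(\mathbf{\Sigma}^0_2)$ provided by Theorem \ref{louveautheorem}; concretely, the only non-self-dual Borel Wadge classes strictly below $\Diff^{\CCC}_\omega(\mathbf{\Sigma}^0_2)$ are the $\Diff^{\CCC}_\ell(\mathbf{\Sigma}^0_2)$ and their duals for $\ell<\omega$.

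Call this the \emph{level lemma}. Granting it, each of the (finitely many) possible outcomes gives that $X$ has one of the properties $\PP_n^{(i)}$: indeed, as the excerpt observes, if $X\in\Diff_\ell(\mathbf{\Sigma}^0_2)$ then $X$ has one of the properties in the van Engelen list (a subspace in some $\Diff_{2k}(\mathbf{\Sigma}^0_2)$, a complete piece, etc.), so certainly $X$ is $\PP_n^{(i)}$ for some $n$ and $i$; and the $\sigma$-compact and complete degenerate cases are $\PP^2_{-1}$ and $\PP_0$ respectively. Among all pairs $(n,i)$ for which $X$ is $\PP_n^{(i)}$, choose the one that is $<$-least in van Engelen's ordering $\PP^1_{-1}<\PP^2_{-1}<\cdots<\PP_{4k}<\PP_{4k+1}<\PP^1_{4k+2}<\cdots$ (this ordering is well-founded on the relevant initial segment, so a least element exists). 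It remains to verify the ``nowhere'' half of the characterization in Theorem \ref{belowdeltabelongs}: that $X$ is nowhere $\PP_m^{(j)}$ for every $\PP_m^{(j)}<\PP_n^{(i)}$.

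Here is where homogeneity enters, and I expect this to be the main obstacle. Suppose $X$ had a non-empty open subspace $U$ with property $\PP_m^{(j)}$ for some $\PP_m^{(j)}<\PP_n^{(i)}$. Since $X$ is zero-dimensional we may take $U$ clopen. The properties $\PP_m^{(j)}$ are preserved by passing from a space to its clopen subspaces that are closed (being a finite union of pieces of prescribed Wadge complexity, together with the fact that each $\Diff_{2k}(\mathbf{\Sigma}^0_2)$, ``complete'', ``$\sigma$-compact'', ``countable'' descends to closed subspaces), and by homogeneity of $X$ every point of $X$ has arbitrarily small clopen neighborhoods homeomorphic to $U$ — more carefully, one uses van Mill's homogeneity machinery in the form already exploited in Corollary \ref{finitemodificationshomogeneous}, or simply the observation that in a homogeneous zero-dimensional space one can cover $X$ by clopen sets each homeomorphic to a clopen subset of $U$. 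Then $X$ is a countable union of closed subspaces each with property $\PP_m^{(j)}$; I would check that each of the six families of properties (plus the two degenerate ones) is closed under countable closed unions in the relevant sense — e.g.\ a countable union of closed $\PP^2_{4k+3}$ subspaces is again $\PP^2_{4k+3}$ because $\Diff_{2(k+1)}(\mathbf{\Sigma}^0_2)$ is closed under countable unions of relatively closed pieces and a countable union of $\sigma$-compact sets is $\sigma$-compact. This would give that $X$ itself is $\PP_m^{(j)}$, contradicting the minimality of $(n,i)$. Hence $X$ is nowhere $\PP_m^{(j)}$ for all smaller $\PP_m^{(j)}$, so by Theorem \ref{belowdeltabelongs} we conclude $X\in\XX_n^{(i)}$, as desired. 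The delicate points to nail down are: (a) the precise statement and proof of the level lemma via Theorem \ref{louveautheorem}; and (b) the two closure properties of the $\PP_n^{(i)}$ used above — descent to closed subspaces and closure under countable closed unions — together with the clean way to pass from ``$U$ has property $\PP$'' to ``$X$ is covered by closed copies of pieces of $U$'' using homogeneity. These are the steps where I would be most careful, but each is a routine if slightly tedious bookkeeping once the framework of Section 8 is in place.
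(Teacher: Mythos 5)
There is a genuine gap, and it is in your ``level lemma''. The statement you need there is true, but your proposed proof of it is wrong, because it never uses homogeneity --- and the level lemma is false without homogeneity. The class $\Delta=\Delta(\Diff^{\CCC}_\omega(\mathbf{\Sigma}^0_2))$ \emph{strictly} contains $\bigcup_{\ell<\omega}\Diff^{\CCC}_\ell(\mathbf{\Sigma}^0_2)$: take pairwise disjoint non-empty clopen sets $W_n$ with $\bigcup_n W_n=\CCC\setminus\{p\}$ and let $Y=\bigcup_n(A_n\cap W_n)$ where $A_n\cap W_n$ is $\Diff_n(\mathbf{\Sigma}^0_2)$-complete relative to $W_n\approx\CCC$; then $Y$ and its complement both lie in $\Diff^{\CCC}_\omega(\mathbf{\Sigma}^0_2)$, yet $Y\not\in\Diff^{\CCC}_\ell(\mathbf{\Sigma}^0_2)$ for every $\ell$. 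Accordingly, your Wadge-theoretic contradiction does not materialize: from $Y\notin\Diff_\ell\cup\check{\Diff}_\ell$ for all $\ell$ you correctly get $\bigcup_\ell(\Diff_\ell\cup\check{\Diff}_\ell)\subseteq[Y]$, but this does not force $[Y]$ above $\Delta$ --- the set $Y$ above generates a (self-dual) class sitting exactly at the sup, inside $\Delta$. Your auxiliary claim that the only non-self-dual Borel Wadge classes below $\Diff_\omega(\mathbf{\Sigma}^0_2)$ are the $\Diff_\ell(\mathbf{\Sigma}^0_2)$ and their duals is also false (already below $\mathbf{\Sigma}^0_2$ there are uncountably many, e.g.\ all $\Diff_\eta(\mathbf{\Sigma}^0_1)$). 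The missing idea is that homogeneity rules out the partitioned-union behaviour: this is the content of the paper's Lemma \ref{indeltaimpliesinDn}, which argues that a \emph{homogeneous} $X\in\Diff_\omega(\mathbf{\Sigma}^0_2)\setminus\bigcup_\ell\Diff_\ell(\mathbf{\Sigma}^0_2)$ is nowhere $\PP_n^{(i)}$ (van Engelen's Lemma 3.6.1), hence lies in $\XX_\omega^2=\YY_u^0$ for $u=2^\frown1^\frown\omega^\frown\underline{0}$, and members of $\YY_u^0$ are not in $\check{\mathbf{\Gamma}}_u=\check{\Diff}^{\CCC}_\omega(\mathbf{\Sigma}^0_2)$, contradicting $X\in\Delta$.

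The second half of your argument (choose the $<$-minimal $\PP_n^{(i)}$ that $X$ satisfies, then use homogeneity to upgrade ``not $\PP_m^{(j)}$'' to ``nowhere $\PP_m^{(j)}$'' by covering $X$ with countably many closed copies of a clopen piece and invoking closure of each property under such unions) is sound in outline and is essentially a reproof of van Engelen's Lemma 3.6.1 combined with the paper's Lemma \ref{inDnimpliesinXk}. Two details to repair there: the $\sigma$-compact case must be treated separately, since Theorem \ref{belowdeltabelongs} only covers $n\in\omega$ and the classes $\XX_{-1}^1$, $\XX_{-1}^2$ are instead identified via the characterizations of $\QQQ$ and $\QQQ\times\CCC$ (using that homogeneity plus non-local-compactness gives nowhere compactness); and the closure-under-countable-closed-unions claims should be routed through the $\SU(\mathbf{\Sigma}^0_1,\cdot)$ closure properties of Section 10 rather than asserted ad hoc.
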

\begin{proof}
Simply embed $X$ in $\CCC$, then apply Lemmas \ref{indeltaimpliesinDn} and \ref{inDnimpliesinXk}.
\end{proof}

For the proof following lemma, we will need a couple more definitions. As in \cite[Definition 3.5.7]{vanengelent}, define $\PP_\omega$ as the property of being $\Diff_\omega(\mathbf{\Sigma}^0_2)$. As in \cite[Definition 3.1.8]{vanengelent} (see also the remark at the beginning of \cite[Section 3.5]{vanengelent}), define $\XX^2_\omega$ as the class of all spaces that are $\PP_\omega$ and nowhere $\PP_n^{(i)}$ for every $n\in\omega$ and $i\in\{1,2\}$. See Section 9 for the definition of $\YY_u^0$.

\begin{lemma}\label{indeltaimpliesinDn}
Let $X$ be a homogeneous subspace of $\CCC$. Assume that $X\in\Delta$. Then $X\in \Diff_\ell(\mathbf{\Sigma}^0_2)$ for some $\ell\in\omega$.
\end{lemma}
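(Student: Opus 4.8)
The plan is to derive the lemma from the Wadge-theoretic identity
$$
\Delta(\Diff^{\CCC}_\omega(\mathbf{\Sigma}^0_2))=\bigcup_{\ell<\omega}\Diff^{\CCC}_\ell(\mathbf{\Sigma}^0_2).
$$
Granting this, the lemma is immediate: since $X\subseteq\CCC$ and $X\in\Delta$, the observation that a subset of $\CCC$ belongs to $\Delta$ exactly when it belongs to $\Delta(\Diff^{\CCC}_\omega(\mathbf{\Sigma}^0_2))$ yields $X\in\Diff^{\CCC}_\ell(\mathbf{\Sigma}^0_2)$ for some $\ell<\omega$, hence $X\in\Diff_\ell(\mathbf{\Sigma}^0_2)$. (Homogeneity of $X$ is in fact not used in this lemma; it is retained only because every later application of it is to a homogeneous space.) The inclusion $\supseteq$ in the displayed identity follows from the containment $\Diff_\ell(\mathbf{\Sigma}^0_2)\cup\check{\Diff}_\ell(\mathbf{\Sigma}^0_2)\subseteq\Diff_\omega(\mathbf{\Sigma}^0_2)$ recalled earlier, together with its complementary version, the definition of $\Delta$, and the identification of $\Delta$ with $\Delta(\Diff^{\CCC}_\omega(\mathbf{\Sigma}^0_2))$ on subsets of $\CCC$. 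So the real content is the inclusion $\subseteq$.

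For $\subseteq$, suppose toward a contradiction that $A\subseteq\CCC$ satisfies $A\in\Diff^{\CCC}_\omega(\mathbf{\Sigma}^0_2)\cap\check{\Diff}^{\CCC}_\omega(\mathbf{\Sigma}^0_2)$ but $A\notin\Diff^{\CCC}_\ell(\mathbf{\Sigma}^0_2)$ for every $\ell<\omega$. For each $\ell\geq 1$ the class $\Diff^{\CCC}_\ell(\mathbf{\Sigma}^0_2)$ equals $\mathbf{\Gamma}_{u_\ell}$ with $u_\ell=2^\frown1^\frown\ell^\frown\underline{0}\in D$, hence is a Borel Wadge class, say $\Diff^{\CCC}_\ell(\mathbf{\Sigma}^0_2)=[Y_\ell]$. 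By Lemma \ref{wadge} applied to $A$ and $Y_\ell$, the option $A\leq_W Y_\ell$ is impossible (it would put $A$ in $\Diff^{\CCC}_\ell(\mathbf{\Sigma}^0_2)$), so $Y_\ell\leq_W\CCC\setminus A$. Applying Lemma \ref{wadge} to $\CCC\setminus A$ and $Y_\ell$, the option $\CCC\setminus A\leq_W Y_\ell$ is impossible as well (it would put $A$ in $\check{\Diff}^{\CCC}_\ell(\mathbf{\Sigma}^0_2)\subseteq\Diff^{\CCC}_{\ell+1}(\mathbf{\Sigma}^0_2)$), so $Y_\ell\leq_W A$. Therefore $\Diff^{\CCC}_\ell(\mathbf{\Sigma}^0_2)=[Y_\ell]\subseteq[A]\cap[\CCC\setminus A]$ for every $\ell\geq 1$, whence $\bigcup_{\ell<\omega}\Diff^{\CCC}_\ell(\mathbf{\Sigma}^0_2)\subseteq[A]$. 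On the other hand $\Diff^{\CCC}_\omega(\mathbf{\Sigma}^0_2)$ and $\check{\Diff}^{\CCC}_\omega(\mathbf{\Sigma}^0_2)$ are Wadge classes containing $A$, so $[A]\subseteq\Diff^{\CCC}_\omega(\mathbf{\Sigma}^0_2)\cap\check{\Diff}^{\CCC}_\omega(\mathbf{\Sigma}^0_2)$.

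Thus $[A]$ is a Borel Wadge class squeezed between $\bigcup_{\ell<\omega}\Diff^{\CCC}_\ell(\mathbf{\Sigma}^0_2)$ and $\Diff^{\CCC}_\omega(\mathbf{\Sigma}^0_2)\cap\check{\Diff}^{\CCC}_\omega(\mathbf{\Sigma}^0_2)$. By the semi-linear ordering of the Borel Wadge classes (a consequence of Lemma \ref{wadge}) together with Theorem \ref{louveautheorem}, $\Diff^{\CCC}_\omega(\mathbf{\Sigma}^0_2)$ is the $\subseteq$-least non-self-dual Borel Wadge class not contained in $\bigcup_{\ell<\omega}\Diff^{\CCC}_\ell(\mathbf{\Sigma}^0_2)$; consequently the only Borel Wadge classes lying between $\bigcup_{\ell<\omega}\Diff^{\CCC}_\ell(\mathbf{\Sigma}^0_2)$ and $\Diff^{\CCC}_\omega(\mathbf{\Sigma}^0_2)$ are these two. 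Since $[A]$ is also contained in $\check{\Diff}^{\CCC}_\omega(\mathbf{\Sigma}^0_2)$ while $\Diff^{\CCC}_\omega(\mathbf{\Sigma}^0_2)$ is non-self-dual, we get $[A]\neq\Diff^{\CCC}_\omega(\mathbf{\Sigma}^0_2)$, hence $[A]=\bigcup_{\ell<\omega}\Diff^{\CCC}_\ell(\mathbf{\Sigma}^0_2)$; in particular $A\in[A]$ lies in some $\Diff^{\CCC}_\ell(\mathbf{\Sigma}^0_2)$, contradicting our choice of $A$. This proves the identity, and hence the lemma.

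The main obstacle is the penultimate step: showing that no Borel Wadge class sits strictly between $\bigcup_{\ell<\omega}\Diff^{\CCC}_\ell(\mathbf{\Sigma}^0_2)$ and $\Diff^{\CCC}_\omega(\mathbf{\Sigma}^0_2)$ --- that is, the continuity of the difference hierarchy over $\mathbf{\Sigma}^0_2$ at the limit $\omega$. This is part of the Louveau--van Engelen analysis of the Borel Wadge classes, but it is not isolated in exactly the form needed in the material quoted above; one either reads it off from the ordering and the types of the classes $\mathbf{\Gamma}_u$, $u\in D$, in \cite{vanengelent} (compare \cite{louveausaintraymond}), or argues directly --- by a fusion construction on the levels of a necessarily unbounded Hausdorff resolution --- that any $A\in\Diff^{\CCC}_\omega(\mathbf{\Sigma}^0_2)$ which fails to be $\Diff^{\CCC}_\ell(\mathbf{\Sigma}^0_2)$ for every $\ell<\omega$ must be $\Diff^{\CCC}_\omega(\mathbf{\Sigma}^0_2)$-hard.
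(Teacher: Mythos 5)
Your proof has a genuine gap, and it sits exactly where you put all the weight: the identity $\Delta(\Diff^{\CCC}_\omega(\mathbf{\Sigma}^0_2))=\bigcup_{\ell<\omega}\Diff^{\CCC}_\ell(\mathbf{\Sigma}^0_2)$ is false, and so is your parenthetical claim that homogeneity is not used. For a counterexample, fix a point $p\in\CCC$ and pairwise disjoint non-empty clopen sets $V_n$ with $\bigcup_{n\in\omega}V_n=\CCC\setminus\{p\}$, and for each $n\geq 1$ pick $A_n\subseteq V_n$ with $A_n\in\Diff^{V_n}_n(\mathbf{\Sigma}^0_2)\setminus\check{\Diff}^{V_n}_n(\mathbf{\Sigma}^0_2)$ (possible since each $\Diff^{\CCC}_n(\mathbf{\Sigma}^0_2)=\mathbf{\Gamma}_{u_n}$ with $u_n=2^\frown 1^\frown n^\frown\underline{0}$ is non-self-dual). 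Let $A=\bigcup_{n\geq 1}A_n$. Intersecting with the clopen set $V_{\ell+1}$ shows $A\notin\Diff^{\CCC}_\ell(\mathbf{\Sigma}^0_2)$ for every $\ell<\omega$. On the other hand, $A=\bigcup_n(A_n\cap V_n)\in\SU(\mathbf{\Sigma}^0_1,\Diff^{\CCC}_\omega(\mathbf{\Sigma}^0_2))=\Diff^{\CCC}_\omega(\mathbf{\Sigma}^0_2)$ by \cite[Lemma 4.2.11(a)]{vanengelent}, and $\CCC\setminus A=\{p\}\cup\bigcup_n\bigl((V_n\setminus A_n)\cap V_n\bigr)$ also lies in $\Diff^{\CCC}_\omega(\mathbf{\Sigma}^0_2)$, since $V_n\setminus A_n\in\check{\Diff}^{\CCC}_n(\mathbf{\Sigma}^0_2)\subseteq\Diff^{\CCC}_{n+1}(\mathbf{\Sigma}^0_2)$ and the class $\mathbf{\Gamma}_u$ with $u=2^\frown 1^\frown\omega^\frown\underline{0}\in D_0$ is closed under separated unions and under union with the $\mathbf{\Sigma}^0_2$ set $\{p\}$ (Lemma \ref{GdeltaFsigma}). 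Hence $A\in\Delta(\Diff^{\CCC}_\omega(\mathbf{\Sigma}^0_2))\setminus\bigcup_{\ell<\omega}\Diff^{\CCC}_\ell(\mathbf{\Sigma}^0_2)$. The step that breaks is precisely the one you flag as the main obstacle: Theorem \ref{louveautheorem} only classifies the \emph{non-self-dual} Borel Wadge classes, so it cannot rule out a self-dual class strictly between $\bigcup_{\ell<\omega}\Diff^{\CCC}_\ell(\mathbf{\Sigma}^0_2)$ and $\Diff^{\CCC}_\omega(\mathbf{\Sigma}^0_2)$ --- and $[A]$ is exactly such a class. (Note also that $\bigcup_{\ell<\omega}\Diff^{\CCC}_\ell(\mathbf{\Sigma}^0_2)$ is not itself a Wadge class, since any generator would lie in some $\Diff^{\CCC}_\ell$; so your final conclusion $[A]=\bigcup_{\ell<\omega}\Diff^{\CCC}_\ell(\mathbf{\Sigma}^0_2)$ could never hold.)

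The lemma therefore genuinely needs homogeneity, and the paper's proof uses it essentially: if $X$ is homogeneous and $\alpha=\omega$ is the least ordinal with $X\in\Diff_\alpha(\mathbf{\Sigma}^0_2)$, then by \cite[Lemma 3.6.1]{vanengelent} (which upgrades ``somewhere of low complexity'' to ``of low complexity'' using homogeneity) $X$ is $\PP_\omega$ and nowhere $\PP_n^{(i)}$, so $X\in\XX^2_\omega=\YY^0_u$ for $u=2^\frown 1^\frown\omega^\frown\underline{0}$ by \cite[Theorem 4.6.2(a)]{vanengelent}; members of $\YY^0_u$ are nowhere $\check{\PP}_u$, contradicting $X\in\Delta\subseteq\check{\mathbf{\Gamma}}_u$. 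It is exactly this use of homogeneity that excludes ``mixed'' sets like $A$ above: if the $A_n$ are chosen to be homogeneous representatives of the appropriate classes $\XX^{(i)}_m$, then Lemma \ref{wadgeindependent} shows that small clopen neighborhoods of points in $A_1$ and in $A_n$ (for large $n$) cannot be homeomorphic, so $A$ is not homogeneous and does not contradict the lemma. To repair your write-up you would have to abandon the displayed identity and bring homogeneity back into play, which in effect leads you to the paper's argument.
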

\begin{proof}
Since $X\in\Delta\subseteq \Diff_\omega(\mathbf{\Sigma}^0_2)$, we can fix the least $\alpha\in\omega\cup\{\omega\}$ such that $X\in\Diff_\alpha(\mathbf{\Sigma}^0_2)$. Assume, in order to get a contradiction, that $\alpha=\omega$. It follows from \cite[Lemma 3.6.1]{vanengelent} that $X$ is $\PP_\omega$ and nowhere $\PP_n^{(i)}$ for every $n\in\omega$ and $i\in\{1,2\}$. This means that $X\in\XX_\omega^2$. However $\XX_\omega^2=\YY_u^0$ by \cite[Theorem 4.6.2(a)]{vanengelent}, where $u=2^\frown 1^\frown\omega^\frown \underline{0}$.
This contradicts the definition of $\YY_u^0$, because $X\in\Delta(\Diff^{\CCC}_\omega(\mathbf{\Sigma}^0_2))\subseteq\check{\Diff}^{\CCC}_\omega(\mathbf{\Sigma}^0_2)=\check{\mathbf{\Gamma}}_u$.
\end{proof}

\newpage

The proof of the following lemma is taken from the proof of \cite[Theorem 3.6.2]{vanengelent}.

\begin{lemma}\label{inDnimpliesinXk}
Let $X$ be a zero-dimensional homogeneous space. Assume that $X$ is not locally compact and $X\in \Diff_\ell(\mathbf{\Sigma}^0_2)$ for some $\ell\in\omega$. Then $X\in\XX^{(i)}_n$ for some $n\in\omega\cup\{-1\}$ and $i\in\{1,2\}$.
\end{lemma}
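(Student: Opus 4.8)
The plan is to argue by contradiction, using the well-ordering of the properties $\PP_n^{(i)}$ and the fact that $X$ is homogeneous, hence ``the same everywhere''. First I would observe that since $X\in\Diff_\ell(\mathbf{\Sigma}^0_2)$ for some $\ell\in\omega$, the space $X$ has at least one of the properties $\PP_n^{(i)}$ (this is the ``obvious fact'' highlighted right after the definition of the $\PP_n^{(i)}$: a $\Diff_\ell(\mathbf{\Sigma}^0_2)$ space falls under one of $\PP_{4k}$, $\PP_{4k+1}$, $\PP^1_{4k+2}$, $\PP^1_{4k+3}$, $\PP^2_{4k+2}$, $\PP^2_{4k+3}$ for an appropriate $k$). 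Therefore the set of pairs $(n,i)$ (including the degenerate values $n=-1$) such that $X$ is $\PP_n^{(i)}$ is non-empty, and by the well-ordering
$$
\PP_{-1}^1<\PP_{-1}^2<\cdots<\PP_{4k}<\PP_{4k+1}<\PP^1_{4k+2}<\PP^1_{4k+3}<\PP^2_{4k+2}<\PP^2_{4k+3}<\cdots
$$
declared in \cite[Theorem 3.4.24]{vanengelent}, I can fix the $<$-least such pair $(n,i)$, so that $X$ is $\PP_n^{(i)}$ and $X$ is not $\PP_m^{(j)}$ for any $\PP_m^{(j)}<\PP_n^{(i)}$.

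Next I would upgrade ``$X$ is not $\PP_m^{(j)}$'' to ``$X$ is nowhere $\PP_m^{(j)}$'' using homogeneity. The point is that each property $\PP_m^{(j)}$ is inherited by open (indeed clopen) subspaces and, more importantly, is ``locally determined'' in the sense relevant here: if some non-empty open subspace $U$ of $X$ were $\PP_m^{(j)}$, then by homogeneity of $X$ one can cover $X$ by homeomorphic copies of open pieces of $U$, and — since each of the $\PP_m^{(j)}$ with $m\le n$ is closed under countable unions of (relatively) closed or open subspaces in the appropriate way, and $X$ is Lindelöf — conclude that all of $X$ is $\PP_m^{(j)}$, contradicting minimality of $(n,i)$. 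Here I would need to be slightly careful about the case distinctions in the superscripts, but the mechanism is the standard ``homogeneous $\Rightarrow$ if somewhere $\PP$ then everywhere $\PP$'' argument, and van Engelen's framework is set up precisely so that the $\PP_n^{(i)}$ behave well under this. Once this is done, $X$ is $\PP_n^{(i)}$ and nowhere $\PP_m^{(j)}$ for every $\PP_m^{(j)}<\PP_n^{(i)}$, which by Theorem \ref{belowdeltabelongs} (for $n\in\omega$) gives exactly $X\in\XX_n^{(i)}$. For the boundary cases $n=-1$, I would instead invoke the explicit descriptions: if $X$ is $\PP^1_{-1}$ (countable) and not locally compact, then $X$ is a countable space that is nowhere compact, hence dense in itself, so $X\approx\QQQ$ and $X\in\XX^1_{-1}$; similarly if $X$ is $\PP^2_{-1}$ ($\sigma$-compact) but not $\PP^1_{-1}$ and not locally compact, a short argument using the characterizations of $\QQQ$ and $\CCC$ from Section 2 identifies $X$ with $\QQQ\times\CCC$, so $X\in\XX^2_{-1}$. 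The hypothesis ``not locally compact'' is exactly what rules out the degenerate possibilities ($X$ discrete, $X\approx\CCC$, $X\approx\omega\times\CCC$) that would otherwise slip through when $n$ is small.

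The main obstacle I anticipate is the ``somewhere $\PP$ implies everywhere $\PP$'' step: making precise that each $\PP_m^{(j)}$ is preserved under the relevant countable unions and is therefore a ``local'' property in the presence of homogeneity. This requires knowing that classes like $\Diff_\eta(\mathbf{\Sigma}^0_2)$, together with unions with a complete / countable / $\sigma$-compact summand, are stable under passing from an open cover back to the whole space — which for separable metrizable $X$ reduces to a countable subcover plus a countable-union closure property of these difference-hierarchy classes. This is where I would lean hardest on \cite{vanengelent}; indeed, since the lemma's proof is stated to be ``taken from the proof of \cite[Theorem 3.6.2]{vanengelent}'', I expect van Engelen has already isolated exactly the closure lemma needed (analogous to \cite[Lemma 3.6.1]{vanengelent} used in the proof of Lemma \ref{indeltaimpliesinDn}), and the real content of the write-up is assembling these pieces and handling the low-index bookkeeping rather than proving anything genuinely new.
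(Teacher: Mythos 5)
Your proposal is correct and follows essentially the same route as the paper: fix the $<$-least property $\PP_n^{(i)}$ that $X$ has, dispose of the $\sigma$-compact cases via the classical characterizations of $\QQQ$ and $\QQQ\times\CCC$, upgrade ``not $\PP_m^{(j)}$'' to ``nowhere $\PP_m^{(j)}$'' using homogeneity, and apply Theorem \ref{belowdeltabelongs}. The closure lemma you anticipate needing for the ``somewhere implies everywhere'' step is exactly \cite[Lemma 3.6.1]{vanengelent}, which is what the paper cites (after first ruling out the $\sigma$-compact case, which is the only bookkeeping point your sketch leaves implicit).
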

\begin{proof}
Let $<$ denote the linear ordering on the properties $\PP^{(j)}_m$ for $m\in\omega\cup\{-1\}$ and $j\in\{1,2\}$ defined earlier in this section. First notice that $X$ is $\PP^{(i)}_n$ for some $n\in\omega$ and $i\in\{1,2\}$. For example, picking any $k\in\omega$ such that $2(k+1)\geq\ell$ will show that $X$ is $\PP_{4k+1}$.
Therefore, we can fix $n\in\omega$ and $i\in\{1,2\}$ such that $\PP^{(i)}_n$ is the minimal property with respect to $<$ such that $X$ is $\PP^{(i)}_n$.

Notice that $X$ is nowhere compact because it is homogeneous and not locally compact.
Therefore, if $X$ is $\sigma$-compact then either $X\approx\QQQ$ (if $X$ is countable) or $X\approx\QQQ\times \CCC$ (if $X$ is nowhere countable, by \cite[Theorem 2.4.5]{vanengelent}).
Notice that $X\in\XX^1_{-1}$ in the first case and $X\in\XX^2_{-1}$ in the second case. So assume that $X$ is not $\sigma$-compact.

By \cite[Lemma 3.6.1]{vanengelent}, it follows that $X$ is nowhere $\PP^{(j)}_m$ whenever $m\in\omega\cup\{-1\}$ and $j\in\{1,2\}$
are such that $\PP^{(j)}_m<\PP^{(i)}_n$. Therefore $\XX\in\XX^{(i)}_n$ by Theorem \ref{belowdeltabelongs}.
\end{proof}

\section{From homogeneous space to semifilter: the case below $\Delta$}

In the proof of Theorem \ref{indelta}, we will need the fact that certain operations, when applied to a semifilter, yield a space that is still homeomorphic to a semifilter.
The following three lemmas make this explicit. Notice that Lemma \ref{semifiltercomplement} cannot be generalized to filters.

\begin{lemma}\label{semifiltercomplement}
Let $\Ss$ be a semifilter. Then $\CCC\setminus\Ss$ is homeomorphic to a semifilter.
\end{lemma}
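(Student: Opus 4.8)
The plan is to produce, from an arbitrary semifilter $\Ss$, an explicit semifilter $\Ss'$ together with a homeomorphism $\CCC \longrightarrow \CCC$ carrying $\CCC \setminus \Ss$ onto $\Ss'$. The natural first attempt is the complement function $c = h_\omega$: since $\Ss$ is a semifilter, $c[\Ss]$ is a semiideal, so $c[\CCC \setminus \Ss] = \CCC \setminus c[\Ss]$ is the complement of a semiideal, i.e.\ an upward-closed family closed under finite modifications. The only defect is that this family may contain $\varnothing$ and may fail to contain $\omega$; more precisely, $\varnothing \in \CCC \setminus c[\Ss]$ iff $\omega \notin c[\Ss]$ iff $\varnothing \notin \Ss$, which is always true, so $\varnothing$ is always in it, and $\omega \in \CCC \setminus c[\Ss]$ iff $\omega \notin c[\Ss]$... wait, one must check: $\omega \in c[\Ss]$ iff $\varnothing \in \Ss$, which is false, so $\omega \in \CCC \setminus c[\Ss]$ always. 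Thus $\CCC \setminus c[\Ss]$ is upward-closed, closed under finite modifications, contains $\omega$, but contains $\varnothing$, so it is "a semifilter together with the extra point $\varnothing$" — equivalently, it is $\PP(\omega) \setminus \mathcal{I}$ for the semiideal $\mathcal{I} = c[\Ss]$, which contains $\varnothing$ precisely because $\mathcal{I} \ne \PP(\omega)$.

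So the real content is: the complement of a semifilter is homeomorphic to $\{\varnothing\} \cup \mathcal{T}$ where $\mathcal{T}$ is a semifilter (namely $\mathcal{T} = (\CCC\setminus c[\Ss]) \setminus \{\varnothing\}$, which one checks is genuinely a semifilter), and I must show $\{\varnothing\} \cup \mathcal{T}$ is itself homeomorphic to a semifilter. The trick here is a "shift" or "coordinate-splitting" construction. Fix a partition $\omega = \Omega_0 \sqcup \Omega_1$ into two infinite pieces and a bijection $\omega \to \Omega_1$. Define $\Ss'' = \{ x \subseteq \omega : x \cap \Omega_0 \ne \varnothing \text{ and } x \cap \Omega_1 \in \widehat{\mathcal{T}} \}$ or some similar recipe, designed so that $\Ss''$ is a semifilter and so that the locally-compact-vs-not and combinatorial structure of $\{\varnothing\} \cup \mathcal{T}$ is reproduced. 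A cleaner route: note $\{\varnothing\} \cup \mathcal{T}$ is an open subspace of $\CCC$ near $\varnothing$ together with the rest, and one can absorb the isolated-looking behaviour at $\varnothing$ by "freezing" finitely many coordinates. Concretely, identify $\CCC$ with $2 \times \CCC$ (via the first coordinate), and set $\Ss' = (\{1\}\times \CCC) \cup (\{0\} \times \mathcal{T})$ after relabelling; since $\{1\}\times\CCC$ corresponds to all sets containing coordinate $0$, and these form a copy of $\Cof(\{0\}) \times \PP(\omega\setminus\{0\})$-style upward-closed family, $\Ss'$ becomes a semifilter on $\omega$ whose underlying space is $\CCC \sqcup (\{\varnothing\}\cup\mathcal{T}') $ — and one must verify this disjoint sum is homeomorphic to $\{\varnothing\}\cup\mathcal{T}$ itself.

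The key steps, in order: (1) verify $c[\Ss]$ is a semiideal and hence $\CCC \setminus \Ss \approx \PP(\omega) \setminus c[\Ss]$, an upward-closed, finite-modification-closed family containing both $\varnothing$ and $\omega$; (2) set $\mathcal{T} = (\PP(\omega)\setminus c[\Ss]) \setminus \{\varnothing\}$ and check it is a semifilter (upward-closure and finite-modification-closure are inherited; $\varnothing\notin\mathcal{T}$ by construction; $\omega\in\mathcal{T}$ from step (1)); so $\CCC\setminus\Ss \approx \{\varnothing\}\cup\mathcal{T}$; (3) exhibit a semifilter whose space is $\{\varnothing\}\cup\mathcal{T}$ up to homeomorphism, via a coordinate-splitting/absorption argument using a partition of $\omega$ and the classical fact (or a direct construction via Terada's Lemma \ref{pibase}, or van Mill's Theorem \ref{homogeneitylemma}) that adding a single point to a dense-in-itself zero-dimensional space in this controlled way does not leave the class of semifilters. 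The main obstacle is step (3): one must guarantee that the single extra point $\varnothing$, which sits as a non-isolated point of $\{\varnothing\}\cup\mathcal{T}$ (since $\Fin \cap (\CCC\setminus\Ss) = \Fin$ is dense near... actually $\varnothing$ is a limit of singletons $\{n\}$, each of which lies in $\CCC\setminus\Ss$), really does behave homogeneously with the semifilter points; the safest way to finish is to observe that $\{\varnothing\}\cup\mathcal{T}$ is already closed under finite modifications as a subset of $\CCC$ — because $\varnothing =^\ast$ any finite set, and all finite sets lie in $\CCC\setminus\Ss$ hence in $\{\varnothing\}\cup\mathcal{T}$ — and upward closure fails only at $\varnothing$, which is repaired by the partition trick above; then Corollary \ref{finitemodificationshomogeneous} plus a direct combinatorial description yields an honest semifilter homeomorphic to it.
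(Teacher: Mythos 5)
Your step (1) contains a computational error that derails the rest of the argument. You claim that $\varnothing$ always belongs to $\CCC\setminus c[\Ss]$, but the opposite holds: since $c(\omega)=\varnothing$ and $\omega\in\Ss$, we have $\varnothing\in c[\Ss]$, hence $\varnothing\notin\CCC\setminus c[\Ss]=c[\CCC\setminus\Ss]$. (The chain ``$\varnothing\in\CCC\setminus c[\Ss]$ iff $\omega\notin c[\Ss]$'' is where things go wrong; membership of $\varnothing$ in $c[\CCC\setminus\Ss]$ is governed by whether $\omega\in\CCC\setminus\Ss$, i.e.\ by whether $\omega\notin\Ss$, which is false.) Once this is corrected, the family $c[\CCC\setminus\Ss]$ is seen to omit $\varnothing$, contain $\omega$, be upward-closed (it is the $c$-image of $\CCC\setminus\Ss$, which is downward-closed because $\Ss$ is upward-closed) and be closed under finite modifications; that is, it is already a semifilter, and since $c$ is a homeomorphism of $\CCC$ the proof is complete. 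This is precisely the paper's argument, phrased there as: $\CCC\setminus\Ss$ is a semiideal, hence $c[\CCC\setminus\Ss]$ is a semifilter homeomorphic to $\CCC\setminus\Ss$.

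The consequence of the error is that your step (3), which you yourself flag as the main obstacle, is devoted to repairing a defect that does not exist, and it is in any case not a proof as written: the construction is left at the level of ``or some similar recipe'' and ``one must verify,'' with no verification that the proposed family is a semifilter on $\omega$ nor that the claimed homeomorphism with $\{\varnothing\}\cup\mathcal{T}$ exists. Your steps (1) and (2), once the membership of $\varnothing$ is computed correctly, already yield the statement with $\mathcal{T}=c[\CCC\setminus\Ss]$ itself and no extra point to absorb.
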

\begin{proof}
It is straightforward to check that $\CCC\setminus\Ss$ is a semiideal. It follows that $\TT=c[\CCC\setminus\Ss]$ is a semifilter.
The trivial fact that $\TT\approx\CCC\setminus\Ss$ concludes the proof.
\end{proof}

\begin{lemma}\label{semifilterrational}
Let $\Ss$ be a semifilter. Then $\QQQ\times\Ss$ is homeomorphic to a semifilter.
\end{lemma}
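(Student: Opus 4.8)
The plan is to realize $\QQQ\times\Ss$ concretely as a semifilter by splitting $\omega$ into two infinite pieces $\Omega_1$ and $\Omega_2$, using the $\Omega_1$-coordinates to build a copy of $\QQQ$ (in the guise of the filter $\Cof(\Omega_1)$, as in Proposition \ref{QandQC}) and the $\Omega_2$-coordinates to carry an isomorphic copy $\Ss'$ of $\Ss$. The natural candidate is
$$
\TT=\{x\subseteq\omega:x\cap\Omega_1\in\Cof(\Omega_1)\text{ and }x\cap\Omega_2\in\Ss'\},
$$
where $\Ss'\subseteq\PP(\Omega_2)$ is the image of $\Ss$ under a bijection $\omega\to\Omega_2$. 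First I would check that $\TT$ is a semifilter on $\omega$: it is upward-closed because both $\Cof(\Omega_1)$ and $\Ss'$ are upward-closed in their respective coordinates; $\varnothing\notin\TT$ since $\varnothing\cap\Omega_2=\varnothing\notin\Ss'$; $\omega\in\TT$ since $\Omega_1\in\Cof(\Omega_1)$ and $\Omega_2\in\Ss'$; and closure under finite modifications follows because altering finitely many coordinates of $x$ alters $x\cap\Omega_1$ and $x\cap\Omega_2$ only by finite modifications, and each of $\Cof(\Omega_1)$, $\Ss'$ is closed under finite modifications.

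Next I would identify the homeomorphism type. Writing $x=x_1\cup x_2$ with $x_i=x\cap\Omega_i$, the defining condition on $x_1$ and the condition on $x_2$ are independent, so
$$
\TT=\{x_1\cup x_2:x_1\in\Cof(\Omega_1)\text{ and }x_2\in\Ss'\},
$$
which exhibits $\TT$, via the obvious homeomorphism $2^\omega\approx 2^{\Omega_1}\times 2^{\Omega_2}$, as $\Cof(\Omega_1)\times\Ss'\approx\QQQ\times\Ss$, using that $\Cof(\Omega_1)\approx\Cof=\QQQ$ and $\Ss'\approx\Ss$. This is essentially the same bookkeeping as in the proof of Proposition \ref{QandQC}, so it should be dispatched in a line or two.

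I do not expect any genuine obstacle here: the statement is a routine product construction, and the only mildly delicate point is making sure the product decomposition of the topology is compatible with the product decomposition of $\TT$ — i.e. that requiring a condition separately on the $\Omega_1$-part and on the $\Omega_2$-part really does correspond to a topological product, which is immediate since $\Omega_1$ and $\Omega_2$ partition $\omega$. One could alternatively phrase the argument using the homeomorphisms $h_F$ from Section 3 to verify closure under finite modifications, but the direct coordinatewise check above is cleanest. So the proof should be short, parallel in structure to Propositions \ref{QandQC}, \ref{S}, and \ref{T}.
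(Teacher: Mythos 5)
Your proposal is correct and is essentially identical to the paper's proof: the same partition $\omega=\Omega_1\cup\Omega_2$, the same semifilter $\TT=\{x_1\cup x_2:x_1\in\Cof(\Omega_1)\text{ and }x_2\in\Ss(\Omega_2)\}$, and the same identification $\TT\approx\Cof(\Omega_1)\times\Ss(\Omega_2)\approx\QQQ\times\Ss$. The paper merely leaves the semifilter verification to the reader, which you spell out.
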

\begin{proof}
Let $\Omega_1$ and $\Omega_2$ be infinite sets such that $\Omega_1\cup\Omega_2=\omega$ and $\Omega_1\cap\Omega_2=\varnothing$.
Fix a bijection $\pi:\omega\longrightarrow\Omega_2$, and let $\Ss(\Omega_2)=\{\pi[x]:x\in\Ss\}$.
It is clear that $\Ss(\Omega_2)$ is a semifilter on $\Omega_2$ that is homeomorphic to $\Ss$. Define
$$
\TT=\{x_1\cup x_2:x_1\in\Cof(\Omega_1)\text{ and }x_2\in\Ss(\Omega_2)\},
$$
and observe that $\TT$ is a semifilter. The trivial fact that $\TT\approx\Cof(\Omega_1)\times\Ss(\Omega_2)$ concludes the proof.
\end{proof}

\begin{lemma}\label{semifiltertricky}
Let $\Ss$ be a semifilter. Then $(\CCC\times\CCC)\setminus (\Fin\times (\CCC\setminus\Ss))$ is homeomorphic to a semifilter.
\end{lemma}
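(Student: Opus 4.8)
The plan is to imitate the proofs of Propositions \ref{S} and \ref{T} and of Lemma \ref{semifilterrational}: I will write down an explicit semifilter and then check that it is homeomorphic to the given space. Fix infinite sets $\Omega_1$ and $\Omega_2$ with $\Omega_1\cup\Omega_2=\omega$ and $\Omega_1\cap\Omega_2=\varnothing$. As in Lemma \ref{semifilterrational}, fix a bijection $\pi:\omega\longrightarrow\Omega_2$ and set $\Ss(\Omega_2)=\{\pi[x]:x\in\Ss\}$, so that $\Ss(\Omega_2)$ is a semifilter on $\Omega_2$ with $\Ss(\Omega_2)\approx\Ss$. Then define
$$
\TT=\{x_1\cup x_2:x_1\subseteq\Omega_1\text{, }x_2\subseteq\Omega_2\text{, and }(x_1\notin\Fin(\Omega_1)\text{ or }x_2\in\Ss(\Omega_2))\}.
$$

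First I would verify that $\TT$ is a semifilter. Since $\varnothing\in\Fin(\Omega_1)$ while $\varnothing\notin\Ss(\Omega_2)$, we have $\varnothing\notin\TT$; and $\omega=\Omega_1\cup\Omega_2\in\TT$ because $\Omega_1$ is infinite, so $\Omega_1\notin\Fin(\Omega_1)$. For the two closure conditions, given $z\subseteq\omega$ write $z_1=z\cap\Omega_1$ and $z_2=z\cap\Omega_2$, so $z=z_1\cup z_2$; if $w=^\ast z$ then $w\cap\Omega_i=^\ast z_i$, and if $z\subseteq w$ then $z_i\subseteq w\cap\Omega_i$, for $i\in\{1,2\}$. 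Combining these observations with the facts that a finite modification (respectively a superset) of an infinite subset of $\Omega_1$ is still infinite, and that $\Ss(\Omega_2)$ is closed under finite modifications and upward-closed, one checks routinely that $\TT$ is closed under finite modifications and upward-closed. Hence $\TT$ is a semifilter.

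It remains to identify the homeomorphism type of $\TT$. The map $z\mapsto(z\cap\Omega_1,z\cap\Omega_2)$ is a homeomorphism of $2^\omega$ onto $2^{\Omega_1}\times 2^{\Omega_2}$ that carries $\TT$ onto $\{(x_1,x_2):x_1\notin\Fin(\Omega_1)\text{ or }x_2\in\Ss(\Omega_2)\}$. Composing with homeomorphisms $2^{\Omega_1}\approx\CCC$ and $2^{\Omega_2}\approx\CCC$ induced by bijections of $\Omega_1$ and $\Omega_2$ with $\omega$ — using $\pi^{-1}$ on the second coordinate, so that $\Ss(\Omega_2)$ is sent to $\Ss$, and an arbitrary bijection on the first, so that $\Fin(\Omega_1)$ is sent to $\Fin$ — one sees that this space is homeomorphic to $\{(x,y)\in\CCC\times\CCC:x\notin\Fin\text{ or }y\in\Ss\}$, which is precisely $(\CCC\times\CCC)\setminus(\Fin\times(\CCC\setminus\Ss))$. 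This completes the proof.

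I do not expect any genuine obstacle: the verification that $\TT$ is a semifilter is a routine splitting argument along the partition $\omega=\Omega_1\cup\Omega_2$, and the only point requiring a bit of care is the bookkeeping in the last step, where the product $\CCC\times\CCC$ must be identified with $2^\omega$ so that the copy of $\Fin$ sits in the first coordinate and the copy of $\Ss$ in the second.
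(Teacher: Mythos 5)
Your proposal is correct and follows essentially the same route as the paper's own proof: the same semifilter $\TT$ built from the partition $\omega=\Omega_1\cup\Omega_2$, with the same identification of $(\CCC\times\CCC)\setminus(\Fin\times(\CCC\setminus\Ss))$ with $(2^{\Omega_1}\times 2^{\Omega_2})\setminus(\Fin(\Omega_1)\times(2^{\Omega_2}\setminus\Ss(\Omega_2)))$ via coordinatewise induced homeomorphisms. The only difference is that you spell out the routine verification that $\TT$ is a semifilter, which the paper leaves to the reader.
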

\begin{proof}
Let $\Omega_1$ and $\Omega_2$ be infinite sets such that $\Omega_1\cup\Omega_2=\omega$ and $\Omega_1\cap\Omega_2=\varnothing$. Fix bijections $\sigma:\omega\longrightarrow\Omega_1$ and $\pi:\omega\longrightarrow\Omega_2$.
Let $\Ss(\Omega_2)=\{\pi[x]:x\in\Ss\}$, and define
$$
\TT=\{x_1\cup x_2:x_1\subseteq\Omega_1\text{, }x_2\subseteq\Omega_2\text{, and }(x_1\notin\Fin(\Omega_1)\text{ or }x_2\in\Ss(\Omega_2))\},
$$
and observe that $\TT$ is a semifilter. We will show that $\TT$ is the desired semifilter.

It is clear that $\sigma$ and $\pi$ induce homeomorphisms $h:\CCC\longrightarrow 2^{\Omega_1}$ and $g:\CCC\longrightarrow2^{\Omega_2}$ respectively
such that $h[\Fin(\Omega_1)]=\Fin$ and $g[\Ss(\Omega_2)]=\Ss$. It follows that $h\times g:\CCC\times\CCC\longrightarrow 2^{\Omega_1}\times 2^{\Omega_2}$
is a homeomorphism such that
$$
(h\times g)[(\CCC\times\CCC)\setminus (\Fin\times (\CCC\setminus\Ss))]=(2^{\Omega_1}\times 2^{\Omega_2})\setminus (\Fin(\Omega_1)\times (2^{\Omega_2}\setminus\Ss(\Omega_2))).
$$
The trivial fact that $\TT\approx (2^{\Omega_1}\times 2^{\Omega_2})\setminus (\Fin(\Omega_1)\times (2^{\Omega_2}\setminus\Ss(\Omega_2)))$ concludes the proof.
\end{proof}

\begin{theorem}\label{indelta}
Let $X$ be a zero-dimensional homogeneous space. Assume that $X\in\Delta$ and $X$ is not locally compact. Then there exists a semifilter $\Ss$ such that $\Ss\approx X$.
\end{theorem}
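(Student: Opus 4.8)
The plan is to proceed by induction on the classification of homogeneous zero-dimensional spaces in $\Delta$ provided by Theorem \ref{fromdeltatoX}, which tells us that $X\in\XX_n^{(i)}$ for some $n\in\omega\cup\{-1\}$ and $i\in\{1,2\}$. By Theorem \ref{belowdelta1}, each such class contains exactly one space up to homeomorphism, so it suffices to exhibit, for each $n$ and $i$, a single semifilter belonging to $\XX_n^{(i)}$; equivalently, to show that whenever $Y$ is a semifilter lying in some class $\XX_m^{(j)}$, one can build a semifilter in the next class up in the ordering $\PP_{-1}^1<\PP_{-1}^2<\PP_0<\PP_1<\PP_{4k+2}^1<\cdots$. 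The base cases $\XX_{-1}^1\ni\QQQ$, $\XX_{-1}^2\ni\QQQ\times\CCC$ (Proposition \ref{QandQC}), and $\XX_0\ni\PPP$ are already in hand: for $\PPP$, note $\PP(\omega)\setminus\Fin$ is a semifilter homeomorphic to $\PPP$ by the characterization of Baire space (it is complete, zero-dimensional, and nowhere compact).

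Next I would set up the inductive step using the three closure lemmas. The key observation is that the classes $\XX_n^{(i)}$ are built from $\Diff_\eta(\mathbf{\Sigma}^0_2)$ levels by the operations of adding a $\sigma$-compact piece, adding a countable piece, adding a complete piece, or passing to a difference hierarchy level — and each of these corresponds to one of Lemmas \ref{semifiltercomplement}, \ref{semifilterrational}, \ref{semifiltertricky} applied to a semifilter already produced at a lower level. Concretely: multiplying by $\QQQ$ (Lemma \ref{semifilterrational}) moves a space from $\XX_n^{(i)}$ to $\XX_{n+1}^{(i)}$ in the appropriate cases (as the diagram $\QQQ\times\TTT\in\XX_3^1$, $\QQQ\times\SSS\in\XX_3^2$ suggests); the operation $X\mapsto\CCC\setminus X$ (Lemma \ref{semifiltercomplement}) toggles between a Wadge class and its dual, which is needed to climb inside a single $\Diff_\eta(\mathbf{\Sigma}^0_2)$ tower; and the tricky operation $X\mapsto(\CCC\times\CCC)\setminus(\Fin\times(\CCC\setminus X))$ of Lemma \ref{semifiltertricky} is precisely the ``$\SU(\mathbf{\Gamma},\mathbf{\Sigma}^0_2)$-type'' construction that increments the difference rank by realizing a space of the form ``$\sigma$-compact-over-$X$'' or ``difference-of-two-copies-of-$X$'', which is how one passes from $\XX_{4k+1}$ up through $\XX_{4k+2}^1$, $\XX_{4k+2}^2$, and beyond. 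I would verify, using Theorem \ref{belowdeltabelongs} (the $\PP_n^{(i)}$-characterization) together with the Wadge-class bookkeeping from Section 7 and the product formulas $\Diff_\eta(\mathbf{\Sigma}^0_2)\times\mathbf{\Sigma}^0_1$-type identities, that each constructed space indeed satisfies the right $\PP_n^{(i)}$ property and is nowhere any strictly smaller property, hence lands in exactly the target class.

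The main obstacle I anticipate is the combinatorial/Wadge-theoretic bookkeeping: one must check that the three concrete constructions genuinely realize \emph{all} the classes $\XX_n^{(i)}$ for $n\in\omega$, in the correct order, and that applying an operation to the unique space of one class yields a space still in $\Delta$ lying in exactly the next class — not skipping a level and not failing to be ``nowhere'' a lower property. This requires knowing precisely how the operations $\CCC\setminus(-)$, $\QQQ\times(-)$, and $(\CCC\times\CCC)\setminus(\Fin\times(\CCC\setminus(-)))$ act on the difference hierarchy $\Diff_\eta(\mathbf{\Sigma}^0_2)$ and on the superscript $i\in\{1,2\}$; the case division into the six families $\PP_{4k},\PP_{4k+1},\PP_{4k+2}^1,\PP_{4k+3}^1,\PP_{4k+2}^2,\PP_{4k+3}^2$ must be matched against a cycle of length (essentially) four in the operations. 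Once the correspondence ``operation $\leftrightarrow$ successor in the $<$-ordering'' is pinned down, the proof is a routine induction: start from the three base semifilters, apply the appropriate lemma at each step, invoke Theorem \ref{belowdelta1} to identify the result with the unique element of $\XX_n^{(i)}$, and conclude that the given $X$, being that unique element, is homeomorphic to a semifilter.
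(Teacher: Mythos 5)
Your proposal follows essentially the same route as the paper: reduce to exhibiting one semifilter in each class $\XX_n^{(i)}$ via Theorems \ref{fromdeltatoX} and \ref{belowdelta1}, start from Proposition \ref{QandQC}, and climb by induction using Lemmas \ref{semifiltercomplement}, \ref{semifilterrational} and \ref{semifiltertricky}, with the bookkeeping delegated to van Engelen's Lemmas 3.4.9, 3.4.11 and 3.4.12. The only adjustments to your sketch are minor: the paper obtains $\XX_0$ (and all $\XX_{4k}$) by complementation rather than as a separate base case, and the ``tricky'' operation is applied to a semifilter in $\XX^i_{4k-1}$ (not $\XX_{4k+1}$) to reach $\XX^i_{4k+2}$, after which $\QQQ\times(-)$ gives $\XX^i_{4k+3}$.
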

\begin{proof}
We will use induction to show that $\XX^{(i)}_n$ contains a semifilter for every $n\in\omega\cup\{-1\}$ and $i\in\{1,2\}$. By Theorems \ref{fromdeltatoX} and \ref{belowdelta1}, this will be enough.

The case $n=-1$ is the base of our induction, and it follows from Proposition \ref{QandQC}. For the inductive step, assume that the claim holds for $\XX^1_{4k-1}$ and $\XX^2_{4k-1}$, where $k\in\omega$ is given. We will show that the claim holds for $\XX_{4k}$, $\XX_{4k+1}$, $\XX^1_{4k+2}$, $\XX^2_{4k+2}$, $\XX^1_{4k+3}$ and $\XX^2_{4k+3}$.

\textbf{Case 1}: $\XX_{4k}$. Fix $i\in\{1,2\}$ (either one will work). By the inductive assumption, there exists a semifilter $\Ss\in\XX^i_{4k-1}$. By \cite[Lemma 3.4.11(b)]{vanengelent}, it follows that $\CCC\setminus\Ss\in\XX_{4k}$.
An application of Lemma \ref{semifiltercomplement} concludes the proof in this case.

\textbf{Case 2}: $\XX_{4k+1}$. As we just showed, there exists a semifilter $\Ss\in\XX_{4k}$. By \cite[Lemma 3.4.9]{vanengelent}, it follows that $\QQQ\times\Ss\in\XX_{4k+1}$.
An application of Lemma \ref{semifilterrational} concludes the proof in this case.

\textbf{Case 3}: $\XX^i_{4k+2}$. Fix $i\in\{1,2\}$. By the inductive hypothesis, there exists a semifilter $\Ss_i\in\XX^i_{4k-1}$. Case 2 of the proof of \cite[Lemma 3.4.12]{vanengelent} shows that $Y_i\in\XX^i_{4k+3}$ whenever $Q$ is a countable dense subspace of $\CCC$ and $X_i\in\XX^i_{4k-1}$ is a dense subspace of $\CCC$, where
$$
Y_i=(\CCC\times \CCC)\setminus (Q\times (\CCC\setminus X_i)).
$$
Therefore, it will be enough to set $Q=\Fin$, $X_i=\Ss_i$, and apply Lemma \ref{semifiltertricky}.

\textbf{Case 4}: $\XX^i_{4k+3}$. Fix $i\in\{1,2\}$. As we just showed, there exists a semifilter $\Ss_i\in\XX^i_{4k+2}$.
By \cite[Lemma 3.4.9]{vanengelent}, it follows that $\QQQ\times\Ss_i\in\XX^i_{4k+3}$.
An application of Lemma \ref{semifilterrational} concludes the proof.
\end{proof}

\section{The classification of homogeneous spaces: the case above $\Delta$}

This section is a minimalist introduction to the classification of the homogeneous zero-dimensional Borel spaces that are not in $\Delta$.
The following are \cite[Definitions 4.3.1 and 4.3.2]{vanengelent}.

\begin{definition}[van Engelen]\label{Pu}
Let $u\in D$. Assume that $X$ is a zero-dimensional space.
\begin{itemize}
\item $X$ is $\PP_u$ if and only if $Y\approx X$ implies $Y\in\mathbf{\Gamma}_u$ for every subspace $Y$ of $\CCC$.
\item $X$ is $\check{\PP}_u$ if and only if $Y\approx X$ implies $Y\in\check{\mathbf{\Gamma}}_u$ for every subspace $Y$ of $\CCC$.
\end{itemize}
\end{definition}

\begin{definition}[van Engelen]\label{YuZu}
Let $u\in D$. Assume that $X$ is a zero-dimensional space.
\begin{itemize}
\item $X\in\YY_u^0$ if and only if $X$ is $\PP_u$, nowhere $\check{\PP}_u$, and first category.
\item $X\in\YY_u^1$ if and only if $X$ is $\PP_u$, nowhere $\check{\PP}_u$, and Baire.
\item $X\in\ZZ_u^0$ if and only if $X$ is $\check{\PP}_u$, nowhere $\PP_u$, and first category.
\item $X\in\ZZ_u^1$ if and only if $X$ is $\check{\PP}_u$, nowhere $\PP_u$, and Baire.
\end{itemize}
\end{definition}

\newpage

The following two results classify the homogeneous zero-dimensional Borel spaces not in $\Delta$.
The first one follows from \cite[Lemma 4.3.5, Lemma 4.3.6, and Theorem 4.3.8]{vanengelent}, and the second one is \cite[Theorem 4.4.4]{vanengelent}.
Recall the following definitions from \cite[page 100]{vanengelent}.
\begin{itemize}
\item $D_0=\{u\in D:\Delta(\Diff^{\CCC}_\omega(\mathbf{\Sigma}^0_2))\subseteq\mathbf{\Gamma}_u\text{ and }u(0)\geq 2\}$.
\item $D_1=\{u\in D_0:u(0)\geq 3\text{ or }t(u)=3\}$.
\end{itemize}

\begin{theorem}[van Engelen]\label{abovedelta1}
If $u\in D_0$ then, up to homeomorphism, both $\YY^0_u$ and $\ZZ^1_u$ contain exactly one element, which is homogeneous.
If $u\in D_1$ then, up to homeomorphism, both $\YY^1_u$ and $\ZZ^0_u$ contain exactly one element, which is homogeneous. 
\end{theorem}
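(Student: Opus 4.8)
The plan is to assemble the statement out of three ingredients from van Engelen's thesis---existence of a canonical representative, uniqueness of its homeomorphism type, and its homogeneity---and then to do the bookkeeping needed to align his formulation in Section 4.3 with Definition \ref{YuZu}.

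First I would settle \emph{existence}. For each $u\in D_0$, the construction lemmas \cite[Lemma 4.3.5 and Lemma 4.3.6]{vanengelent} produce explicit zero-dimensional Borel subspaces of $\CCC$ lying in $\YY^0_u$ and in $\ZZ^1_u$ respectively; under the stronger hypothesis $u\in D_1$ the same machinery additionally yields members of $\YY^1_u$ and of $\ZZ^0_u$. Here one must pay attention to why the extra clause ``$u(0)\geq 3$ or $t(u)=3$'' defining $D_1$ is needed: it is exactly what permits a \emph{Baire} (respectively \emph{first category}) space to be $\PP_u$ and nowhere $\check{\PP}_u$ at the same time, whereas for $u\in D_0\setminus D_1$ the Wadge class $\mathbf{\Gamma}_u$ sits too low for such a space to exist, so only the $\YY^0_u$ and $\ZZ^1_u$ cases are available.

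Next I would invoke \cite[Theorem 4.3.8]{vanengelent} for \emph{uniqueness}: the properties listed in Definition \ref{YuZu} already determine the space up to homeomorphism, so together with existence this pins down a single element in each of the four classes. For \emph{homogeneity} I would observe that each of these classes is closed under passing to nonempty clopen subspaces (the Wadge class $\mathbf{\Gamma}_u$ is closed under clopen subsets, and being nowhere $\check{\PP}_u$, first category, or Baire is inherited by open subspaces), so by uniqueness every nonempty clopen subspace of the canonical representative is homeomorphic to the whole space. Since that representative is Borel but not in $\Delta$, it is not compact; hence Terada's Lemma \ref{pibase} makes it strongly homogeneous, and therefore homogeneous because it is zero-dimensional. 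Alternatively one may verify the hypotheses of van Mill's Theorem \ref{homogeneitylemma} directly.

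The bulk of the genuine difficulty resides in \cite{vanengelent}, so from our standpoint the main obstacle is expository rather than mathematical: matching the notation and indexing of that reference to Definition \ref{YuZu}, and checking that the dichotomy between $D_0$ and $D_1$ is applied to the correct pair of classes. The one point that calls for real care, as opposed to routine translation, is precisely the $D_1$ restriction on $\YY^1_u$ and $\ZZ^0_u$---confirming that there is genuinely nothing to claim for $u\in D_0\setminus D_1$.
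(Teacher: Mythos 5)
The paper offers no independent proof of this statement: it is attributed directly to van Engelen via \cite[Lemma 4.3.5, Lemma 4.3.6, and Theorem 4.3.8]{vanengelent}, and your proposal assembles exactly those same ingredients (existence from Lemmas 4.3.5--4.3.6, uniqueness from Theorem 4.3.8, homogeneity from strong homogeneity of the canonical representatives). So your approach matches the paper's, with some additional and correct commentary on why the $D_1$ restriction is needed for $\YY^1_u$ and $\ZZ^0_u$.
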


\begin{theorem}[van Engelen]\label{abovedeltabelongs}
Let $X$ be a homogeneous zero-dimensional Borel space such that $X\notin\Delta$. Then $X\in\YY^0_u\cup\ZZ^1_u$ for some $u\in D_0$ or $X\in\YY^1_u\cup\ZZ^0_u$ for some $u\in D_1$.
\end{theorem}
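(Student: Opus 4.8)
The plan is to carry out a Wadge-class analysis of $X$, dual to the one behind Theorem~\ref{abovedelta1}. First I would embed $X$ as a Borel subspace of $\CCC$; by Lavrentiev's Theorem any homeomorphism between two copies of $X$ in $\CCC$ extends to a homeomorphism between ambient $G_\delta$ sets, so there is a single countable $\xi$ such that $Y\in\mathbf{\Sigma}^0_\xi(\CCC)$ for every subspace $Y$ of $\CCC$ with $Y\approx X$. By Lemma~\ref{wadge} the Borel Wadge classes $[Y]$, as $Y$ ranges over the copies of $X$ in $\CCC$, are then linearly ordered up to complementation and all contained in the non-self-dual class $\mathbf{\Sigma}^0_\xi$; since the non-self-dual Borel Wadge classes are wellordered by inclusion, I may choose the $\leq_W$-least non-self-dual Borel Wadge class $\mathbf{\Gamma}$ with the property that $[Y]\subseteq\mathbf{\Gamma}$ or $[Y]\subseteq\check{\mathbf{\Gamma}}$ for every such $Y$. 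By Theorem~\ref{louveautheorem} there is $u\in D$ with $\{\mathbf{\Gamma},\check{\mathbf{\Gamma}}\}=\{\mathbf{\Gamma}_u,\check{\mathbf{\Gamma}}_u\}$.

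Next I would use homogeneity to pin $X$ to one side of this dual pair and to rule out the other side locally, that is, to show that $X$ is $\PP_u$ (or $\check{\PP}_u$) and nowhere $\check{\PP}_u$ (respectively nowhere $\PP_u$). The mechanism is that for a homogeneous space the Wadge complexity of a small clopen subspace is, up to passing to the dual, the same at every point, so a homogeneous $\mathbf{\Gamma}_u$-complete subspace of $\CCC$ has Wadge class exactly $\mathbf{\Gamma}_u$ in every clopen piece; combined with the minimality of $\mathbf{\Gamma}_u$ and with van Engelen's analysis, following Louveau, of how the classes $\mathbf{\Gamma}_u$ above $\Delta$ relate to one another (\cite[Section~4.3]{vanengelent}), this forces all copies of $X$ onto the same side and shows that $X$ is, in every clopen piece, genuinely of complexity $\mathbf{\Gamma}_u$ rather than of strictly smaller complexity on the dual side. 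This rigidity is exactly what fails inside $\Delta$, where $\QQQ$, $\QQQ\times\CCC$, $\SSS$ and $\TTT$ exhibit ambiguous Wadge behaviour.

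Now I would split according to Baire category. Since $X$ is homogeneous it is first category or Baire, so: if $X$ is $\PP_u$, nowhere $\check{\PP}_u$, and first category then $X\in\YY^0_u$, and if it is Baire instead then $X\in\YY^1_u$; the two $\check{\PP}_u$ cases give $X\in\ZZ^0_u$ (first category) and $X\in\ZZ^1_u$ (Baire). It remains to locate $u$ correctly. Since $X$ is $\PP_u$ we have $X\in\mathbf{\Gamma}_u$, so the hypothesis $X\notin\Delta$ gives $\mathbf{\Gamma}_u\not\subseteq\Delta(\Diff^{\CCC}_\omega(\mathbf{\Sigma}^0_2))$; as the latter class is self-dual, Lemma~\ref{wadge} yields $\Delta(\Diff^{\CCC}_\omega(\mathbf{\Sigma}^0_2))\subsetneq\mathbf{\Gamma}_u$, and this inclusion also forces $u(0)\geq 2$, so $u\in D_0$, which is everything required in the $\YY^0_u$ and $\ZZ^1_u$ cases. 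In the other two cases (Baire with $X$ being $\PP_u$ and nowhere $\check{\PP}_u$, or first category with $X$ being $\check{\PP}_u$ and nowhere $\PP_u$) one must moreover check $u\in D_1$, i.e.\ $u(0)\geq 3$ or $t(u)=3$; this is precisely van Engelen's dichotomy for these classes (\cite[Lemmas~4.3.5 and~4.3.6]{vanengelent}), to the effect that for $u\in D_0\setminus D_1$ the class $\mathbf{\Gamma}_u$ supports such spaces only on the ``first category side'', so those two cases cannot occur with $u\notin D_1$.

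The hard part is the second paragraph: squeezing out of plain homogeneity the fact that, above $\Delta$, the Wadge complexity of $X$ is a genuine topological invariant, so that $X$ is $\PP_u$ (or $\check{\PP}_u$) for a single $u$ and nowhere the dual. This is not elementary; it rests on the structure theory of the Borel Wadge classes $\mathbf{\Gamma}_u$ due to Louveau and adapted by van Engelen in \cite[Section~4.3]{vanengelent}, and it is exactly the step that separates the behaviour above $\Delta$ from the ambiguity present inside $\Delta$. By contrast, the reduction in the first paragraph and the category split and $D_0$-versus-$D_1$ bookkeeping in the third are routine once that invariance is available.
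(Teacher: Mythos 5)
First, a point of comparison: the paper does not prove this statement at all --- it is imported verbatim as \cite[Theorem 4.4.4]{vanengelent} --- so what you have written is an attempted reconstruction of van Engelen's argument rather than an alternative to anything in this paper. Your overall architecture (isolate a non-self-dual pair $\mathbf{\Gamma}_u,\check{\mathbf{\Gamma}}_u$; show $X$ is $\PP_u$ or $\check{\PP}_u$ and nowhere the dual; split by Baire category; sort $u$ into $D_0$ or $D_1$) does match the architecture of the cited proof. But there is a genuine gap exactly where you locate the difficulty: your second paragraph asserts, rather than proves, that homogeneity forces every copy of $X$ in $\CCC$ onto the same side of the pair and makes every non-empty clopen piece properly of class $\mathbf{\Gamma}_u$. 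That assertion \emph{is} the theorem. Note in particular that your minimal $\mathbf{\Gamma}$ only guarantees that each copy $Y$ lies in $\mathbf{\Gamma}\cup\check{\mathbf{\Gamma}}$, possibly on different sides for different copies; the statement ``$X$ is $\PP_u$'' quantifies over \emph{all} embeddings, and the passage from one embedding to all of them is precisely the invariance result (Lemma \ref{wadgeindependent} in this paper, i.e.\ \cite[Lemma 2.4(c)]{vanengeleni}) whose proof for $X\notin\Delta$ runs \emph{through} Theorem \ref{abovedeltabelongs} and Proposition \ref{belongsgenerate}. Citing ``van Engelen's analysis in Section 4.3'' for this step therefore makes the reconstruction circular. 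The real work here is van Engelen's Lemmas 4.4.1--4.4.3, which combine Steel's theorem (Theorem \ref{steeltheorem}) with the closure properties of the classes $\mathbf{\Gamma}_u$ (such as $\SU(\mathbf{\Sigma}^0_2,\mathbf{\Gamma}_u)=\mathbf{\Gamma}_u$ and the expansion lemmas), and none of that is reproduced or replaced in your sketch.

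There is also a localized error in your third paragraph: the inclusion $\Delta(\Diff^{\CCC}_\omega(\mathbf{\Sigma}^0_2))\subseteq\mathbf{\Gamma}_u$ does \emph{not} by itself force $u(0)\geq 2$. In Louveau's description there are non-self-dual classes of level $1$ (separated-union- and separation-type classes over $\mathbf{\Delta}^0_1$ partitions) sitting arbitrarily high in the Borel Wadge hierarchy, which is exactly why the definition of $D_0$ lists the two conditions separately. The bound $u(0)\geq 2$ for the class generated by a homogeneous, not locally compact space is itself a nontrivial consequence of homogeneity (the first bullet of Lemma \ref{nonselfdual}, again imported from van Engelen), not of the position of $\mathbf{\Gamma}_u$ above $\Delta$. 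The Baire-category split and the $D_0$-versus-$D_1$ bookkeeping via \cite[Lemmas 4.3.5 and 4.3.6]{vanengelent} are fine once the $\PP_u$/$\check{\PP}_u$ dichotomy and $u(0)\geq2$ are actually in hand, but as written the proposal establishes neither.
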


Finally, the following proposition makes explicit which Wadge classes are generated by the spaces considered above.

\begin{proposition}\label{belongsgenerate}
Let $X$ be a subspace of $\CCC$. If $X\in\YY^0_u\cup\YY^1_u$ for some $u\in D_0$ then $[X]=\mathbf{\Gamma}_u$. If $X\in\ZZ^0_u\cup\ZZ^1_u$ for some $u\in D_0$ then $[X]=\check{\mathbf{\Gamma}}_u$.
\end{proposition}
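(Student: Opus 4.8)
The plan is to prove the statement for $X\in\YY^0_u\cup\YY^1_u$; the case $X\in\ZZ^0_u\cup\ZZ^1_u$ is then obtained by exchanging throughout the roles of $\mathbf{\Gamma}_u$ and $\check{\mathbf{\Gamma}}_u$ (equivalently, of the properties $\PP_u$ and $\check\PP_u$), so I will not repeat it; note that one need not worry about $\YY^1_u$ or $\ZZ^0_u$ being empty for $u\in D_0\setminus D_1$, since then there is simply nothing to verify. The only features of $X$ used below are that $X$ is $\PP_u$ and that $X$ is not $\check\PP_u$ (a consequence of being nowhere $\check\PP_u$, applied to the open set $U=X$); these are exactly what $\YY^0_u$ and $\YY^1_u$ have in common by Definition \ref{YuZu}, so the argument does not distinguish the first category from the Baire case.

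The inclusion $[X]\subseteq\mathbf{\Gamma}_u$ is immediate: applying Definition \ref{Pu} with $Y=X$ gives $X\in\mathbf{\Gamma}_u$, and since $\mathbf{\Gamma}_u$ is a Wadge class (Theorem \ref{louveautheorem}, using $u\in D_0\subseteq D$) it is downward closed under $\leq_W$, so every $A\leq_W X$ lies in $\mathbf{\Gamma}_u$. For the reverse inclusion $\mathbf{\Gamma}_u\subseteq[X]$, fix a Borel set $B\subseteq\CCC$ generating $\mathbf{\Gamma}_u$. Both $B$ and $X$ are Borel (the latter because $X\in\mathbf{\Gamma}_u$), so Lemma \ref{wadge} (applied to $B$ and $X$) yields either $B\leq_W X$, in which case $\mathbf{\Gamma}_u=[B]\subseteq[X]$ and we are done, or $X\leq_W\CCC\setminus B$, in which case $X\in[\CCC\setminus B]=\check{\mathbf{\Gamma}}_u$ and hence $X\in\mathbf{\Gamma}_u\cap\check{\mathbf{\Gamma}}_u=\Delta(\mathbf{\Gamma}_u)$. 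So it remains to rule out this last possibility.

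To do so I will use that $X$ is not $\check\PP_u$: there is a subspace $Y$ of $\CCC$ with $Y\approx X$ and $Y\notin\check{\mathbf{\Gamma}}_u$. Since $\PP_u$ is manifestly a topological invariant, $Y$ is $\PP_u$, so $Y\in\mathbf{\Gamma}_u$ and therefore $Y\in\mathbf{\Gamma}_u\setminus\check{\mathbf{\Gamma}}_u$; as in the previous paragraph, Lemma \ref{wadge} then forces $B\leq_W Y$, i.e. $[Y]=\mathbf{\Gamma}_u$, which strictly contains $\Delta(\mathbf{\Gamma}_u)$ because $\mathbf{\Gamma}_u$ is non-self-dual (Theorem \ref{louveautheorem}). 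Thus, to reach the contradiction I need that the homeomorphic copies $X$ and $Y$ generate the same Wadge class; equivalently, that membership of a subspace of $\CCC$ in the self-dual Borel Wadge class $\Delta(\mathbf{\Gamma}_u)$ is invariant under homeomorphism. \emph{This is the main obstacle.} It holds for $\Delta=\Delta(\Diff^{\CCC}_\omega(\mathbf{\Sigma}^0_2))$ essentially by definition, the absolutely $\Diff_\eta(\mathbf{\Sigma}^0_\xi)$ classes being closed under homeomorphism; in general it is part of van Engelen's analysis of Wadge classes in \cite{vanengelent}, and can be extracted from the description of self-dual Borel Wadge classes via the operation $\SU$ in terms of the non-self-dual classes below them (compare also \cite{louveausaintraymond}). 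Once this ingredient is in place the proof is complete: either $[X]=\mathbf{\Gamma}_u$ outright, or $X\in\Delta(\mathbf{\Gamma}_u)$, and in the latter case invariance gives $Y\in\Delta(\mathbf{\Gamma}_u)\subseteq\check{\mathbf{\Gamma}}_u$, contradicting the choice of $Y$.
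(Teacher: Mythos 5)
Your argument is correct and follows essentially the same route as the paper: $[X]\subseteq\mathbf{\Gamma}_u$ because $X$ is $\PP_u$, and the reverse inclusion via Wadge's lemma once one knows $X\notin\check{\mathbf{\Gamma}}_u$. The one step you flag as the main obstacle --- transferring $Y\notin\check{\mathbf{\Gamma}}_u$ from the witnessing copy $Y$ back to $X$ itself (equivalently, invariance of membership in $\Delta(\mathbf{\Gamma}_u)$ under homeomorphism of subspaces of $\CCC$ lying in $\mathbf{\Gamma}_u$) --- is exactly what the paper discharges by citing \cite[Lemma 4.2.16]{vanengelent}, so your proof is complete once that precise reference replaces the informal appeal to van Engelen's analysis.
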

\begin{proof}
Since the other case is similar, we will assume that $X\in\YY^0_u\cup\YY^1_u$ for some $u\in D_0$. This means in particular that $X\in\mathbf{\Gamma}_u$, hence $[X]\subseteq\mathbf{\Gamma}_u$. Since $X$ is not $\check{\PP}_u$, there must be a subspace $Y$ of $\CCC$ such that $Y\approx X$ and $Y\notin\check{\mathbf{\Gamma}}_u$. By \cite[Lemma 4.2.16]{vanengelent}, it follows that $X\notin\check{\mathbf{\Gamma}}_u$. Using Lemma \ref{wadge}, one sees that this implies $\mathbf{\Gamma}_u\subseteq [X]$.
\end{proof}

\section{Some useful results}

In this section, we collect some miscellaneous results that will be useful later. Each one of them was either explicitly stated by van Engelen, or follows easily from his results.

\begin{lemma}\label{nonselfdual}
Assume that $X$ is a homogeneous Borel subspace of $\CCC$ that is not locally compact. Let $\mathbf{\Gamma}=[X]$.
\begin{itemize}
\item Then $\mathbf{\Gamma}\in\{\mathbf{\Gamma}_u,\check{\mathbf{\Gamma}}_u\}$ for some $u\in D$ with $u(0)\geq 2$.
\item If $X\notin\Delta$ then $\mathbf{\Gamma}\in\{\mathbf{\Gamma}_u,\check{\mathbf{\Gamma}}_u\}$ for some $u\in D_0$.
\end{itemize}
In particular, $\mathbf{\Gamma}$ is non-self-dual.
\end{lemma}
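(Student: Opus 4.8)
The plan is to use the classification theorems from Sections 8 and 10 to reduce the statement to the fact that the relevant Wadge classes are non-self-dual, which is Theorem \ref{louveautheorem}. First I would dispose of the case $X\in\Delta$. Since $X$ is homogeneous, not locally compact, and in $\Delta$, Theorem \ref{fromdeltatoX} gives that $X\in\XX^{(i)}_n$ for some $n\in\omega\cup\{-1\}$ and $i\in\{1,2\}$. Embedding $X$ into $\CCC$, I would then invoke the identification of the generated Wadge class at each level of the diagram on page 28 (see \cite[Theorems 4.6.4 and 4.6.5]{vanengelent}): each $\XX^{(i)}_n$ with $n\in\omega\cup\{-1\}$ corresponds, via the class it generates when embedded in $\CCC$, to $\mathbf{\Gamma}_u$ or $\check{\mathbf{\Gamma}}_u$ for a $u\in D$ of the form $u=\xi^\frown 1^\frown\eta^\frown\underline{0}$ with $\xi=2$ and $1\leq\eta<\omega$, since $\XX_\ell(\mathbf{\Sigma}^0_2)$-type classes are exactly the $\Diff^{\CCC}_\eta(\mathbf{\Sigma}^0_2)=\mathbf{\Gamma}_u$. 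In particular $u(0)=2\geq 2$, so the first bullet holds in this case. (One has to be slightly careful about the very bottom of the diagram: for $X\approx\QQQ$, $\QQQ\times\CCC$, $\PPP$ the generated class is $\mathbf{\Gamma}_u$ or $\check{\mathbf{\Gamma}}_u$ with $u$ still having $u(0)=2$, e.g.\ $\QQQ$ generates $\mathbf{\Sigma}^0_2$-type and $\PPP$ generates $\check{\mathbf{\Pi}}^0_2$-type; these are covered by $\xi=2$.)

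Next I would handle $X\notin\Delta$, which is the stronger assertion. Here Theorem \ref{abovedeltabelongs} tells us $X\in\YY^0_u\cup\ZZ^1_u$ for some $u\in D_0$ or $X\in\YY^1_u\cup\ZZ^0_u$ for some $u\in D_1\subseteq D_0$. In either case $u\in D_0$, and by definition $D_0$ requires $u(0)\geq 2$. Now Proposition \ref{belongsgenerate} applies directly: it gives $[X]=\mathbf{\Gamma}_u$ when $X\in\YY^0_u\cup\YY^1_u$ and $[X]=\check{\mathbf{\Gamma}}_u$ when $X\in\ZZ^0_u\cup\ZZ^1_u$. So $\mathbf{\Gamma}=[X]\in\{\mathbf{\Gamma}_u,\check{\mathbf{\Gamma}}_u\}$ with $u\in D_0$, establishing the second bullet, and of course also reconfirming the first bullet in this case since $u\in D_0$ gives $u(0)\geq 2$.

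Finally, for the "in particular" clause I would argue that $\mathbf{\Gamma}_u$ (equivalently $\check{\mathbf{\Gamma}}_u$) is non-self-dual for every $u\in D$. This is exactly what is recorded in the proof of Theorem \ref{louveautheorem}: each $\mathbf{\Gamma}_u$ with $u\in D$ is a non-self-dual Borel Wadge class, so $\check{\mathbf{\Gamma}}_u\neq\mathbf{\Gamma}_u$, and hence $\mathbf{\Gamma}=[X]$ is non-self-dual. I expect the main obstacle to be the first (below-$\Delta$) case, namely pinning down precisely which $u\in D$ is generated by each $\XX^{(i)}_n$ and making sure $u(0)\geq 2$ in all cases, including the degenerate bottom levels $\QQQ,\QQQ\times\CCC,\PPP$; the above-$\Delta$ case is essentially immediate from Proposition \ref{belongsgenerate} and the definition of $D_0$. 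An alternative, cleaner route for the below-$\Delta$ case is to observe that a homogeneous zero-dimensional Borel space $X$ that is not locally compact is either $\sigma$-compact or complete-but-not-$\sigma$-compact or of higher Borel complexity, and in each case the generated class is one of the $\Diff^{\CCC}_\eta(\mathbf{\Sigma}^0_2)$'s or its dual with $\eta\geq 1$, hence $=\mathbf{\Gamma}_u$ with $u=2^\frown 1^\frown\eta^\frown\underline{0}$; this avoids a level-by-level inspection of the diagram.
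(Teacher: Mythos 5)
Your proposal is correct and follows essentially the same route as the paper: the case $X\notin\Delta$ via Theorem \ref{abovedeltabelongs} and Proposition \ref{belongsgenerate}, the case $X\in\Delta$ via Theorem \ref{fromdeltatoX} together with van Engelen's identification of the Wadge classes generated by the classes $\XX_n^{(i)}$ (with the bottom levels $n\in\{-1,0\}$, i.e.\ $\QQQ$, $\QQQ\times\CCC$, $\PPP$, treated separately --- the paper does this explicitly via Lemma \ref{wadge}), and non-self-duality from Theorem \ref{louveautheorem}. No gaps worth noting.
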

\begin{proof}
If $X\notin\Delta$, the desired result follows immediately from Theorem \ref{abovedeltabelongs} and Proposition \ref{belongsgenerate}. Now assume that $X\in\Delta$. Notice that, by Theorem \ref{fromdeltatoX}, we can fix $n\in\{-1\}\cup\omega$ and $i\in\{1,2\}$ such that $X\in\XX_n^{(i)}$. If $n=-1$, then $X$ is a $\mathbf{\Sigma}^0_2$ subspace of $\CCC$ that is not $\mathbf{\Pi}^0_2$, hence $[X]=\Diff_1^{\CCC}(\mathbf{\Sigma}^0_2)=\mathbf{\Gamma}_u$ by Lemma \ref{wadge}, where $u=2^\frown 1^\frown 1^\frown\underline{0}$. If $n=0$, then $X$ is a $\mathbf{\Pi}^0_2$ subspace of $\CCC$ that is not $\mathbf{\Sigma}^0_2$, hence $[X]=\check{\Diff}_1^{\CCC}(\mathbf{\Sigma}^0_2)=\check{\mathbf{\Gamma}}_u$ by Lemma \ref{wadge}, where $u=2^\frown 1^\frown 1^\frown\underline{0}$. If $n>0$, then the desired result follows from \cite[Lemma 4.6.4]{vanengelent}. The fact that $\mathbf{\Gamma}$ is non-self-dual follows from Theorem \ref{louveautheorem}.
\end{proof}

The following is \cite[Lemma 2.4(c)]{vanengeleni}. It shows that, no matter how a zero-dimensional homogeneous Borel space is embedded in $\CCC$, the Wadge class that it generates will remain the same (with the trivial exception of locally compact spaces).

\begin{lemma}[van Engelen]\label{wadgeindependent}
Let $X$ be a homogeneous Borel subspace of $\CCC$ that is not locally compact. Assume that $Y$ is a subspace of $\CCC$ such that $Y\approx X$. Then $X\equiv_W Y$.
\end{lemma}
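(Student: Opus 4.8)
The plan is to combine the two results that immediately precede this lemma, namely Lemma \ref{nonselfdual} and Proposition \ref{belongsgenerate}, together with Wadge's dichotomy (Lemma \ref{wadge}). The key observation is that a homogeneous Borel space that is not locally compact ``remembers'' enough structural information (its place in van Engelen's classification) to pin down its Wadge class up to duality, and then homogeneity rules out the dual. First I would split into the two cases according to whether $X\in\Delta$ or $X\notin\Delta$.

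Suppose first that $X\notin\Delta$. By Theorem \ref{abovedeltabelongs}, $X$ lies in one of the classes $\YY^0_u$, $\ZZ^1_u$ (with $u\in D_0$), $\YY^1_u$, or $\ZZ^0_u$ (with $u\in D_1\subseteq D_0$). Since $X\approx Y$ and all of these classes are defined via properties that are invariant under homeomorphism (the properties $\PP_u$, $\check\PP_u$, ``first category'', ``Baire'', and ``nowhere $\check\PP_u$'' are all topological), $Y$ belongs to exactly the same class as $X$. Now apply Proposition \ref{belongsgenerate} to both $X$ and $Y$: in each case it yields $[X]=[Y]$, where the common value is $\mathbf{\Gamma}_u$ (if the class is a $\YY$-class) or $\check{\mathbf{\Gamma}}_u$ (if it is a $\ZZ$-class). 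Hence $X\equiv_W Y$. The point that needs a sentence of care here is why $X$ and $Y$ land in the \emph{same} class rather than in two classes at the same ``level''; this is because the classification in Theorem \ref{abovedeltabelongs} is into disjoint classes (for a fixed $u$, the four classes $\YY^0_u,\YY^1_u,\ZZ^0_u,\ZZ^1_u$ are mutually exclusive, being distinguished by $\PP_u$ versus $\check\PP_u$ and by category), and membership is a topological invariant.

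Now suppose $X\in\Delta$. By Lemma \ref{nonselfdual} we have $[X]=\mathbf{\Gamma}$ with $\mathbf{\Gamma}\in\{\mathbf{\Gamma}_u,\check{\mathbf{\Gamma}}_u\}$ for some $u\in D$ with $u(0)\geq 2$, and moreover $\mathbf{\Gamma}$ is non-self-dual. Since $Y\approx X$ and $Y$ is also a homogeneous Borel subspace of $\CCC$ that is not locally compact, Lemma \ref{nonselfdual} applies to $Y$ as well, giving $[Y]\in\{\mathbf{\Gamma}_v,\check{\mathbf{\Gamma}}_v\}$. To conclude $[X]=[Y]$ I would argue as follows: $X\in\Delta$ is a topological property, so $Y\in\Delta$; then by Lemma \ref{wadge} applied to $X$ and $Y$ (both Borel subsets of $\CCC$), either $X\leq_W Y$ or $Y\leq_W\CCC\setminus X$. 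In the first case $[X]\subseteq[Y]$; I then need the reverse inclusion, which follows by symmetry (swapping the roles of $X$ and $Y$, using that the hypotheses are symmetric in $X$ and $Y$) unless we are in the ``crossed'' alternative $Y\leq_W\CCC\setminus X$, i.e. $[Y]\subseteq\check{[X]}$. But $[X]$ is non-self-dual, and $X$ and $Y$ are homeomorphic, so — invoking \cite[Lemma 4.2.16]{vanengelent} exactly as in the proof of Proposition \ref{belongsgenerate} — $Y\notin\check{\mathbf{\Gamma}}$ where $\mathbf{\Gamma}=[X]$; this contradicts $[Y]\subseteq\check{\mathbf{\Gamma}}$. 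Hence the crossed alternative is impossible, $[X]\subseteq[Y]$ and symmetrically $[Y]\subseteq[X]$, so $X\equiv_W Y$.

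The main obstacle, and the step I would spend the most care on, is the $X\in\Delta$ case: below $\Delta$ the homeomorphism type is not determined by the Wadge class (as the diagram with $\QQQ$ and $\QQQ\times\CCC$ illustrates), so the argument cannot simply quote a classification theorem the way the $X\notin\Delta$ case does. The right tool is the combination of non-self-duality (from Lemma \ref{nonselfdual}, ultimately Theorem \ref{louveautheorem}) with the transfer principle \cite[Lemma 4.2.16]{vanengelent} that membership in a non-self-dual Wadge class is, for the relevant spaces, a topological invariant — this is precisely what blocks the ``crossed'' Wadge alternative and forces $[X]=[Y]$. Everything else is bookkeeping with topological invariance of the defining properties.
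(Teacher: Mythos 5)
Your proposal is correct in structure and reaches the right conclusion, but the two cases deserve separate comment. For $X\notin\Delta$ your argument is exactly the paper's: Theorem \ref{abovedeltabelongs} places $X$ (and hence $Y$, since the classes $\YY^i_u,\ZZ^i_u$ are defined by topologically invariant properties) in the same class, and Proposition \ref{belongsgenerate} then pins down the common Wadge class. For $X\in\Delta$ you take a genuinely different route: the paper simply appeals to the second part of the proof of Lemma \ref{nonselfdual}, which shows that $[X]$ is completely determined by which class $\XX_n^{(i)}$ the space belongs to (via Theorem \ref{fromdeltatoX}, Lemma \ref{wadge} for $n\in\{-1,0\}$, and \cite[Lemma 4.6.4]{vanengelent} for $n>0$); since $\XX_n^{(i)}$ is closed under homeomorphism, $Y$ lands in the same class and generates the same Wadge class. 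You instead combine non-self-duality with Wadge's dichotomy and rule out the ``crossed'' alternative by invoking \cite[Lemma 4.2.16]{vanengelent} as a transfer principle for homeomorphic subspaces of $\CCC$. That argument is fine \emph{provided} that lemma really applies to every $u\in D$ with $u(0)\geq 2$: the only place the paper uses it (Proposition \ref{belongsgenerate}) the relevant $u$ lies in $D_0$, whereas the classes generated by spaces in $\Delta$ have $u(0)=2$ but $u\notin D_0$, and several nearby closure lemmas (e.g.\ Lemma \ref{GdeltaFsigma}) genuinely require $u\in D_0$. So you should verify the exact hypotheses of \cite[Lemma 4.2.16]{vanengelent} before relying on it below $\Delta$; if it does hold at that level of generality, your argument is arguably cleaner since it treats both cases almost uniformly, and this is essentially how van Engelen's original \cite[Lemma 2.4(c)]{vanengeleni} is proved. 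Two minor points: the statement of Lemma \ref{wadge} in the paper contains a typo (it should read $Y\leq_W\CCC\setminus X$), and you are implicitly using the corrected version; and your ``by symmetry unless we are in the crossed alternative'' sentence conflates the two applications of Wadge's lemma --- you need to rule out the crossed alternative in \emph{each} of the two applications (to $(X,Y)$ and to $(Y,X)$), which your transfer argument does, but the phrasing should be untangled.
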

\begin{proof}
If $X\notin\Delta$, the desired result follows from Theorem \ref{abovedeltabelongs} and Proposition \ref{belongsgenerate}. If $X\in\Delta$, simply observe that $[X]=[Y]$ by the second part of the proof of Lemma \ref{nonselfdual}.
\end{proof}

The next three results clarify the closure properties of certain Wadge classes. The second part of Lemma \ref{separatedunions} is a particular case of \cite[Lemma 2.4(b)]{vanengeleni}. Lemma \ref{deltaxi} follows from \cite[Lemma 4.2.11(b)]{vanengelent}. Lemma \ref{GdeltaFsigma} is \cite[Lemma 4.2.12]{vanengelent}.

\begin{lemma}[van Engelen]\label{separatedunions}
Let $X$ be a homogeneous Borel subspace of $\CCC$ that is not locally compact, and let $\mathbf{\Gamma}=[X]$.
\begin{itemize}
\item Then $\SU(\mathbf{\Sigma}^0_1,\mathbf{\Gamma})=\mathbf{\Gamma}$.
\item If $X$ is first category and $X\notin\Delta$ then $\SU(\mathbf{\Sigma}^0_2,\mathbf{\Gamma})=\mathbf{\Gamma}$.
\end{itemize}
\end{lemma}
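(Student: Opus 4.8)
<br>

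The plan is to prove Lemma \ref{separatedunions} directly from the corresponding closure properties of the non-self-dual Borel Wadge classes $\mathbf{\Gamma}_u$ and $\check{\mathbf{\Gamma}}_u$, which are precisely the classes that arise here by Lemma \ref{nonselfdual}. The first step is to invoke Lemma \ref{nonselfdual}: since $X$ is a homogeneous Borel subspace of $\CCC$ that is not locally compact, we know that $\mathbf{\Gamma}=[X]\in\{\mathbf{\Gamma}_u,\check{\mathbf{\Gamma}}_u\}$ for some $u\in D$ with $u(0)\geq 2$. In particular $\mathbf{\Gamma}$ is a non-self-dual Borel Wadge class that is closed under continuous preimages and which (because $u(0)\geq 2$) lies above $\mathbf{\Sigma}^0_1$ and is not contained in any level that would obstruct the separated-union operation. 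This reduction is what makes the statement tractable: it replaces the hypothesis ``$X$ homogeneous'' by a clean structural description of $\mathbf{\Gamma}$.

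For the first bullet, I would then apply \cite[Lemma 4.2.11(b)]{vanengelent}, which gives exactly that $\SU(\mathbf{\Sigma}^0_1,\mathbf{\Gamma})=\mathbf{\Gamma}$ for every non-self-dual Borel Wadge class $\mathbf{\Gamma}$ that is not contained in $\mathbf{\Sigma}^0_1$ (equivalently, generated by some $u\in D$ with $u(0)\geq 2$ — this is the content of Lemma \ref{deltaxi} as cited in the excerpt, with $\xi=1$). One inclusion, $\mathbf{\Gamma}\subseteq\SU(\mathbf{\Sigma}^0_1,\mathbf{\Gamma})$, is trivial: any $A\in\mathbf{\Gamma}$ equals $A\cap\CCC$, a single term with $W_0=\CCC\in\mathbf{\Sigma}^0_1$. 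The nontrivial inclusion $\SU(\mathbf{\Sigma}^0_1,\mathbf{\Gamma})\subseteq\mathbf{\Gamma}$ is precisely what van Engelen's lemma supplies, using that $\mathbf{\Gamma}$ is closed under countable intersections with closed sets and that its non-self-duality prevents a diagonalization argument from escaping the class.

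For the second bullet, the extra hypotheses ``$X$ first category'' and ``$X\notin\Delta$'' put us, via Lemma \ref{nonselfdual}, in the situation $\mathbf{\Gamma}=[X]\in\{\mathbf{\Gamma}_u,\check{\mathbf{\Gamma}}_u\}$ for some $u\in D_0$; moreover the first-category hypothesis, together with Proposition \ref{belongsgenerate} and the classification in Theorems \ref{abovedeltabelongs} and \ref{abovedelta1}, forces $X\in\YY^0_u\cup\ZZ^0_u$ and hence pins down which of $\mathbf{\Gamma}_u,\check{\mathbf{\Gamma}}_u$ occurs as $[X]$. In either case $\mathbf{\Gamma}$ is a non-self-dual Borel Wadge class with generator of type permitting the $\mathbf{\Sigma}^0_2$-separated-union operation, and the conclusion $\SU(\mathbf{\Sigma}^0_2,\mathbf{\Gamma})=\mathbf{\Gamma}$ is then exactly the particular case of \cite[Lemma 2.4(b)]{vanengeleni} mentioned in the excerpt (again with the trivial inclusion $\mathbf{\Gamma}\subseteq\SU(\mathbf{\Sigma}^0_2,\mathbf{\Gamma})$ handled as above).

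The main obstacle is not the Wadge-theoretic computation, which is entirely delegated to van Engelen's lemmas, but checking that the hypotheses of those lemmas are genuinely met — in particular, verifying in the second bullet that ``first category and not in $\Delta$'' rules out the classes $\YY^1_u,\ZZ^1_u$ and lands us among the $\mathbf{\Gamma}_u$ with $u\in D_0$ for which the $\mathbf{\Sigma}^0_2$-version holds, rather than some exceptional class where separated unions of $\mathbf{\Sigma}^0_2$ pieces could leave the class. This is a matter of carefully matching the side conditions ($u(0)\geq 2$ versus $u(0)\geq 3$ or $t(u)=3$, membership in $D_0$ versus $D_1$) against the statements of \cite[Lemmas 2.4(b) and 4.2.11(b)]{vanengelent}, rather than any new argument.
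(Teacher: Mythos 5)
Your proposal is correct and follows essentially the same route as the paper: reduce to $\mathbf{\Gamma}\in\{\mathbf{\Gamma}_u,\check{\mathbf{\Gamma}}_u\}$ via Lemma \ref{nonselfdual} (respectively Theorem \ref{abovedeltabelongs} together with Proposition \ref{belongsgenerate}), use the first-category hypothesis to rule out $\YY^1_u$ and $\ZZ^1_u$, and delegate the closure computations to van Engelen. The only slips are citation labels: the $\SU(\mathbf{\Sigma}^0_1,\cdot)$ closure comes from van Engelen's Lemma 4.2.11(a) with $\xi=u(0)\geq 2$ (not 4.2.11(b), which is the $\mathbf{\Delta}^0_2$-intersection fact recorded here as Lemma \ref{deltaxi}), and in the $\ZZ^0_u$ case one has $u\in D_1$, where the paper invokes van Engelen's Corollary 4.2.14 using the condition $u(0)\geq 3$ or $t(u)=3$ -- exactly the side conditions you flagged as needing verification.
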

\begin{proof} 
By Lemma \ref{nonselfdual}, we have $\mathbf{\Gamma}\in\{\mathbf{\Gamma}_u,\check{\mathbf{\Gamma}}_u\}$ for some $u\in D$ with $u(0)\geq 2$. Therefore, it follows from \cite[Lemma 4.2.11(a)]{vanengelent} that $\SU(\mathbf{\Sigma}^0_1,\mathbf{\Gamma})\subseteq \SU(\mathbf{\Sigma}^0_\xi,\mathbf{\Gamma})=\mathbf{\Gamma}$, where $\xi=u(0)$. The trivial fact that $\mathbf{\Gamma}\subseteq \SU(\mathbf{\Sigma}^0_1,\mathbf{\Gamma})$ concludes the proof of the first statement.

Now assume that $X$ is first category and $X\notin\Delta$. By Theorem \ref{abovedeltabelongs}, either there exists $u\in D_0$ such that $X\in\YY_u^0\cup\ZZ_u^1$ or there exists $u\in D_1$ such that $X\in\YY_u^1\cup\ZZ_u^0$. By Definition \ref{YuZu}, the cases $X\in\ZZ_u^1$ and $X\in\YY_u^1$ are impossible. First assume that $X\in\YY_u^0$ for some $u\in D_0$. Observe that $\mathbf{\Gamma}=\mathbf{\Gamma}_u$ by Proposition \ref{belongsgenerate}. It follows from \cite[Lemma 4.2.11(a)]{vanengelent} that $\SU(\mathbf{\Sigma}^0_2,\mathbf{\Gamma})=\mathbf{\Gamma}$. Finally, assume that $X\in\ZZ_u^0$ for some $u\in D_1$. Observe that $\mathbf{\Gamma}=\check{\mathbf{\Gamma}}_u$ by Proposition \ref{belongsgenerate}. Furthermore, by the definition of $D_1$, either $u(0)\geq 3$ or $u(0)=2$ and $t(u)=3$. By \cite[Corollary 4.2.14]{vanengelent}, it follows that $\SU(\mathbf{\Sigma}^0_2,\mathbf{\Gamma})=\mathbf{\Gamma}$.
\end{proof}

\begin{lemma}[van Engelen]\label{deltaxi}
Assume that $\mathbf{\Gamma}\in\{\mathbf{\Gamma}_u,\check{\mathbf{\Gamma}}_u\}$ for some $u\in D$ such that $u(0)\geq 2$. If $X\in\mathbf{\Gamma}$ and $H\in\mathbf{\Delta}^0_2$ then $X\cap H\in\mathbf{\Gamma}$.
\end{lemma}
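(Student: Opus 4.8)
The plan is to deduce this directly from \cite[Lemma 4.2.11(b)]{vanengelent}, which asserts that $\mathbf{\Gamma}_u$ is closed under both intersection and union with $\mathbf{\Delta}^0_{u(0)}$ subsets of $\CCC$. First I would observe that, since $u(0)\geq 2$, every $\mathbf{\Delta}^0_2$ subset of $\CCC$ is also $\mathbf{\Delta}^0_{u(0)}$; so it is enough to show that $\mathbf{\Gamma}$ is closed under intersection with $\mathbf{\Delta}^0_{u(0)}$ subsets of $\CCC$. In the case $\mathbf{\Gamma}=\mathbf{\Gamma}_u$ this is precisely the intersection half of the cited lemma, applied to $X\in\mathbf{\Gamma}_u$ and $H\in\mathbf{\Delta}^0_2\subseteq\mathbf{\Delta}^0_{u(0)}$, so nothing further is needed in that case.

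It remains to treat the dual case $\mathbf{\Gamma}=\check{\mathbf{\Gamma}}_u$, and here I would argue by complementation. Given $X\in\check{\mathbf{\Gamma}}_u$ and $H\in\mathbf{\Delta}^0_2$, we have $\CCC\setminus X\in\mathbf{\Gamma}_u$ and, since $\mathbf{\Delta}^0_2$ is self-dual, $\CCC\setminus H\in\mathbf{\Delta}^0_2\subseteq\mathbf{\Delta}^0_{u(0)}$. Applying the union half of \cite[Lemma 4.2.11(b)]{vanengelent} gives $(\CCC\setminus X)\cup(\CCC\setminus H)\in\mathbf{\Gamma}_u$, and by De Morgan this set is exactly $\CCC\setminus(X\cap H)$. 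Hence $X\cap H\in\check{\mathbf{\Gamma}}_u=\mathbf{\Gamma}$, as required.

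I do not expect any genuine obstacle here: once \cite[Lemma 4.2.11(b)]{vanengelent} is invoked, the argument is pure bookkeeping. The only point that requires a moment's attention is that the cited lemma is most naturally read as a statement about $\mathbf{\Gamma}_u$, whereas the conclusion is needed for both $\mathbf{\Gamma}_u$ and $\check{\mathbf{\Gamma}}_u$; this is exactly why the union-closure half of that lemma is used (fed through De Morgan and the self-duality of $\mathbf{\Delta}^0_2$) rather than the intersection-closure half alone.
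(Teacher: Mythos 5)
Your proposal is correct and matches the paper's approach: the paper simply states that the lemma ``follows from \cite[Lemma 4.2.11(b)]{vanengelent}'', which is exactly the citation you invoke, and your De Morgan argument for the dual class $\check{\mathbf{\Gamma}}_u$ (together with the observation that $\mathbf{\Delta}^0_2\subseteq\mathbf{\Delta}^0_{u(0)}$ when $u(0)\geq 2$) is the routine bookkeeping the paper leaves implicit.
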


\begin{lemma}[van Engelen]\label{GdeltaFsigma}
Assume that $\mathbf{\Gamma}\in\{\mathbf{\Gamma}_u,\check{\mathbf{\Gamma}}_u\}$ for some $u\in D_0$.
\begin{itemize}
\item If $X\in\mathbf{\Gamma}$ and $G$ is a $\mathbf{\Pi}^0_2$ subset of $\CCC$ then $X\cap G\in\mathbf{\Gamma}$.
\item If $X\in\mathbf{\Gamma}$ and $F$ is a $\mathbf{\Sigma}^0_2$ subset of $\CCC$ then $X\cup F\in\mathbf{\Gamma}$.
\end{itemize}
\end{lemma}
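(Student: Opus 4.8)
The plan is to first reduce to the first item by duality: if $F\in\mathbf{\Sigma}^0_2$ then $\CCC\setminus F\in\mathbf{\Pi}^0_2$ and $\CCC\setminus(X\cup F)=(\CCC\setminus X)\cap(\CCC\setminus F)$, and since $\mathbf{\Gamma}\in\{\mathbf{\Gamma}_u,\check{\mathbf{\Gamma}}_u\}$ forces $\check{\mathbf{\Gamma}}\in\{\mathbf{\Gamma}_u,\check{\mathbf{\Gamma}}_u\}$ as well, it suffices to prove the first item for every class in $\{\mathbf{\Gamma}_u,\check{\mathbf{\Gamma}}_u\}$. So suppose $X\in\mathbf{\Gamma}$ and $G\in\mathbf{\Pi}^0_2$ with $G\neq\CCC$; I want $X\cap G\in\mathbf{\Gamma}$. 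I would first record that $\mathbf{\Gamma}$ is large: since $u\in D_0$ and both $\mathbf{\Sigma}^0_2(\CCC)=\Diff^{\CCC}_1(\mathbf{\Sigma}^0_2)$ and $\mathbf{\Pi}^0_2(\CCC)=\check{\Diff}^{\CCC}_1(\mathbf{\Sigma}^0_2)$ are contained in $\Delta(\Diff^{\CCC}_\omega(\mathbf{\Sigma}^0_2))\subseteq\mathbf{\Gamma}_u$, and since $\mathbf{\Sigma}^0_2$ and $\mathbf{\Pi}^0_2$ are self-complementary as classes, we get $\mathbf{\Sigma}^0_2,\mathbf{\Pi}^0_2\subseteq\mathbf{\Gamma}$. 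Also $u(0)\geq 2$, so by Lemma \ref{deltaxi} the class $\mathbf{\Gamma}$ is closed under intersection with $\mathbf{\Delta}^0_2$ sets, and by the argument in the proof of Lemma \ref{separatedunions} it is closed under separated unions with pairwise disjoint $\mathbf{\Sigma}^0_{u(0)}$ pieces, hence in particular with $\mathbf{\Sigma}^0_2$ pieces.

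To find the right construction I would look first at the model case $\mathbf{\Gamma}=\Diff^{\CCC}_\eta(\mathbf{\Sigma}^0_2)$ with $\eta$ even. If $X=\Diff(\langle A_\zeta:\zeta<\eta\rangle)$ with the $A_\zeta$ increasing and $\mathbf{\Sigma}^0_2$, then one checks directly that
\[
X\cap G=\Diff\bigl(\langle\,A_\zeta\cup(\CCC\setminus G):\zeta<\eta\,\rangle\bigr),
\]
and each $A_\zeta\cup(\CCC\setminus G)$ is again $\mathbf{\Sigma}^0_2$ because $\CCC\setminus G\in\mathbf{\Sigma}^0_2$: throwing $\CCC\setminus G$ in at the very bottom forces every point outside $G$ to have first-entry stage $0$, hence to fall out of the difference, while points of $G$ keep their old first-entry stage. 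For the remaining cases this same substitution inserts $\CCC\setminus G$ at a stage of the wrong parity, and one instead needs the finer fact that $\CCC\setminus G$ is not merely $\mathbf{\Sigma}^0_2$ but a countable increasing union $\bigcup_m K_m$ of \emph{closed} sets: while $A\setminus G$ need not be $\mathbf{\Sigma}^0_2$, each $A\setminus K_m=A\cap(\CCC\setminus K_m)$ is, so closed chunks of $\CCC\setminus G$ may be legally removed from, and later re-added to, $\mathbf{\Sigma}^0_2$ ingredients. The required sequence can then be assembled by interleaving the ingredients witnessing $X\in\mathbf{\Gamma}$ with these successive removals and re-insertions along $\langle K_m\rangle$, arranged so that the first-entry stages come out with the parity that excludes the points of $\CCC\setminus G$.

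For a general $u\in D_0$ the class $\mathbf{\Gamma}_u$ is not a difference class, but it is built by Louveau's operations (transfinite differences and the operation $\SU(\mathbf{\Sigma}^0_{u(0)},\cdot)$, as encoded in the definition of $D$) out of classes of base level at least $2$. The plan is to push the substitution above through this construction by induction on the length of $u$: at each $\mathbf{\Sigma}^0_{u(0)}$ ingredient and each $\mathbf{\Sigma}^0_{u(0)}$-witnessing sequence of an $\SU$, absorb $\CCC\setminus G$ (or, where the parity demands it, interleave the removals of the $K_m$); since $\CCC\setminus G\in\mathbf{\Sigma}^0_2\subseteq\mathbf{\Sigma}^0_{u(0)}$, $G\in\mathbf{\Pi}^0_2\subseteq\mathbf{\Gamma}$, and $\mathbf{\Gamma}$ is closed under $\cap\,\mathbf{\Delta}^0_2$ and under separated $\mathbf{\Sigma}^0_2$-unions, each such local move stays inside the relevant class. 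The hard part will be the bookkeeping: one must keep precise track of which operation produces $\mathbf{\Gamma}_u$ from smaller classes and of the order types and parities involved, so that every local move is legal and the inductive hypothesis applies, and it is exactly the membership $u\in D_0$ (together with the case analysis on the type $t(u)\in\{0,1,2,3\}$) that keeps all of this legal. This bookkeeping is carried out in \cite[Lemma 4.2.12]{vanengelent}. Once the first item is established for both $\mathbf{\Gamma}_u$ and $\check{\mathbf{\Gamma}}_u$, the second follows by the duality of the first paragraph.
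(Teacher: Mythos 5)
Your proposal is correct and ultimately rests on the same source as the paper, which states this lemma without proof as a direct citation of \cite[Lemma 4.2.12]{vanengelent}; your duality reduction of the second item to the first and your substitution $A_\zeta\mapsto A_\zeta\cup(\CCC\setminus G)$ in the even-length difference case are both sound, and you correctly identify that the general case (odd parities and the full Louveau-style description of $\mathbf{\Gamma}_u$ for $u\in D_0$) is exactly the bookkeeping carried out in the cited lemma. No gap relative to what the paper itself provides.
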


The following result is essentially \cite[Lemma 2.11]{vanengeleni}.

\begin{lemma}[van Engelen]\label{closedwadge}
Let $X$ and $Y$ be Borel subspaces of $\CCC$ such that $X$ is a closed subset of $Y$. Let $\mathbf{\Gamma}=[X]$, and assume that $\mathbf{\Gamma}\in\{\mathbf{\Gamma}_u,\check{\mathbf{\Gamma}}_u\}$ for some $u\in D$ such that $u(0)\geq 2$. Then $X\leq_W Y$.
\end{lemma}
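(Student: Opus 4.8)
The statement to prove is Lemma~\ref{closedwadge}: if $X$ is a closed subspace of a Borel subspace $Y$ of $\CCC$, and $\mathbf{\Gamma}=[X]\in\{\mathbf{\Gamma}_u,\check{\mathbf{\Gamma}}_u\}$ for some $u\in D$ with $u(0)\geq 2$, then $X\leq_W Y$. The plan is to argue by contradiction using Wadge's Lemma (Lemma~\ref{wadge}). Suppose $X\not\leq_W Y$; then since both are Borel, $Y\leq_W \CCC\setminus Y$, i.e.\ $Y\in\check{\mathbf{\Gamma}}_Y$ where one gets that $\CCC\setminus Y$ is at least as complex as $Y$. More precisely, applying Wadge's Lemma and the fact that $[X]=\mathbf{\Gamma}$ is non-self-dual (which holds since $\mathbf{\Gamma}\in\{\mathbf{\Gamma}_u,\check{\mathbf{\Gamma}}_u\}$ and such classes are non-self-dual by Theorem~\ref{louveautheorem}), one deduces that $Y\in\check{\mathbf{\Gamma}}$, the dual class. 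The goal is then to derive a contradiction with $X$ being a \emph{closed} subspace of $Y$ and $X$ generating $\mathbf{\Gamma}$.

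First I would fix $\mathbf{\Gamma}=[X]$ and use Wadge's Lemma in the form: for Borel subsets of $\CCC$, either $X\leq_W Y$ or $Y\leq_W\CCC\setminus X$. If $X\not\leq_W Y$, then $Y\leq_W\CCC\setminus X$, so $Y\in\check{\mathbf{\Gamma}}$ (the class $[\CCC\setminus X]=\check{[X]}=\check{\mathbf{\Gamma}}$). Now the key point is that $X$, being closed in $Y$, equals $Y\cap F$ for some closed subset $F$ of $\CCC$. Since closed sets are $\mathbf{\Delta}^0_2$, and since $Y\in\check{\mathbf{\Gamma}}$ with $\check{\mathbf{\Gamma}}\in\{\mathbf{\Gamma}_u,\check{\mathbf{\Gamma}}_u\}$ still having first coordinate $u(0)\geq 2$, Lemma~\ref{deltaxi} applies (with the roles set so that $\check{\mathbf{\Gamma}}$ plays the role of ``$\mathbf{\Gamma}$'' in that lemma): intersecting a member of $\check{\mathbf{\Gamma}}$ with a $\mathbf{\Delta}^0_2$ set keeps it in $\check{\mathbf{\Gamma}}$. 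Hence $X=Y\cap F\in\check{\mathbf{\Gamma}}$. But $X$ generates $\mathbf{\Gamma}$, so $X\in\mathbf{\Gamma}$; combined with $X\in\check{\mathbf{\Gamma}}$ this gives $X\in\mathbf{\Gamma}\cap\check{\mathbf{\Gamma}}=\Delta(\mathbf{\Gamma})$, which forces $[X]$ to be self-dual --- contradicting the non-self-duality of $\mathbf{\Gamma}$. Therefore $X\leq_W Y$.

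The main obstacle I anticipate is handling the bookkeeping around which of $\mathbf{\Gamma}_u$ or $\check{\mathbf{\Gamma}}_u$ is which, and making sure Lemma~\ref{deltaxi} is being invoked for a class that genuinely satisfies its hypothesis ``$\mathbf{\Gamma}\in\{\mathbf{\Gamma}_u,\check{\mathbf{\Gamma}}_u\}$ with $u(0)\geq 2$'': since $\{\mathbf{\Gamma}_u,\check{\mathbf{\Gamma}}_u\}$ is closed under taking duals, $\check{\mathbf{\Gamma}}$ lies in the same pair and has the same associated $u$, so the hypothesis transfers automatically --- but this should be stated carefully. A second, more minor subtlety is justifying that a closed subset of $Y$ is the trace on $Y$ of a closed subset of $\CCC$ (immediate from the subspace topology) and that closed subsets of $\CCC$ are $\mathbf{\Delta}^0_2$ (they are $\mathbf{\Pi}^0_1\subseteq\mathbf{\Delta}^0_2$). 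Finally, one should double-check that $u(0)\geq 2$ rules out the degenerate low-level classes where the argument via Lemma~\ref{deltaxi} might fail; this is exactly why the hypothesis is stated as it is.
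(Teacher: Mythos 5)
Your proposal is correct and follows essentially the same route as the paper: contradiction via Wadge's Lemma to get $Y\in\check{\mathbf{\Gamma}}$, then writing $X=Y\cap H$ with $H$ closed (hence $\mathbf{\Delta}^0_2$) and invoking Lemma~\ref{deltaxi} to conclude $X\in\check{\mathbf{\Gamma}}$, contradicting non-self-duality of $\mathbf{\Gamma}$. The extra care you take about the hypothesis of Lemma~\ref{deltaxi} transferring to the dual class is sound and only makes explicit what the paper leaves implicit.
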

\begin{proof}
In order to get a contradiction, suppose that $X\not\leq_W Y$. Then $Y\leq_W \CCC\setminus X$ by Lemma \ref{wadge}, hence $Y\in\check{\mathbf{\Gamma}}$. Since $X=Y\cap H$ for some closed subset $H$ of $\CCC$, it follows from Lemma \ref{deltaxi} that $X\in\check{\mathbf{\Gamma}}$, which contradicts the fact that $\mathbf{\Gamma}$ is non-self-dual.
\end{proof}

The following theorem shows that, no matter how one embeds a homogeneous zero-dimensional Borel space $X$ into $\CCC$, the embedding will be extremely nice, provided it is a dense embedding and $X\notin\Delta$. It is inspired by the notion of being h-homogeneously embedded in a space, which first appeared in \cite[Definition 7.1]{medinic}. Notice that, by \cite[Theorem 6]{medinivanmillzdomskyy}, the assumption that $X$ is Borel cannot be dropped in Corollary \ref{homogeneouscomplement}.

\begin{theorem}\label{hhomogeneouslyembedded}
Let $X$ be a homogeneous Borel dense subspace of $\CCC$. Assume that $X\notin\Delta$. Then, for every non-empty clopen subset $U$ of $\CCC$, there exists a homeomorphism $h_U:\CCC\longrightarrow U$ such that $h_U[X]=U\cap X$.
\end{theorem}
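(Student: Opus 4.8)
The plan is to reduce the statement to a standard recursive back-and-forth construction on the tree $2^{<\omega}$, where the Wadge-invariance results collected in Section 11 supply exactly the bookkeeping we need. Fix a non-empty clopen $U=[s]$ for some $s\in 2^{<\omega}$; by reindexing coordinates we may as well build a homeomorphism $h:\CCC\longrightarrow U$ with $h[X]=U\cap X$ for $U=[s]$, and it suffices to treat one fixed $s$. The idea is to construct, by recursion on $n\in\omega$, a scheme $\langle t_\sigma:\sigma\in 2^n\rangle$ of pairwise disjoint non-empty clopen subsets of $\CCC$ with $t_\varnothing=\CCC$, $\diam(t_\sigma)\to 0$ along every branch, $t_{\sigma^\frown 0}\cup t_{\sigma^\frown 1}\subseteq t_\sigma$, such that $\CCC=\bigcup\{t_\sigma:\sigma\in 2^n\}$ at each level, and a parallel scheme $\langle v_\sigma:\sigma\in 2^n\rangle$ of pairwise disjoint non-empty clopen subsets of $U$ with $v_\varnothing=U$ and the same tree-coherence and shrinking properties; the map $h$ that sends the unique point of $\bigcap_n t_{x\re n}$ to the unique point of $\bigcap_n v_{x\re n}$ will then be the desired homeomorphism of $\CCC$ onto $U$. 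The crucial extra requirement, maintained throughout the recursion, is that each piece of the domain scheme and the corresponding piece of the range scheme carry Wadge-equivalent traces of $X$: $t_\sigma\cap X\equiv_W v_\sigma\cap X\equiv_W X$.

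The first step is to record the closure facts that make this requirement propagable. Let $\mathbf{\Gamma}=[X]$. By Lemma~\ref{wadgeindependent}, since $X$ is Borel, homogeneous, and not locally compact (being dense in $\CCC$ and not in $\Delta$, it is certainly not locally compact), every copy of $X$ inside $\CCC$ generates $\mathbf{\Gamma}$; in particular $t_\sigma\cap X$ and $v_\sigma\cap X$ will generate $\mathbf{\Gamma}$ as soon as we know they are homeomorphic to $X$. By Lemma~\ref{nonselfdual} we have $\mathbf{\Gamma}\in\{\mathbf{\Gamma}_u,\check{\mathbf{\Gamma}}_u\}$ for some $u\in D_0$ (using the second clause, as $X\notin\Delta$), and $\mathbf{\Gamma}$ is non-self-dual. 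The engine of the recursion is the following splitting claim: if $W\subseteq\CCC$ is a non-empty clopen set with $W\cap X\equiv_W X$, then $W$ can be partitioned into countably many non-empty clopen pieces $W=\bigcup_n W_n$ of vanishing diameter with $W_n\cap X\equiv_W X$ for every $n$. Indeed, split $W$ arbitrarily into small clopen pieces $W_n$; by Lemma~\ref{separatedunions} (first clause), $W\cap X=\bigcup_n(W_n\cap X)$ is a separated union in $\mathbf{\Gamma}$, so each $W_n\cap X\in\mathbf{\Gamma}$; since $X\le_W W\cap X$ and $W_n$ is clopen in $W$, applying Lemma~\ref{closedwadge} (with $X$ playing the role of the closed subspace, $W_n\cap X$ of the ambient space — each $W_n\cap X$ is homeomorphic to $X$ being a clopen piece that... ) actually the cleanest route is: $W_n\cap X$ is a relatively clopen subset of the copy $W\cap X$ of $X$, and by homogeneity plus Lemma~\ref{closedwadge} applied inside $W\cap X$ one gets $W\cap X\le_W W_n\cap X$, whence $W_n\cap X\equiv_W X$; alternatively one invokes that $X$ embeds densely so each non-empty relatively clopen piece contains a copy of $X$. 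With this splitting claim in hand, the recursion is routine: at stage $n$, having produced $t_\sigma$ and $v_\sigma$ for $\sigma\in 2^{n}$ with matching Wadge types, split each $t_\sigma$ into two (or use a fresh partition into two nonempty clopen halves of smaller diameter, re-indexed so that the whole level still covers $\CCC$) obtaining $t_{\sigma^\frown 0},t_{\sigma^\frown 1}$, likewise split $v_\sigma$, and by the claim the children again carry Wadge type $\mathbf{\Gamma}$.

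The main obstacle — and the only place any real content enters — is the splitting claim, i.e.\ verifying that a relatively clopen piece $W_n\cap X$ of a copy of $X$ is again Wadge-equivalent to $X$, uniformly. That $W_n\cap X\in\mathbf{\Gamma}$ is the ``easy'' direction and is exactly the separated-unions statement Lemma~\ref{separatedunions}; the reverse Wadge-reduction $X\le_W W_n\cap X$ is where we need Lemma~\ref{closedwadge} together with non-self-duality of $\mathbf{\Gamma}$ (Lemma~\ref{nonselfdual}) and the homogeneity of $X$ (so that $W_n\cap X$ is actually homeomorphic to $X$, via Corollary~\ref{borelsemifilterstronglyhomogeneous}-style strong homogeneity or directly from van Engelen's classification that $X$ is h-homogeneous). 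Once the Wadge type is certified to persist under clopen subdivision, the back-and-forth is purely formal: take $h(x)$ to be the unique point of $\bigcap_n v_{x\re n}$; continuity and bijectivity follow from the vanishing-diameter and disjoint-cover conditions, $h$ is a homeomorphism since $\CCC$ is compact, and $h[X]=U\cap X$ because $x\in X$ iff $x\in t_{x\re n}\cap X$ for all $n$ iff (tracking through the scheme) $h(x)\in v_{x\re n}\cap X$ for all $n$ iff $h(x)\in U\cap X$ — the last equivalence using that $\bigcap_n v_{x\re n}\cap X$ is non-empty precisely when $x\in X$, which is guaranteed by maintaining $t_\sigma\cap X\ne\varnothing$ and $v_\sigma\cap X\ne\varnothing$ at every node and arranging the schemes to be ``$X$-faithful'' in the obvious sense. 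I would write $h_U:=h$ and note that the construction is symmetric in $\CCC$ and $U$, so nothing is lost in assuming $U=[s]$.
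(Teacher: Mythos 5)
Your construction has a genuine gap at the final step, and it is not a repairable technicality. The invariant you maintain along the schemes, namely $t_\sigma\cap X\equiv_W v_\sigma\cap X\equiv_W X$ at every node, gives no control whatsoever over which individual points of $\bigcap_n t_{x\re n}$ and $\bigcap_n v_{x\re n}$ belong to $X$. Since $X$ is dense in $\CCC$, \emph{every} non-empty clopen set meets $X$, so the conditions $t_\sigma\cap X\neq\varnothing$ and $v_\sigma\cap X\neq\varnothing$ hold automatically and are vacuous; in particular they do not imply that the unique point of $\bigcap_n v_{x\re n}$ lies in $X$ exactly when the unique point of $\bigcap_n t_{x\re n}$ does. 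The phrase ``arranging the schemes to be $X$-faithful in the obvious sense'' is exactly where the whole difficulty of the theorem is hidden: producing an ambient homeomorphism of $\CCC$ matching up two sets that merely generate the same Wadge class (and have the same Baire category) is the content of Steel's theorem (Theorem \ref{steeltheorem}), which is proved by a game-theoretic argument resting on Borel determinacy, not by a diameter-shrinking back-and-forth. Your ``splitting claim'' (that clopen traces of $X$ retain the Wadge class $\mathbf{\Gamma}=[X]$, properly) is correct and is in fact carried out in the paper inside the proof of Theorem \ref{steelconsequence}; but it only verifies that $X$ is everywhere properly $\mathbf{\Gamma}$, i.e.\ the \emph{hypothesis} of Steel's theorem, and is no substitute for the theorem itself.

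The paper's argument routes around all of this. Since $X\notin\Delta$, Theorem \ref{abovedeltabelongs} places $X$ in $\YY^0_u\cup\YY^1_u\cup\ZZ^0_u\cup\ZZ^1_u$ for some $u\in D_0$, so van Engelen's Theorem 4.3.9 (itself a consequence of Steel's theorem) applies: any dense subspace $Y$ of $\CCC$ with $Y\approx X$ is the image of $X$ under an autohomeomorphism of $\CCC$. Moreover $X$ is strongly homogeneous by van Engelen's Corollary 4.4.6, so $U\cap X\approx X$, and $U\cap X$ is dense in $U\approx\CCC$; identifying $U$ with $\CCC$ and applying the uniqueness-of-dense-embeddings statement yields $h_U$. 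If you wish to avoid citing Theorem 4.3.9, you must still invoke Theorem \ref{steeltheorem} at the corresponding point of your argument (after checking that $X$ and a dense copy of $U\cap X$ are everywhere properly $\mathbf{\Gamma}$ and of the same Baire category); note that you cannot instead quote Theorem \ref{steelconsequence}, whose proof in the paper already uses the present theorem. There is no elementary, determinacy-free replacement for this step.
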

\begin{proof}
Since $X\notin\Delta$, it follows from Theorem \ref{abovedeltabelongs} that $X\in\YY^0_u\cup\YY^1_u\cup\ZZ^0_u\cup\ZZ^1_u$ for some $u\in D_0$. Therefore, by \cite[Theorem 4.3.9]{vanengelent}, if $Y$ is a dense subspace of $\CCC$ such that $Y\approx X$, then there exists a homeomorphism $h:\CCC\longrightarrow\CCC$ such that $h[X]=Y$. Furthermore, since $X$ is not locally compact, it follows from \cite[Corollary 4.4.6]{vanengelent} that $X$ is strongly homogeneous.

Now let $U$ be a non-empty clopen subset of $\CCC$. Notice that $U\approx\CCC$ and $U\cap X$ is a dense subspace of $U$. Furthermore, the fact that $X$ is strongly homogeneous
implies that $U\cap X\approx X$. Hence, the existence of the desired homeomorphism follows from the observations in the first part of this proof.
\end{proof}

\begin{corollary}\label{homogeneouscomplement}
Let $X$ be a homogeneous Borel dense subspace of $\CCC$. Assume that $X\notin\Delta$. Then $\CCC\setminus X$ is homogeneous.
\end{corollary}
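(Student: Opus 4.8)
The plan is to use Theorem \ref{hhomogeneouslyembedded} to show that $\CCC\setminus X$ is strongly homogeneous, and then appeal to the fact (recalled in Section 2) that every zero-dimensional strongly homogeneous space is homogeneous. Since $\CCC\setminus X$ is visibly zero-dimensional, the whole argument reduces to verifying strong homogeneity, for which Terada's Lemma \ref{pibase} is the natural tool.

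First I would rule out the two trivial cases. If $\CCC\setminus X=\varnothing$ then $X=\CCC$ is a clopen subspace of $\CCC$, hence $X\in\Delta$, contrary to hypothesis. If $\CCC\setminus X$ is non-empty and compact, then it is closed in $\CCC$, so $X$ is an open subspace of $\CCC$; but an open subspace of $\CCC$ is $\sigma$-compact, so $X\in\Diff_1(\mathbf{\Sigma}^0_2)\subseteq\Delta$, again a contradiction. Hence $\CCC\setminus X$ is a non-empty, non-compact, zero-dimensional space.

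Next I would build the required clopen $\pi$-base. Given a non-empty clopen $U\subseteq\CCC$, Theorem \ref{hhomogeneouslyembedded} supplies a homeomorphism $h_U:\CCC\to U$ with $h_U[X]=U\cap X$; consequently $h_U$ maps $\CCC\setminus X$ onto $U\setminus X=U\cap(\CCC\setminus X)$, so $\CCC\setminus X\approx U\cap(\CCC\setminus X)$. Applying this with $U=[s]$ for $s\in 2^{<\omega}$, one sees that $\BB=\{[s]\cap(\CCC\setminus X):s\in 2^{<\omega}\}$ is a base of clopen subsets of $\CCC\setminus X$ such that every member of $\BB$ is homeomorphic to $\CCC\setminus X$. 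Since $\CCC\setminus X$ is non-compact, Lemma \ref{pibase} now yields that $\CCC\setminus X$ is strongly homogeneous, hence homogeneous.

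I do not expect a genuine obstacle: all the substance is contained in Theorem \ref{hhomogeneouslyembedded}. The only point that needs a moment's thought is the elimination of the compact case, and it is precisely here that the hypothesis $X\notin\Delta$ is used to guarantee that $\CCC\setminus X$ is non-compact, so that Lemma \ref{pibase} applies.
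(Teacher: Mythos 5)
Your proof is correct and follows essentially the same route as the paper: Theorem \ref{hhomogeneouslyembedded} gives a clopen base of $\CCC\setminus X$ consisting of copies of $\CCC\setminus X$, and Terada's Lemma \ref{pibase} then yields strong homogeneity, hence homogeneity. Your explicit verification that $\CCC\setminus X$ is non-empty and non-compact (so that Lemma \ref{pibase} applies) is a detail the paper leaves implicit, and it is handled correctly.
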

\begin{proof}
Using Theorem \ref{hhomogeneouslyembedded}, it is easy to see that $\CCC\setminus X$ has a base consisting of clopen subspaces that are homeomorphic to $\CCC\setminus X$. By Lemma \ref{pibase}, it follows that $\CCC\setminus X$ is strongly homogeneous, hence homogeneous.
\end{proof}

\section{A theorem of Steel}

The following is a particular case of \cite[Theorem 2]{steel}. It is the fundamental tool for dealing with homogeneous Borel spaces that are not in $\Delta$ (here, as well as in \cite{vanengelent} and \cite{vanengeleni}). We remark that the proof of Theorem \ref{steeltheorem} uses the fact that all Borel games are determined (this is a deep result due to Martin, see \cite[Theorem 20.5]{kechris}). As usual, we will follow closely the exposition of van Engelen (see \cite[Section 4.1]{vanengelent}).

Let $\mathbf{\Gamma}\subseteq\PP(\CCC)$ and $X\subseteq \CCC$ be given. We will say that $X$ is \emph{everywhere properly $\mathbf{\Gamma}$} if $U\cap X\in\mathbf{\Gamma}\setminus\check{\mathbf{\Gamma}}$ for every non-empty open subset $U$ of $\CCC$. We will say that $\mathbf{\Gamma}$ is \emph{continuously closed} if $f^{-1}[A]\in\mathbf{\Gamma}$ whenever $A\in\mathbf{\Gamma}$ and $f:\CCC\longrightarrow\CCC$ is a continuous function.

Given $i\in\{0,1\}$, define $Q_i=\{x\in \CCC:x(n)=i\text{ for all but finitely many }n\in\omega\}$. Also define the function $\phi:\CCC\setminus (Q_0\cup Q_1)\longrightarrow \CCC$ by setting
$$
\left.
\begin{array}{lcl}
& & \phi(x)(n)= \left\{
\begin{array}{ll}
0 & \text{if the } n\text{-th block of zeros in }x\text{ has even length,}\\
1 & \text{if the } n\text{-th block of zeros in }x\text{ has odd length,}
\end{array}
\right.
\end{array}
\right.
$$
where we start counting from the $0$-th block. Given $\mathbf{\Gamma}\subseteq\PP(\CCC)$, we will say that $\mathbf{\Gamma}$ is \emph{reasonably closed} if it is continuously closed and $\phi^{-1}[A]\cup Q_0\in\mathbf{\Gamma}$ whenever $A\in\mathbf{\Gamma}$.
The only fact involving reasonably closed classes that we will need is the following lemma, as it will allow us to apply Theorem \ref{steeltheorem} to the subspaces of $\CCC$ that we will consider in the next section.

\begin{lemma}\label{reasonably}
Let $X$ be a homogeneous Borel subspace of $\CCC$, and let $\mathbf{\Gamma}=[X]$. Assume that $X\notin\Delta$. Then $\mathbf{\Gamma}$ is reasonably closed.
\end{lemma}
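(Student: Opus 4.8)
The plan is to verify the two defining properties of a reasonably closed class for $\mathbf{\Gamma} = [X]$, namely that $\mathbf{\Gamma}$ is continuously closed and that $\phi^{-1}[A] \cup Q_0 \in \mathbf{\Gamma}$ whenever $A \in \mathbf{\Gamma}$. The first property is immediate: if $\mathbf{\Gamma} = [X]$ is a Wadge class then $A \leq_W X$ and $B \leq_W A$ together give $B \leq_W X$ by composing the reducing maps, so $\mathbf{\Gamma}$ is automatically continuously closed. So the real content is the second property.

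For the second property, fix $A \in \mathbf{\Gamma}$; I want to show $B := \phi^{-1}[A] \cup Q_0 \in \mathbf{\Gamma}$, i.e. $B \leq_W X$. First I would observe that $\phi : \CCC \setminus (Q_0 \cup Q_1) \longrightarrow \CCC$ is continuous on its domain, that its domain is a $\mathbf{\Pi}^0_2$ (indeed $\mathbf{\Delta}^0_2$, since $Q_0$ and $Q_1$ are $\mathbf{\Sigma}^0_2$) subset of $\CCC$, and that $\phi$ extends to a continuous map defined on an open neighborhood, or alternatively that $\phi^{-1}[A]$ is a set of the appropriate complexity built from $A$ via a $\mathbf{\Delta}^0_2$-substitution. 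The key structural fact is that $B$ is obtained from $A$ by a continuous substitution followed by a "separated union" type operation, and the Wadge class $\mathbf{\Gamma}$ is closed under exactly such operations by Lemma \ref{separatedunions} (and, when $X \notin \Delta$, Lemma \ref{GdeltaFsigma} handles the $\mathbf{\Sigma}^0_2$ and $\mathbf{\Pi}^0_2$ modifications). More precisely: $\phi^{-1}[A]$ is (a continuous preimage of $A$ intersected with the $\mathbf{\Pi}^0_2$ set $\CCC \setminus (Q_0 \cup Q_1)$, hence in $\mathbf{\Gamma}$ by Lemma \ref{GdeltaFsigma}), and then taking the union with the $\mathbf{\Sigma}^0_2$ set $Q_0$ keeps us in $\mathbf{\Gamma}$ again by Lemma \ref{GdeltaFsigma}, using that $\mathbf{\Gamma} = \mathbf{\Gamma}_u$ or $\check{\mathbf{\Gamma}}_u$ for some $u \in D_0$ — which is exactly what the second bullet of Lemma \ref{nonselfdual} provides, since $X \notin \Delta$.

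I would carry this out in the following order. First, note $X \notin \Delta$ forces $\mathbf{\Gamma} \in \{\mathbf{\Gamma}_u, \check{\mathbf{\Gamma}}_u\}$ for some $u \in D_0$ by Lemma \ref{nonselfdual}. Second, dispatch continuous closure by the trivial composition argument above (or observe it follows from $u \in D_0$ and van Engelen's basic facts about the $\mathbf{\Gamma}_u$). Third, take $A \in \mathbf{\Gamma}$ arbitrary; write $C = \phi^{-1}[A]$, a subset of $\CCC \setminus (Q_0 \cup Q_1)$, and check that $C$, viewed as a subset of $\CCC$, lies in $\mathbf{\Gamma}$: this is where I'd want to be careful, because $\phi$ is only partial, so I should either extend $\phi$ continuously to all of $\CCC$ (sending the excluded points anywhere, then intersecting back down with the $\mathbf{\Delta}^0_2$ domain and invoking Lemma \ref{GdeltaFsigma} or Lemma \ref{deltaxi}) or argue directly that $C = \tilde\phi^{-1}[A] \cap (\CCC \setminus (Q_0 \cup Q_1))$ for a suitable total continuous $\tilde\phi$. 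Fourth, conclude $B = C \cup Q_0 \in \mathbf{\Gamma}$ by adding the $\mathbf{\Sigma}^0_2$ set $Q_0$, again via Lemma \ref{GdeltaFsigma}.

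The main obstacle I anticipate is the bookkeeping around the partiality of $\phi$ and making sure the complexity estimates go through cleanly — specifically, confirming that a continuous extension of $\phi$ to all of $\CCC$ exists (the $n$-th block of zeros is well-defined and depends continuously on $x$ as long as there are infinitely many blocks, and the excluded set $Q_0 \cup Q_1$ is small and $\mathbf{\Sigma}^0_2$, so this should be routine) and that Lemma \ref{GdeltaFsigma} applies in the stated form (it requires $u \in D_0$, which we have). A secondary point to get right is that, strictly speaking, this lemma is only invoked in the next section for subspaces that happen to be first category or otherwise; but since the hypotheses here are exactly "homogeneous Borel, not in $\Delta$", Lemma \ref{nonselfdual} already gives us the $D_0$ membership unconditionally, so no extra case analysis on Baire category is needed. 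Everything else is a direct citation of the closure lemmas collected in Section 12.
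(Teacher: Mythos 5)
Your overall architecture starts the same way as the paper (Lemma \ref{nonselfdual} puts $\mathbf{\Gamma}$ in $\{\mathbf{\Gamma}_u,\check{\mathbf{\Gamma}}_u\}$ for some $u\in D_0$, and continuous closure of a Wadge class is indeed trivial by composition), but the paper then finishes in one line by citing van Engelen's Lemma 4.2.17, which states outright that these classes are reasonably closed. You instead try to verify the $\phi$-condition directly, and that is where there is a genuine gap. Two of your claims are false: the domain $\CCC\setminus(Q_0\cup Q_1)$ is \emph{not} $\mathbf{\Delta}^0_2$ --- it is homeomorphic to $\PPP$, hence not $\sigma$-compact, hence properly $\mathbf{\Pi}^0_2$ --- and $\phi$ admits \emph{no} continuous extension to $\CCC$, nor to any open set containing its domain. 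Near a point of $Q_0\cup Q_1$ the parity of the relevant block of zeros oscillates (e.g.\ the points $0^k1^\frown z$ converge to $\underline{0}$ while $\phi(0^k1^\frown z)(0)$ alternates with $k$), so the "routine" extension you lean on does not exist, and the step producing $\tilde\phi^{-1}[A]$ as a bona fide member of $\mathbf{\Gamma}$ fails.

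The argument is repairable with tools already in the paper: since $\phi$ is continuous on its $\mathbf{\Pi}^0_2$ domain, Lemmas \ref{relativewadgeinverse} and \ref{relativewadgesubspace} give $\phi^{-1}[A]=A'\cap(\CCC\setminus(Q_0\cup Q_1))$ for some $A'\in\mathbf{\Gamma}$ (for the $\check{\mathbf{\Gamma}}_u$ case, pass to complements inside the domain), and then Lemma \ref{GdeltaFsigma} --- which only needs the intersecting set to be $\mathbf{\Pi}^0_2$ and the adjoined set $Q_0$ to be $\mathbf{\Sigma}^0_2$, both true --- finishes exactly as you intend. This is the same maneuver the paper uses for the partial map $f$ in the proof of Theorem \ref{firstcategorysemiideal}. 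So your plan is salvageable and would amount to re-proving van Engelen's Lemma 4.2.17 in the $D_0$ case, but as written the central step would fail.
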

\begin{proof}
By Lemma \ref{nonselfdual}, we see that $\mathbf{\Gamma}\in\{\mathbf{\Gamma}_u,\check{\mathbf{\Gamma}}_u\}$ for some $u\in D_0$. It follows from \cite[Lemma 4.2.17]{vanengelent} that $\mathbf{\Gamma}$ is reasonably closed.
\end{proof}

\newpage

\begin{theorem}[Steel]\label{steeltheorem}
Let $\mathbf{\Gamma}$ be a reasonably closed class of Borel subsets of $\CCC$.
Let $X$ and $Y$ be subspaces of $\CCC$ that satisfy the following conditions.
\begin{itemize}
\item $X,Y$ are everywhere properly $\mathbf{\Gamma}$.
\item $X,Y$ are either both first category or both Baire.
\end{itemize}
Then there exists a homeomorphism $h:\CCC\longrightarrow \CCC$ such that $h[X]=Y$.
\end{theorem}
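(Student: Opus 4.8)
The plan is to produce $h$ as the limit of a fusion of increasingly fine clopen partitions of the two copies of $\CCC$. Concretely, I would build Cantor schemes $\{U_s:s\in 2^{<\omega}\}$ and $\{V_s:s\in 2^{<\omega}\}$ of non-empty clopen subsets of $\CCC$ with $U_\varnothing=V_\varnothing=\CCC$, with $U_{s0},U_{s1}$ partitioning $U_s$ (and likewise for $V$), and with $\diam(U_s),\diam(V_s)\to 0$, so that the map $h$ sending the unique point of $\bigcap_n U_{x\re n}$ to the unique point of $\bigcap_n V_{x\re n}$ is a well-defined homeomorphism (a continuous bijection between compact spaces). The extra requirement, which is the whole point of the theorem, is that the scheme be chosen so that $x\in X\iff h(x)\in Y$ along every branch; this immediately gives $h[X]=Y$. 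Thus everything reduces to a single \emph{refinement step}: given matched clopen boxes $U\leftrightarrow V$ on which $X$, respectively $Y$, restricts (after rescaling to $\CCC$) to a set that is still everywhere properly $\mathbf{\Gamma}$ and in the prescribed Baire category class, produce compatible finer partitions of $U$ and $V$, together with a bijection of the pieces that can be continued one level further.

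To carry out the refinement step I would use an auxiliary game of Banach--Mazur/Wadge type with passing --- precisely the game for which $\phi$, $Q_0$, and $Q_1$ were introduced in the definition of ``reasonably closed''. In such a game two players alternately extend clopen approximations inside the two boxes, each being allowed to ``pass'' (this is the role of the blocks of zeros decoded by $\phi$), and a run determines a point of the first box and a point of the second, with the ``reducer'' declared the winner iff the two points lie in $X$ and $Y$ respectively, or in neither. Since $\mathbf{\Gamma}$ consists of Borel sets, this payoff set is Borel, so the game is determined by Martin's Borel determinacy theorem. The key claim is that the reducer wins: if instead the other player had a winning strategy, then --- and here is exactly where continuous closure and closure under $A\mapsto\phi^{-1}[A]\cup Q_0$ are used, together with the dichotomy of Lemma \ref{wadge} --- unravelling that strategy would exhibit a non-empty relatively open piece of $X$ (or of $Y$) lying in $\check{\mathbf{\Gamma}}$, contradicting ``everywhere properly $\mathbf{\Gamma}$''. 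A winning strategy for the reducer, played against all possible opponent moves, then yields the desired compatible pair of subdivisions, and running the symmetric game with the roles of $X$ and $Y$ exchanged guarantees that the bijection produced is genuinely onto in both directions, so that iterating the step gives a bijection rather than merely a continuous reduction.

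Finally I would assemble the pieces: iterate the refinement step along $2^{<\omega}$, shrinking diameters at each stage, to obtain the two schemes and hence the homeomorphism $h$; the winning condition of the auxiliary games, applied to each branch, yields $x\in X\iff h(x)\in Y$, i.e. $h[X]=Y$. The hypothesis that $X$ and $Y$ are \emph{both} first category or \emph{both} Baire is what makes the ``everywhere properly $\mathbf{\Gamma}$'' information survive to the limit and lets the bijection be built coherently: in the first-category case one uses that $X$ and $\CCC\setminus X$ each meet every clopen set in a non-meager set, so the localized games never degenerate, while in the Baire case one uses that $X$ and $Y$ each contain a dense complete subspace to keep enough branches inside and outside $X$. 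The step I expect to be the main obstacle is the verification that the reducer's opponent has no winning strategy: one must identify precisely which auxiliary set an opponent strategy would reduce into $\mathbf{\Gamma}$ or $\check{\mathbf{\Gamma}}$ and then invoke exactly the closure property packaged into ``reasonably closed'' --- and this is the one place where the Borel determinacy of Martin genuinely does the heavy lifting.
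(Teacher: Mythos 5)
A preliminary remark on scope: the paper does not prove this statement at all --- it is quoted as a particular case of \cite[Theorem 2]{steel}, with the exposition deferred to \cite[Section 4.1]{vanengelent}. So there is no in-paper proof to compare yours against; what you have written is a reconstruction of Steel's argument, and it should be judged as such. Your skeleton does match the architecture of the cited proof: a fusion of clopen schemes on both sides, driven at each refinement step by winning strategies in Wadge-type games with passing. You correctly identify $\phi$, $Q_0$, $Q_1$ as the coding of the passing moves, and you correctly locate where Borel determinacy and the two closure properties packaged into ``reasonably closed'' are used --- a winning strategy for the wrong player would, via Lemma \ref{wadge}, put some relatively clopen trace of $X$ or $Y$ into $\check{\mathbf{\Gamma}}$, contradicting ``everywhere properly $\mathbf{\Gamma}$''.

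The genuine gap is in your second paragraph, at ``running the symmetric game with the roles of $X$ and $Y$ exchanged guarantees that the bijection produced is genuinely onto in both directions.'' What the two games actually hand you is a continuous $f$ with $f^{-1}[Y\cap V]=X\cap U$ and a continuous $g$ with $g^{-1}[X\cap U]=Y\cap V$; a pair of continuous reductions in opposite directions does not, by any soft argument, yield a homeomorphism of pairs, and converting the strategies into a single bijective fusion is exactly where the bulk of Steel's proof lives. Concretely, at each stage one must split $U$ and $V$ into matched clopen pieces so that the ``reducer'' still wins all the localized games on each matched pair \emph{in both directions}, and one must argue that the pieces can be paired off bijectively; this requires knowing that the traces of $X$ and of $\CCC\setminus X$ on every nonempty clopen piece remain everywhere properly $\mathbf{\Gamma}$ and of the prescribed Baire category, and it is here --- not in ``making the information survive to the limit'' --- that the hypothesis that $X$ and $Y$ are both first category or both Baire is consumed (it is of course also a necessary condition, since homeomorphisms of $\CCC$ preserve Baire category). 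As written, your refinement step is a statement of what needs to be proved rather than a proof of it, so the proposal is an accurate outline of the cited argument but not a complete proof.
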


The following is \cite[Lemma 2.7]{vanengeleni}.

\begin{theorem}[van Engelen]\label{steelconsequence}
Let $X,Y$ be homogeneous Borel subspaces of $\CCC$ such that $X,Y\notin\Delta$. Then the following conditions are equivalent.
\begin{itemize}
\item $X\approx Y$.
\item $X\equiv_W Y$ and $X,Y$ are either both first category or both Baire.
\end{itemize}
\end{theorem}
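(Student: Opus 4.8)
\textbf{Proof proposal for Theorem \ref{steelconsequence}.}

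The plan is to prove the two implications separately, with the forward direction being essentially formal and the backward direction being where the real work (an application of Steel's theorem) happens. For the forward direction, suppose $X \approx Y$. Since $X$ is not in $\Delta$ and is homogeneous and Borel, Lemma \ref{wadgeindependent} tells us that the Wadge class generated by a homogeneous Borel non-locally-compact subspace of $\CCC$ is invariant under re-embedding; so from $X \approx Y$ we immediately get $X \equiv_W Y$. (Note that $X, Y$ are not locally compact: a non-empty locally compact subspace of $\CCC$ has a compact, hence $\mathbf{\Pi}^0_1$, clopen piece, so by homogeneity $X$ would be locally compact throughout and in particular in $\Delta$, contrary to hypothesis. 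Strictly speaking one should phrase this via the structure theory already quoted, but the point is clear.) Moreover, every homogeneous space is either first category or Baire by the facts recalled in Section 2, and this dichotomy is obviously a homeomorphism invariant; so $X \approx Y$ forces $X$ and $Y$ to land on the same side of that dichotomy. This gives the forward implication.

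For the backward direction, assume $X \equiv_W Y$ and that $X, Y$ are both first category or both Baire. Let $\mathbf{\Gamma} = [X] = [Y]$. By Lemma \ref{nonselfdual}, since $X \notin \Delta$, we have $\mathbf{\Gamma} \in \{\mathbf{\Gamma}_u, \check{\mathbf{\Gamma}}_u\}$ for some $u \in D_0$, and $\mathbf{\Gamma}$ is non-self-dual. By Lemma \ref{reasonably}, $\mathbf{\Gamma}$ is reasonably closed. The goal is to apply Theorem \ref{steeltheorem}, and for that I must verify that $X$ and $Y$ are \emph{everywhere properly $\mathbf{\Gamma}$}, i.e. that $U \cap X \in \mathbf{\Gamma} \setminus \check{\mathbf{\Gamma}}$ for every non-empty open $U \subseteq \CCC$, and likewise for $Y$. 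We already know $X \in \mathbf{\Gamma}$. To see $X \notin \check{\mathbf{\Gamma}}$: if $X \in \mathbf{\Gamma} \cap \check{\mathbf{\Gamma}} = \Delta(\mathbf{\Gamma})$ then, since $\mathbf{\Gamma}$ is a non-self-dual Borel Wadge class, $[X] \subsetneq \mathbf{\Gamma}$, contradicting $[X] = \mathbf{\Gamma}$. For the localization: $X$ is dense in $\CCC$ may fail, but using homogeneity plus the fact that $X$ is zero-dimensional one reduces to a dense copy — more directly, since $X$ is homogeneous and not locally compact, for any non-empty open $U$ of $\CCC$ the trace $U \cap X$ (when non-empty) is, by Lemma \ref{wadgeindependent} applied to a clopen refinement and the strong homogeneity coming from Corollary \ref{borelsemifilterstronglyhomogeneous}-style arguments (more precisely \cite[Corollary 4.4.6]{vanengelent} as used in Theorem \ref{hhomogeneouslyembedded}), again homeomorphic to $X$ and hence generates $\mathbf{\Gamma}$; this gives $U \cap X \in \mathbf{\Gamma}\setminus\check{\mathbf{\Gamma}}$. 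The same applies to $Y$.

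Once everywhere-properness is in hand, Theorem \ref{steeltheorem} applied to $X$, $Y$ (both everywhere properly $\mathbf{\Gamma}$, both first category or both Baire) yields a homeomorphism $h \colon \CCC \longrightarrow \CCC$ with $h[X] = Y$, so in particular $X \approx Y$, completing the backward direction.

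I expect the main obstacle to be the localization step — showing $U \cap X$ is everywhere properly $\mathbf{\Gamma}$ rather than merely $X \in \mathbf{\Gamma} \setminus \check{\mathbf{\Gamma}}$ globally. The subtlety is that $X$ need not be embedded densely in $\CCC$, so one must first arrange, using the homogeneity and zero-dimensionality of $X$ together with van Engelen's extension results (Theorem \ref{hhomogeneouslyembedded} and the strong homogeneity of $X$), that every non-empty clopen trace of $X$ is again homeomorphic to $X$; only then does the Wadge-class computation localize. The other steps — the non-self-duality giving $X \notin \check{\mathbf{\Gamma}}$, and the reasonably-closed hypothesis — are direct citations of results already assembled in Sections 9–12.
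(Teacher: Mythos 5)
Your overall strategy is the paper's: handle the forward direction via Lemma \ref{wadgeindependent} and the homeomorphism-invariance of the category dichotomy, reduce to dense embeddings, verify that $X$ and $Y$ are everywhere properly $\mathbf{\Gamma}$, and apply Steel's theorem. The forward direction and the treatment of \emph{clopen} traces are correct: for clopen $U$, strong homogeneity (via \cite[Corollary 4.4.6]{vanengelent}) gives $U\cap X\approx X$, so $[U\cap X]=\mathbf{\Gamma}$ by Lemma \ref{wadgeindependent}, and non-self-duality rules out $U\cap X\in\check{\mathbf{\Gamma}}$. (Do commit fully to the density reduction rather than hedging with ``when non-empty'': if some non-empty open $U$ misses $X$ entirely, then $U\cap X=\varnothing\in\check{\mathbf{\Gamma}}$ and everywhere-properness simply fails, so Steel's theorem cannot be applied to a non-dense copy. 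The paper arranges density by restricting a Wadge reduction witnessing $X\equiv_W Y$ to $\cl(X)$ and $\cl(Y)$; your alternative of re-embedding densely and invoking Lemma \ref{wadgeindependent} also works.)

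The genuine gap is in the passage from clopen to arbitrary open $U$. You assert that $U\cap X\approx X$ for every non-empty open $U$, justified by ``a clopen refinement and strong homogeneity.'' But a clopen refinement of a non-clopen open $U$ only yields $U\cap X\approx\bigsqcup_{n}(U_n\cap X)\approx\omega\times X$, and the identification $\omega\times X\approx X$ is not a consequence of strong homogeneity; for these spaces it ultimately rests on the uniqueness part of Theorem \ref{abovedelta1} together with the closure of $\mathbf{\Gamma}$ under countable unions of pieces separated by open sets. The paper avoids this stronger claim entirely: it only needs $U\cap X\in\mathbf{\Gamma}\setminus\check{\mathbf{\Gamma}}$, which it gets by writing $U\cap X=\bigcup_{n}(U_n\cap X)\in\SU(\mathbf{\Sigma}^0_1,\mathbf{\Gamma})=\mathbf{\Gamma}$ (Lemma \ref{separatedunions}), while $U\cap X\notin\check{\mathbf{\Gamma}}$ follows because $U_0\cap X=(U\cap X)\cap U_0\leq_W U\cap X$ and $U_0\cap X\notin\check{\mathbf{\Gamma}}$ by the clopen case. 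Without invoking Lemma \ref{separatedunions} (or an equivalent $\SU$-closure argument), your verification of everywhere-properness does not close, so you should add that step.
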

\begin{proof}
One implication is an immediate consequence of Lemma \ref{wadgeindependent}. Now assume that $X\equiv_W Y$ and $X,Y$ are either both first category or both Baire. First notice that, given any two dense in itself subspaces $A$ and $B$ of $\CCC$ and a Wadge reduction $f:\CCC\longrightarrow \CCC$ from $A$ to $B$, the restriction $f\re\cl(A):\cl(A)\longrightarrow\cl(B)$ will still be a Wadge reduction from $A$ to $B$, where $\cl$ denotes closure in $\CCC$. Therefore, we can assume that $X$ and $Y$ are dense in $\CCC$.

Let $\mathbf{\Gamma}=[X]=[Y]$. Notice that $\mathbf{\Gamma}$ is reasonably closed by Lemma \ref{reasonably}. Therefore, in order to to apply Theorem \ref{steeltheorem}, it will be sufficient to show that $X$ and $Y$ are everywhere properly $\mathbf{\Gamma}$. We will only prove this for $X$, since the proof for $Y$ is similar.

Let $U$ be a non-empty clopen subset of $\CCC$. It is trivial to show that $U\cap X\leq_W X$. Therefore $U\cap X\in\mathbf{\Gamma}$.
Now assume, in order to get a contradiction, that $U\cap X\in\check{\mathbf{\Gamma}}$. By Theorem \ref{hhomogeneouslyembedded},
we can fix a homeomorphism $h_U:\CCC\longrightarrow U$ such that $h_U[X]=U\cap X$. Since $h_U$ clearly witnesses that $X\leq_W U\cap X$,
we see that $X\in\check{\mathbf{\Gamma}}$, which contradicts the fact that $\mathbf{\Gamma}$ is non-self-dual (see Lemma \ref{nonselfdual}).
In conclusion, we have proved that $U\cap X\in\mathbf{\Gamma}\setminus\check{\mathbf{\Gamma}}$ for every non-empty clopen subset $U$ of $\CCC$.

Finally, assume that $U$ is a non-empty open subset of $\CCC$. Without loss of genrality, assume that $U$ is not clopen, and let $U_n$ for $n\in\omega$ be pairwise disjoint non-empty clopen subsets of $\CCC$ such that $\bigcup_{n\in\omega}U_n=U$.
Notice that each $U_n\cap X\in\mathbf{\Gamma}\setminus\check{\mathbf{\Gamma}}$. In particular, we immediately see that $U\cap X\notin\check{\mathbf{\Gamma}}$.
Now observe that $\SU(\mathbf{\Sigma}^0_1,\mathbf{\Gamma})=\mathbf{\Gamma}$ by Lemma \ref{separatedunions}. Since $U\cap X=\bigcup_{n\in\omega}(U_n\cap X)\in \SU(\mathbf{\Sigma}^0_1,\mathbf{\Gamma})$, this concludes the proof.
\end{proof}

\section{From homogeneous space to semifilter: the case above $\Delta$}

In this section, we will complete the proof of Theorem \ref{main}.

\begin{theorem}\label{notindelta}
Let $X$ be a zero-dimensional homogeneous Borel space such that $X\notin\Delta$. Then $X$ is homeomorphic to a semifilter.
\end{theorem}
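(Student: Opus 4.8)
The plan is to reduce to the case where $X$ is densely embedded in $\CCC$ and then explicitly write down a semifilter $\Ss$ whose Wadge class and Baire category match those of $X$, so that Theorem \ref{steelconsequence} forces $\Ss\approx X$. First I would fix a dense embedding $X\subseteq\CCC$ (possible since $X$ is zero-dimensional and dense in itself, being homogeneous and not locally compact; a non-locally-compact homogeneous space is nowhere locally compact, hence dense in itself). Let $\mathbf{\Gamma}=[X]$; by Lemma \ref{nonselfdual}, $\mathbf{\Gamma}\in\{\mathbf{\Gamma}_u,\check{\mathbf{\Gamma}}_u\}$ for some $u\in D_0$, and $\mathbf{\Gamma}$ is non-self-dual and reasonably closed (Lemma \ref{reasonably}).

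The construction of the candidate semifilter is the heart of the matter. Since $\mathbf{\Cof}$ is an $F_\sigma$ (indeed $\sigma$-compact) dense subset of $\CCC$ sitting inside any semifilter, and $\mathbf{\Fin}$ is disjoint from it, I would split $\omega=\Omega_1\sqcup\Omega_2$ into two infinite pieces and transfer a copy of $X$, call it $X(\Omega_2)\subseteq 2^{\Omega_2}$, into the $\Omega_2$-coordinates via a bijection. Then I would set
$$
\Ss=\{x_1\cup x_2: x_1\subseteq\Omega_1,\ x_2\subseteq\Omega_2,\ (x_1\notin\Fin(\Omega_1)\text{ or }x_2\in X(\Omega_2))\},
$$
which is readily checked to be a semifilter (upward-closure and closure under finite modifications follow from $\Fin(\Omega_1)$ being an ideal and $X$ being closed under finite modifications — wait, $X$ need not be; if not, replace $X(\Omega_2)$ by a suitable homeomorphic copy or, more safely, mimic Lemma \ref{semifiltertricky}). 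In fact Lemma \ref{semifiltertricky} already shows $(\CCC\times\CCC)\setminus(\Fin\times(\CCC\setminus\Ss_0))$ is homeomorphic to a semifilter for any semifilter $\Ss_0$; I would instead want the analogue with $\CCC\setminus\Ss_0$ replaced by an arbitrary homogeneous Borel space $Z\notin\Delta$, i.e. show $(\CCC\times\CCC)\setminus(\Fin\times Z)$, or rather $\{x_1\cup x_2: x_1\notin\Fin(\Omega_1)\text{ or }x_2\in Z(\Omega_2)\}$, is homeomorphic to a semifilter — this is the correct generalization, and its proof is the same bijective transfer argument as in Lemma \ref{semifiltertricky}, noting that the resulting set is upward-closed and closed under finite modifications precisely because no finiteness condition is imposed on $x_2$ and the condition on $x_1$ is ideal-like.

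Next I would compute the Wadge class and Baire category of this $\Ss$. Writing $\Ss=(\PP(\Omega_1)\setminus\Fin(\Omega_1))\times 2^{\Omega_2}\ \cup\ \Fin(\Omega_1)\times X(\Omega_2)$: the first summand is a dense $\mathbf{\Pi}^0_2$ (complete, nowhere locally compact) subspace, hence of low Wadge complexity, and the second is a countable (hence $\sigma$-compact, hence $\mathbf{\Sigma}^0_2$) union of copies of $X(\Omega_2)$. Since $X\notin\Delta$, we have $\mathbf{\Pi}^0_2,\mathbf{\Sigma}^0_2\subseteq\Delta(\Diff_\omega^\CCC(\mathbf{\Sigma}^0_2))\subseteq\mathbf{\Gamma}$, and using Lemma \ref{GdeltaFsigma} (for $u\in D_0$, $\mathbf{\Gamma}$ is closed under $\cap\,\mathbf{\Pi}^0_2$ and $\cup\,\mathbf{\Sigma}^0_2$) together with Lemma \ref{separatedunions} ($\SU(\mathbf{\Sigma}^0_1,\mathbf{\Gamma})=\mathbf{\Gamma}$ to handle the countable disjoint union of translates of $X(\Omega_2)$), I would conclude $[\Ss]=\mathbf{\Gamma}=[X]$; the lower bound $\mathbf{\Gamma}\subseteq[\Ss]$ comes from the fact that $\Ss$ contains a copy of $X$ as a closed (or nicely embedded) subspace, via Lemma \ref{closedwadge} or directly since $\Fin(\Omega_1)\times X(\Omega_2)$ retracts appropriately. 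Finally, Baire category: $\Ss$ is first category iff $X$ is — the complete part $(\PP(\Omega_1)\setminus\Fin(\Omega_1))\times 2^{\Omega_2}$ is nowhere dense in $\Ss$ (its complement $\Fin(\Omega_1)\times X(\Omega_2)$ is dense since $\Fin(\Omega_1)$ is dense and $X(\Omega_2)$ is dense), so $\Ss$ is Baire iff that dense-in-itself complement is Baire iff... hmm, this needs care: $\Ss$ is first category iff it is not Baire; since $\Ss$ is homogeneous (Corollary \ref{semifilterhomogeneous}), it is one or the other, and one checks the category of $\Ss$ agrees with that of $X$ by noting every non-empty open subset of $\Ss$ meets $\Fin(\Omega_1)\times X(\Omega_2)\approx\omega\times X$ in a (relatively) open set, whose category is that of $X$. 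Then $\Ss\notin\Delta$ (as $[\Ss]=\mathbf{\Gamma}$ is non-self-dual), $\Ss$ and $X$ are homogeneous Borel subspaces of $\CCC$ outside $\Delta$ with $\Ss\equiv_W X$ and the same category, so Theorem \ref{steelconsequence} gives $\Ss\approx X$, completing the proof.

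\textbf{Main obstacle.} The delicate point is pinning down the Baire category of $\Ss$ and the exact Wadge class, i.e. verifying $[\Ss]=[X]$ and that $\Ss$ inherits $X$'s category; the Wadge upper bound needs the closure lemmas for $\mathbf{\Gamma}_u$, $u\in D_0$, applied to a decomposition of $\Ss$ into a complete piece and a countable disjoint union of translates of $X$, and the lower bound needs a copy of $X$ sitting inside $\Ss$ in a Wadge-detectable way; both are routine given Lemmas \ref{separatedunions}, \ref{GdeltaFsigma} and \ref{closedwadge}, but the bookkeeping is where an error could hide.
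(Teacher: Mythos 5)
Your top-level strategy --- produce a semifilter $\Ss$ with $[\Ss]=[X]$ and the same Baire category as $X$, then invoke Theorem \ref{steelconsequence} --- is exactly the paper's. The gap is in the construction of $\Ss$. The set
$$
\Ss=\{x_1\cup x_2: x_1\subseteq\Omega_1,\ x_2\subseteq\Omega_2,\ (x_1\notin\Fin(\Omega_1)\text{ or }x_2\in X(\Omega_2))\}
$$
is a semifilter only if $X(\Omega_2)$ already is one: take $x=x_1\cup x_2$ with $x_1\in\Fin(\Omega_1)$ and $x_2\in X(\Omega_2)$, and enlarge $x$ (or modify it finitely) within the $\Omega_2$-coordinates while keeping $x\cap\Omega_1$ finite; membership of the result in $\Ss$ then demands that $X(\Omega_2)$ be upward-closed and closed under finite modifications (and omit $\varnothing$), i.e.\ be a semifilter on $\Omega_2$ --- which is precisely what the theorem is asking you to produce. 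Your parenthetical ``wait, $X$ need not be'' identifies the problem, but the proposed repair is not one: the claim that the set is upward-closed ``because no finiteness condition is imposed on $x_2$'' is false, since the clause $x_2\in X(\Omega_2)$ is exactly a condition on $x_2$, and it is the one that breaks. Lemma \ref{semifiltertricky} works only because its input is already a semifilter; it does not generalize to an arbitrary homogeneous Borel $Z$.

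The paper escapes this circularity with a genuinely different construction (Theorem \ref{firstcategorysemiideal}): it embeds $X$ densely into the branch space $K=\{\{x\re n:n\in\omega\}:x\in\CCC\}\subseteq\PP(2^{<\omega})$, takes the downward closure $Y=\hat X=\{y:y\subseteq z\text{ for some }z\in X\}$, and sets $\RR=\{y\cup e:y\in Y,\ e\in\Fin(2^{<\omega})\}$. This $\RR$ is a semiideal by fiat --- downward closure and closure under finite modifications are built into the two operations --- and all the work goes into showing that these operations preserve the Wadge class (via Lemmas \ref{relativewadgeinverse}, \ref{relativewadgesubspace}, \ref{closedwadge}, \ref{GdeltaFsigma} and \ref{separatedunions}) and that $\RR$ is first category. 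A second point your outline glosses over: the ``countable union of translates of $X$'' step needs $\SU(\mathbf{\Sigma}^0_2,\mathbf{\Gamma})=\mathbf{\Gamma}$ (the translates are separated by closed, not open, sets), and Lemma \ref{separatedunions} guarantees this only when $X$ is first category. The paper therefore treats the Baire case separately, applying the first-category construction to $\CCC\setminus X$ (homogeneous by Corollary \ref{homogeneouscomplement}) and complementing (Corollary \ref{bairesemifilter}); a single uniform construction such as yours would run into the same obstruction.
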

\begin{proof}
Without loss of generality, assume that $X$ is a dense (Borel) subspace of $\CCC$. First assume that $X$ is Baire. In this case, by Theorem \ref{bairesemifilter}, there exists a Baire semifilter $\Ss$ such that $\Ss\equiv_W X$. It follows from Theorem \ref{steelconsequence} and Corollary \ref{semifilterhomogeneous} that $\Ss\approx X$.

Now assume that $X$ is first category. In this case, by Theorem \ref{firstcategorysemiideal}, there exists a first category semiideal $\RR$ such that $\RR\equiv_W X$. It follows from Theorem \ref{steelconsequence} and Corollary \ref{semifilterhomogeneous} that $\RR\approx X$. Let $\Ss=c[\RR]$, and observe that $\Ss$ is a semifilter. The trivial fact that $\Ss\approx\RR$ concludes the proof.
\end{proof}

For the proof of Theorem \ref{firstcategorysemiideal}, which is taken almost verbatim from the proof of \cite[Lemma 3.3]{vanengeleni}, we will need a few more preliminaries.
Given a space $Z$, it is possible to define the \emph{relative Wadge class} $\mathbf{\Gamma}_u(Z)$ for $u\in D$ by performing the same operations as in \cite[Definition 4.2.2]{vanengelent} to subsets of $Z$.
The following two lemmas (see \cite[Lemma 2.3]{vanengeleni} and \cite[Lemma 4.2.15]{vanengelent}) are then proved by a tedious but straightforward induction.
\begin{lemma}[van Engelen]\label{relativewadgeinverse}
Fix $u\in D$. Let $X$ and $Y$ be spaces. If $A\in\mathbf{\Gamma}_u(Y)$ and $f:X\longrightarrow Y$ is a continuous function then $f^{-1}[A]\in\mathbf{\Gamma}_u(X)$
\end{lemma}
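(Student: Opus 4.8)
\textbf{Proof proposal for Lemma \ref{relativewadgeinverse}.}
The plan is to proceed by induction on $u\in D$, following the recursive structure of van Engelen's Definition 4.2.2. First I would recall that the relative Wadge class $\mathbf{\Gamma}_u(Z)$ is built from the open subsets of $Z$ by a prescribed transfinite hierarchy of operations, each of which is one of: taking $\mathbf{\Sigma}^0_\xi$ or $\mathbf{\Pi}^0_\xi$ combinations (countable unions, complements within already-constructed classes), forming separated unions $\SU(\cdot,\mathbf{\Sigma}^0_\xi)$, and the $\Diff_\eta$ construction from increasing sequences. The base case is when $\mathbf{\Gamma}_u(Z)$ is simply an additive or multiplicative Borel class $\mathbf{\Sigma}^0_\xi(Z)$ or $\mathbf{\Pi}^0_\xi(Z)$; here the statement is the classical fact that the preimage of a $\mathbf{\Sigma}^0_\xi$ set under a continuous map is $\mathbf{\Sigma}^0_\xi$ (and likewise for $\mathbf{\Pi}^0_\xi$), which is proved by its own routine induction on $\xi$.

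For the inductive step, I would fix a continuous $f:X\longrightarrow Y$ and $A\in\mathbf{\Gamma}_u(Y)$, and write $A$ according to whichever operation sits at the top of the construction of $\mathbf{\Gamma}_u$. Each case reduces to the observation that $f^{-1}$ commutes with the relevant set operation: $f^{-1}[\bigcup_n A_n]=\bigcup_n f^{-1}[A_n]$, $f^{-1}[Y\setminus B]=X\setminus f^{-1}[B]$, and for separated unions $f^{-1}\bigl[\bigcup_n (A_n\cap W_n)\bigr]=\bigcup_n (f^{-1}[A_n]\cap f^{-1}[W_n])$, where the $f^{-1}[W_n]$ are again pairwise disjoint and of the correct additive class by the base case; similarly $f^{-1}$ applied to a $\Diff_\eta$ combination of an increasing sequence $\langle A_\zeta\rangle$ yields the $\Diff_\eta$ combination of $\langle f^{-1}[A_\zeta]\rangle$, which is still increasing. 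In every case the inductive hypothesis applied to the constituent sets (each lying in a class $\mathbf{\Gamma}_{u'}(Y)$ or $\mathbf{\Sigma}^0_\xi(Y)$ appearing earlier in the construction) gives that the preimages lie in the corresponding classes over $X$, and reassembling them by the same operation places $f^{-1}[A]$ in $\mathbf{\Gamma}_u(X)$.

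The only genuine subtlety — and the step I expect to be the main obstacle — is purely bookkeeping rather than mathematical: one must set up the induction on the (ordinal) complexity of the defining scheme of $\mathbf{\Gamma}_u$ carefully enough that at each stage the constituent classes are strictly simpler, so that the induction is well-founded, and one must verify that the disjointness condition in the separated-union clause is preserved (it is, since preimages of disjoint sets are disjoint). Since the paper only needs this as a black box and explicitly calls the argument "a tedious but straightforward induction," I would not grind through the case analysis in full; the proof is complete once the commutation identities above are recorded and the inductive framework is in place.
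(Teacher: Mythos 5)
Your proposal matches the paper's treatment: the paper gives no written proof of this lemma, only the remark that it (together with Lemma \ref{relativewadgesubspace}) is ``proved by a tedious but straightforward induction'' on the recursive definition of $\mathbf{\Gamma}_u$, with references to van Engelen. Your fleshed-out version --- base case via continuity of preimages for the Borel classes, inductive step via the commutation of $f^{-1}$ with unions, complements, differences, and separated unions (preserving disjointness) --- is exactly the intended argument.
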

\begin{lemma}[van Engelen]\label{relativewadgesubspace}
Fix $u\in D$. Suppose $X$ is a subspace of $\CCC$ and $A\subseteq X$. Then $A\in\mathbf{\Gamma}_u(X)$ if and only if $A=A'\cap X$ for some $A'\in\mathbf{\Gamma}_u$.
\end{lemma}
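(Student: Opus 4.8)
The plan is to treat the two directions separately; only one of them requires real work. For the ``if'' direction, suppose $A=A'\cap X$ for some $A'\in\mathbf{\Gamma}_u$. Let $\iota:X\longrightarrow\CCC$ be the inclusion map, which is continuous, and note that $A=\iota^{-1}[A']$. Since $\mathbf{\Gamma}_u=\mathbf{\Gamma}_u(\CCC)$ by the definition of the relative Wadge class, Lemma \ref{relativewadgeinverse} immediately yields $A\in\mathbf{\Gamma}_u(X)$. So it remains to prove the ``only if'' direction, namely that every $A\in\mathbf{\Gamma}_u(X)$ has the form $A'\cap X$ with $A'\in\mathbf{\Gamma}_u$; I would establish this by transfinite induction mirroring, step by step, the recursive construction of $\mathbf{\Gamma}_u$ in \cite[Definition 4.2.2]{vanengelent} (following Louveau \cite{louveau}), carried out uniformly for all subspaces $X$ of $\CCC$ at once. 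Thus the inductive statement is: for every $u\in D$ and every subspace $X$ of $\CCC$, one has $\mathbf{\Gamma}_u(X)\subseteq\{A'\cap X:A'\in\mathbf{\Gamma}_u\}$.

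The base case concerns the classes $\mathbf{\Sigma}^0_\xi$, where the statement reduces to the classical fact that the $\mathbf{\Sigma}^0_\xi$ subsets of a subspace $X$ of $\CCC$ are exactly the traces on $X$ of the $\mathbf{\Sigma}^0_\xi$ subsets of $\CCC$; this follows by an easy secondary induction on $\xi$, the point being that countable unions and complements are transported correctly under $B\mapsto B\cap X$. For the inductive step, I would check that each operation used to pass from strictly simpler classes to $\mathbf{\Gamma}_u$ commutes with the trace operation $B\mapsto B\cap X$, once the auxiliary data witnessing that a given set belongs to $\mathbf{\Gamma}_u(X)$ has been lifted to $\CCC$.

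There are three kinds of operations to treat. \emph{(i) The difference operation $\Diff_\eta$.} Given $A=\Diff(\langle A_\zeta:\zeta<\eta\rangle)$ with $A_\zeta=B_\zeta\cap X$ for suitable $B_\zeta$ in the appropriate simpler class (by the inductive hypothesis), replace $B_\zeta$ by $\bigcup_{\beta\leq\zeta}B_\beta$ to make the sequence increasing without leaving that class and while keeping its trace on $X$ equal to $A_\zeta$; since $\Diff$ is assembled from unions and set differences, it commutes with $B\mapsto B\cap X$ for increasing sequences, so $A=\Diff(\langle\bigcup_{\beta\leq\zeta}B_\beta:\zeta<\eta\rangle)\cap X$. \emph{(ii) The separated union $\SU(\,\cdot\,,\mathbf{\Sigma}^0_\xi)$.} Given $A=\bigcup_n(A_n\cap W_n)$ with the $W_n$ pairwise disjoint $\mathbf{\Sigma}^0_\xi$ subsets of $X$, lift each $A_n$ to the corresponding class of $\CCC$ by the inductive hypothesis, choose arbitrary $\mathbf{\Sigma}^0_\xi$ subsets $V_n$ of $\CCC$ with $V_n\cap X=W_n$, and then disjointify the $V_n$ using the reduction property of $\mathbf{\Sigma}^0_\xi$ (valid in the zero-dimensional space $\CCC$ even for $\xi=1$, and for $\xi\geq 2$ in any metrizable space) to obtain pairwise disjoint $\mathbf{\Sigma}^0_\xi$ sets $W_n'\subseteq\CCC$ with $W_n'\cap X=W_n$; then $\bigcup_n(A_n'\cap W_n')$ is the required lift. \emph{(iii) The remaining operations}, those producing the type $3$ Wadge classes, are built from continuous preimages (in particular preimages under the fixed map $\phi$, whose restriction to $X\setminus(Q_0\cup Q_1)$ is again continuous, so that Lemma \ref{relativewadgeinverse} applies) together with finitary Boolean combinations involving the fixed $\mathbf{\Sigma}^0_2$ sets $Q_0$ and $Q_1$; these are handled by combining Lemma \ref{relativewadgeinverse} with the observations already made in (i) and (ii).

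The main obstacle here is bookkeeping rather than insight: one must match the precise form of each clause in van Engelen's recursion and verify that every piece of auxiliary data --- transfinite increasing sequences of lower-class sets, countable pairwise disjoint $\mathbf{\Sigma}^0_\xi$ families, and the distinguished sets occurring in the type $3$ construction --- can be lifted from $X$ to $\CCC$ so as to remain in the appropriate (lifted) class. This is exactly the ``tedious but straightforward induction'' alluded to above, and its only genuinely nontrivial ingredient is the reduction property used to disjointify the lifted $\mathbf{\Sigma}^0_\xi$ sets in step (ii).
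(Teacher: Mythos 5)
Your proposal is correct and follows exactly the route the paper itself indicates: the paper offers no proof of this lemma beyond citing \cite[Lemma 4.2.15]{vanengelent} and remarking that it is ``proved by a tedious but straightforward induction'' on the construction of $\mathbf{\Gamma}_u$, which is precisely the induction you outline (the easy direction via Lemma \ref{relativewadgeinverse} applied to the inclusion $X\hookrightarrow\CCC$, the hard direction by lifting the auxiliary data clause by clause, with the reduction property of $\mathbf{\Sigma}^0_\xi$ doing the real work for separated unions). One small inaccuracy: the type-$3$ clause of van Engelen's recursion is the separated-difference operation, not anything built from $\phi$, $Q_0$ and $Q_1$ (those belong to the notion of ``reasonably closed'' used for Steel's theorem, not to \cite[Definition 4.2.2]{vanengelent}); but that clause is lifted by the same combination of the techniques in your steps (i) and (ii), so the correction is cosmetic.
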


\begin{theorem}\label{firstcategorysemiideal}
Let $X$ be a homogeneous dense Borel subspace of $\CCC$. Assume that $X$ is first category and $X\notin\Delta$.
Then there exists a first category semiideal $\RR$ such that $\RR\equiv_W X$.
\end{theorem}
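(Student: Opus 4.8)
The plan is to follow the construction in the proof of \cite[Lemma 3.3]{vanengeleni}, producing an explicit semiideal out of countably many copies of $X$ glued together by finiteness conditions, and then to identify its Wadge class using the closure properties collected in Section 11. Throughout, let $\mathbf{\Gamma}=[X]$. First I would record the relevant facts about $\mathbf{\Gamma}$: it is non-self-dual by Lemma \ref{nonselfdual}; since $X$ is first category and $X\notin\Delta$, we have $\SU(\mathbf{\Sigma}^0_2,\mathbf{\Gamma})=\mathbf{\Gamma}$ by Lemma \ref{separatedunions}; $\mathbf{\Gamma}$ is closed under intersection with $\mathbf{\Pi}^0_2$ sets and under union with $\mathbf{\Sigma}^0_2$ sets by Lemma \ref{GdeltaFsigma}, and under intersection with $\mathbf{\Delta}^0_2$ sets by Lemma \ref{deltaxi}. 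Moreover, arguing exactly as in the proof of Theorem \ref{steelconsequence} (via Theorem \ref{hhomogeneouslyembedded} and homogeneity), I may assume that $X$ is everywhere properly $\mathbf{\Gamma}$, so that $V\cap X\in\mathbf{\Gamma}\setminus\check{\mathbf{\Gamma}}$ for every non-empty open $V\subseteq\CCC$.

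Next I would build $\RR$. Fix a partition $\omega=\bigsqcup_{n\in\omega}\omega_n$ into infinite pieces, together with a homeomorphism $2^{\omega_n}\approx\CCC$ carrying a suitably translated copy $X_n\subseteq 2^{\omega_n}$ of $X$ with $\varnothing\in X_n$ and $X_n$ everywhere properly $\mathbf{\Gamma}$ inside $2^{\omega_n}$. Following van Engelen's recipe, I would define $\RR\subseteq\PP(\omega)$ by a condition of the form ``the support $\{n:x\cap\omega_n\neq\varnothing\}$ is suitably small, and on each active block $x\cap\omega_n$ lies in the copy derived from $X_n$'', arranged so that $\RR$ is $\subseteq^\ast$-downward-closed. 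Since $\varnothing\in\RR$ and $\omega\notin\RR$, this makes $\RR$ a semiideal (one observes easily that a subset of $\PP(\omega)$ is a semiideal if and only if it is $\subseteq^\ast$-downward-closed, contains $\varnothing$, and omits $\omega$; the ``finite modifications'' clause is then automatic). I expect the main technical obstacle to be precisely the verification that the finiteness conditions and the chosen copies fit together to yield genuine $\subseteq^\ast$-downward-closure rather than mere downward-closure, since the copies $X_n$ are not themselves downward-closed; this is the step where one must be faithful to van Engelen's construction and, most likely, where the relative Wadge lemmas \ref{relativewadgeinverse} and \ref{relativewadgesubspace} and the exact shape of the gluing are used.

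Once $\RR$ is constructed, three verifications remain. First, $\RR$ is first category: each block contributes a copy of the first category space $X$, and the ``small support'' ingredient is meager as in $\Fin$, so $\RR$ is a countable union of meager sets. Second, $\RR\in\mathbf{\Gamma}$: by Lemmas \ref{relativewadgeinverse} and \ref{relativewadgesubspace} the block conditions are $\mathbf{\Gamma}$ relative to the corresponding clopen sets, the support condition is $\mathbf{\Delta}^0_2$, and assembling the blocks as a separated union lands one in $\SU(\mathbf{\Sigma}^0_2,\mathbf{\Gamma})=\mathbf{\Gamma}$ after the intersections with $\mathbf{\Pi}^0_2$ sets and unions with $\mathbf{\Sigma}^0_2$ sets that $\mathbf{\Gamma}$ absorbs; hence $\CCC\setminus\RR\in\check{\mathbf{\Gamma}}$, and since $X\notin\check{\mathbf{\Gamma}}$ (non-self-duality), Lemma \ref{wadge} gives $\RR\leq_W X$. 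Third, $\RR\notin\check{\mathbf{\Gamma}}$: by construction there is a non-empty clopen $V\subseteq\CCC$ on which $\RR$ restricts to a homeomorphic copy of $V'\cap X$ for some non-empty open $V'$, which is not in $\check{\mathbf{\Gamma}}$ because $X$ is everywhere properly $\mathbf{\Gamma}$; as $\RR\cap V\leq_W\RR$ this forces $\RR\notin\check{\mathbf{\Gamma}}$, and then Lemma \ref{wadge} together with the fact that $X$ generates $\mathbf{\Gamma}$ yields $X\leq_W\RR$. Combining the last two points gives $\RR\equiv_W X$, as required.
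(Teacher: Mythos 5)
You have correctly assembled the surrounding machinery (non-self-duality of $\mathbf{\Gamma}=[X]$, the closure $\SU(\mathbf{\Sigma}^0_2,\mathbf{\Gamma})=\mathbf{\Gamma}$, Lemmas \ref{closedwadge} and \ref{wadge} for the two Wadge inequalities, and meagerness via containment in an ideal), and your observation that a semiideal is exactly a $\subseteq^\ast$-downward-closed family containing $\varnothing$ and omitting $\omega$ is fine. But the heart of the theorem is the construction of $\RR$, and there the proposal has a genuine gap that you yourself flag: a family of the form ``the support $\{n:x\cap\omega_n\neq\varnothing\}$ is small, and on each active block $x\cap\omega_n$ lies in a copy of $X$'' cannot be made downward-closed, because downward-closure already fails inside a single block (the copy $X_n$ is an arbitrary homogeneous space, not a downward-closed one), and no smallness condition on supports can repair that. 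The block-with-finiteness-conditions template is the one used for the concrete examples in Section 5 and for Lemmas \ref{semifilterrational} and \ref{semifiltertricky}; it is not the recipe that realizes an arbitrary Wadge class, and ``being faithful to van Engelen's construction'' at this point means replacing your construction, not refining it.

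The missing idea is a change of combinatorial coding. Identify $\omega$ with $2^{<\omega}$ and realize $X$ (using Lemma \ref{wadgeindependent}) as a dense subspace of $K=\{\{x\re n:n\in\omega\}:x\in\CCC\}$, i.e.\ as a set of maximal chains of the binary tree. Then let $Y=\hat{X}$ consist of \emph{all} subsets of members of $X$: this family is downward-closed for free, and its Wadge class is still $\mathbf{\Gamma}$ because (i) $X=Y\cap K$ is closed in $Y$, so $X\leq_W Y$ by Lemma \ref{closedwadge}, and (ii) the continuous map sending an infinite subchain to the unique branch containing it pulls $X$ back to $\ddot{X}$, which differs from $Y$ only by a countable set, so $Y\in\mathbf{\Gamma}$ by Lemmas \ref{relativewadgeinverse}, \ref{relativewadgesubspace} and \ref{GdeltaFsigma}. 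Setting $\RR=\{y\cup e:y\in Y,\ e\in\Fin(2^{<\omega})\}$ then yields the semiideal, and your separated-union argument (applied to the closed pieces $\psi_e[Y\cap\DDD_e]$ with $\psi_e(x)=x\cup e$) finishes $\RR\in\mathbf{\Gamma}$, while $X\leq_W\RR$ again comes from Lemma \ref{closedwadge} applied to $Y=\RR\cap\hat{K}$. Without this coding of points as chains --- so that subsets of a point still determine the point --- the construction does not produce a semiideal, and the theorem is not proved.
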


\begin{proof}
We begin by making the following definitions.
\begin{itemize}
\item $\DDD=\PP(2^{<\omega})$.
\item $K=\{\{x\re n:n\in\omega\}:x\in\CCC\}$.
\item Given $S\subseteq K$, let $\hat{S}=\{y\in\DDD:y\subseteq z\text{ for some }z\in S\}$.
\item Given $S\subseteq K$, let $\ddot{S}=\{y\in\hat{S}:y\text{ is infinite}\}$.
\item Given $e\in\Fin(2^{<\omega})$, let $\DDD_e=\{x\in\DDD:x\cap e=\varnothing\}$.
\end{itemize}
It is easy to check that $K$ and $\hat{K}$ are subspaces of $\DDD$ that are homeomorphic to $\CCC$, and that $\ddot{K}$ is a $\mathbf{\Pi}^0_2$ subset of $\DDD$. 

From this point on, assume that a bijection between $\omega$ and $2^{<\omega}$ has been fixed. This will allow us to identify $\CCC$ and $\DDD$. In particular, given any $X\subseteq\DDD$, it will make sense to consider the Wadge class generated by $X$.

By Lemma \ref{wadgeindependent}, we can assume without loss of generality that $X$ is a dense subspace of $K$. Let $\mathbf{\Gamma}=[X]$, and observe that $\mathbf{\Gamma}\in\{\mathbf{\Gamma}_u,\check{\mathbf{\Gamma}}_u\}$ for some $u\in D_0$ by Lemma \ref{nonselfdual}. Define $Y=\hat{X}$.

\textbf{Claim 1}: $Y\equiv_W X$.

The fact that $X\leq_W Y$ follows immediately from Lemma \ref{closedwadge}, because $X=Y\cap K$.
Now consider the unique function $f:\ddot{K}\longrightarrow K$ such that $x\subseteq f(x)$ for every $x\in\ddot{K}$,
and observe that $f$ is continuous. By Lemma \ref{relativewadgeinverse}, it follows that $\ddot{X}=f^{-1}[X]\in\mathbf{\Gamma}(\ddot{K})$. Hence, by Lemma \ref{relativewadgesubspace}, there exists
$X'\in\mathbf{\Gamma}$ such that $\ddot{X}=X'\cap\ddot{K}$. Since $Y=(X'\cap\ddot{K})\cup\Fin(2^{<\omega})$,
it follows from Lemma \ref{GdeltaFsigma} that $Y\in\mathbf{\Gamma}$, which completes the proof of Claim 1.

Now define
$$
\RR=\{y\cup e:y\in Y\text{ and }e\in\Fin(2^{<\omega})\},
$$
and observe that $\RR$ is a semiideal on $2^{<\omega}$.

\textbf{Claim 2}: $\RR\equiv_W X$.

Given $e\in\Fin(2^{<\omega})$, define $\RR_e=\RR\cap\hat{K}\cap\DDD_e=Y\cap\DDD_e$,
and observe that each $\RR_e$ is a semifilter on $2^{<\omega}\setminus e$.
In particular, each $\RR_e$ is homogeneous (by Corollary \ref{semifilterhomogeneous}) and not locally compact.
Notice that each $\RR_e\leq_W Y$, because $\RR_e$ is the intersection of $Y$ with the clopen subset $\DDD_e$ of $\DDD$.
By Claim 1, it follows that each $\RR_e\in\mathbf{\Gamma}$.

Given $e\in\Fin(2^{<\omega})$, define $\psi_e:\DDD\longrightarrow\DDD$ by $\psi_e(x)=x\cup e$,
and observe that $\psi_e\re\RR:\RR\longrightarrow\RR$ is continuous and closed.
Since $\psi_e\re\DDD_e$ is injective, it follows that $\psi_e[\RR_e]\approx\RR_e$.
Let $\{Z_n:n\in\omega\}$ be an enumeration of $\{\psi_e[\RR_e]:e\in\Fin(2^{<\omega})\}$,
and notice that each $Z_n\in\mathbf{\Gamma}$ by Lemma \ref{wadgeindependent}.
Furthermore, it is clear that $\RR=\bigcup_{n\in\omega}Z_n$.

Define $W_n=\cl(Z_n)\setminus\bigcup_{m<n}\cl(Z_m)$ for $n\in\omega$, where $\cl$ denotes closure in $\DDD$,
and observe that each $W_n$ is a $\mathbf{\Sigma}^0_2$ subset of $\DDD$.
Furthermore, using the fact that each $Z_n$ is closed in $\RR$, one can easily check that $\RR=\bigcup_{n\in\omega}(Z_n\cap W_n)$.
This shows that $\RR\in\SU(\mathbf{\Sigma}^0_2,\mathbf{\Gamma})$. Hence $\RR\in\mathbf{\Gamma}$ by Lemma \ref{separatedunions}.

To complete the proof of Claim 2, it remains to show that $X\leq_W \RR$. By Claim 1, it will be enough to show that $Y\leq_W \RR$.
This follows from Lemma \ref{closedwadge}, since $Y=\RR\cap\hat{K}$ is a closed subset of $\RR$ and $[Y]=\mathbf{\Gamma}$ by Claim 1.

\textbf{Claim 3}: $\RR$ is first category.

It is straightforward to check that $\RR$ has the finite union property (that is, $\bigcup F\notin\Cof(2^{<\omega})$ whenever $F\subseteq\RR$ is finite).
Therefore, $\RR\subseteq\II$ for some ideal $\II$ on $2^{<\omega}$. It follows easily from \cite[Theorem 8.47]{kechris} that $\RR$ is first category.
\end{proof}

\begin{corollary}\label{bairesemifilter}
Let $X$ be a homogeneous dense Borel subspace of $\CCC$. Assume that $X$ is Baire and $X\notin\Delta$.
Then there exists a Baire semifilter $\Ss$ such that $\Ss\equiv_W X$.
\end{corollary}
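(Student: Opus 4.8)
The plan is to derive this from Theorem \ref{firstcategorysemiideal} by passing to complements. Regard $X$ as a homogeneous dense Borel subspace of $\CCC$ that is Baire and not in $\Delta$, and consider $\CCC\setminus X$. The strategy is: (i) verify that $\CCC\setminus X$ satisfies the hypotheses of Theorem \ref{firstcategorysemiideal}; (ii) use that theorem to get a first category semiideal $\RR$ with $\RR\equiv_W\CCC\setminus X$; (iii) take $\Ss=\CCC\setminus\RR$, observe it is a semifilter, and check that it is Baire with $\Ss\equiv_W X$.

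For step (i) I would argue as follows. The set $\CCC\setminus X$ is Borel, and it is not in $\Delta$: indeed, for subspaces of $\CCC$ one has $X\in\Delta$ iff $X\in\Delta(\Diff^{\CCC}_\omega(\mathbf{\Sigma}^0_2))$, and $\Delta(\Diff^{\CCC}_\omega(\mathbf{\Sigma}^0_2))=\Diff^{\CCC}_\omega(\mathbf{\Sigma}^0_2)\cap\check{\Diff}^{\CCC}_\omega(\mathbf{\Sigma}^0_2)$ is self-dual, so $\CCC\setminus X\notin\Delta$. By Corollary \ref{homogeneouscomplement}, $\CCC\setminus X$ is homogeneous. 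To see that $\CCC\setminus X$ is dense in $\CCC$, note first that $X$ is not locally compact (a zero-dimensional homogeneous locally compact space is discrete, $\CCC$, or $\omega\times\CCC$, all of which lie in $\Delta$); since local compactness at a point transfers across a homogeneous space, $X$ is nowhere locally compact, hence no basic clopen set $[s]$ is contained in $X$ (otherwise $[s]=[s]\cap X$ would be a compact clopen neighbourhood in $X$). Thus $X$ has empty interior in $\CCC$, i.e. $\CCC\setminus X$ is dense. Finally $\CCC\setminus X$ is first category: being homogeneous it is first category or Baire, and if it were Baire then, since both $X$ and $\CCC\setminus X$ are Baire Borel spaces, each would contain a dense complete (hence dense $G_\delta$) subspace of $\CCC$, giving two disjoint dense $G_\delta$ subsets of $\CCC$ — impossible by the Baire category theorem.

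For steps (ii) and (iii): apply Theorem \ref{firstcategorysemiideal} to $\CCC\setminus X$ to obtain a first category semiideal $\RR$ on $\omega$ with $\RR\equiv_W\CCC\setminus X$, and set $\Ss=\CCC\setminus\RR$. A routine verification shows that the set-complement of a semiideal on $\omega$ is a semifilter on $\omega$, so $\Ss$ is a semifilter. Since any continuous $f$ witnessing $A\leq_W B$ also witnesses $(\CCC\setminus A)\leq_W(\CCC\setminus B)$, from $\RR\equiv_W\CCC\setminus X$ we get $\Ss=\CCC\setminus\RR\equiv_W\CCC\setminus(\CCC\setminus X)=X$. It remains to check that $\Ss$ is Baire. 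Because $\RR\supseteq\Fin$, the semiideal $\RR$ is dense in $\CCC$; being also first category, $\RR$ is meager in $\CCC$ (if $N$ is closed and nowhere dense in the dense subspace $\RR$, then $\cl(N)$ is nowhere dense in $\CCC$, so a countable cover of $\RR$ by such $N$ yields a meager cover of $\RR$ in $\CCC$). Hence $\Ss=\CCC\setminus\RR$ is comeager in $\CCC$, so it is not meager in $\CCC$. On the other hand $\Ss$ is homogeneous by Corollary \ref{semifilterhomogeneous}, so it is first category or Baire; it cannot be first category, for then — being dense in $\CCC$, as every semifilter is — it would be meager in $\CCC$. Therefore $\Ss$ is a Baire semifilter with $\Ss\equiv_W X$, as required.

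The only genuine work is step (i), the verification that $\CCC\setminus X$ meets the hypotheses of Theorem \ref{firstcategorysemiideal}; in particular, that $\CCC\setminus X$ is dense (which rests on $X$ being nowhere locally compact) and first category (which rests on the two-disjoint-dense-$G_\delta$ argument). Everything else is bookkeeping with complementation and the Baire category theorem.
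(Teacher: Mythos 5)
Your proposal is correct and follows essentially the same route as the paper: pass to $\CCC\setminus X$, check it is a homogeneous dense first category Borel subspace of $\CCC$ not in $\Delta$ (the paper gets first category directly from a dense complete subspace $G\subseteq X$, which is equivalent to your two-disjoint-dense-$G_\delta$ argument), apply Theorem \ref{firstcategorysemiideal}, and complement the resulting semiideal. The extra details you supply (self-duality of $\Delta$, the density and Baireness verifications) are exactly the steps the paper leaves as "straightforward."
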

\begin{proof}
Notice that $\CCC\setminus X$ must also be dense in $\CCC$, otherwise $X$ would be locally compact.
Furthermore, since $X$ is Borel and Baire, it has a complete dense subspace $G$.
Since $G$ must be a dense $\mathbf{\Pi}^0_2$ subset of $\CCC$, it follows that $\CCC\setminus X$ is first category.
Also observe that $\CCC\setminus X$ is homogeneous by Corollary \ref{homogeneouscomplement}.
In conclusion, it is possible to apply Theorem \ref{firstcategorysemiideal}, which yields a first category semiideal $\RR$ such that $\RR\equiv_W\CCC\setminus X$.
Obviously, this implies that $\Ss\equiv_W X$, where $\Ss=\CCC\setminus\RR$. It is straightforward to check that $\Ss$ is a Baire semifilter.
\end{proof}

\section{Open problems}

The main open problem is of course whether the assumption ``Borel'' in Theorem \ref{main} can be weakened. First of all, we will show that it cannot be altogether removed. This is an immediate consequence of the following two propositions. The first one is trivial, and the second one follows from \cite[Proposition 8.3]{medinic} and Lemma \ref{pibase}. Recall that a \emph{$\lambda$-set} is a space in which every countable set is $\mathbf{\Pi}^0_2$. Observe that no subspace of a $\lambda$-set can be homeomorphic to $\CCC$.
\begin{proposition}
Let $\Ss$ be an uncountable semifilter. Then $\Ss$ contains a subspace that is homeomorphic to $\CCC$.
\end{proposition}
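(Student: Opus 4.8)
The plan is to produce the required copy of $\CCC$ completely explicitly, using only upward-closure. First I would observe that the cofinite subsets of $\omega$ form a countable collection, so since $\Ss$ is uncountable we may fix some $x\in\Ss$ with $\omega\setminus x$ infinite (that is, $x\notin\Cof$). Next, since $\Ss$ is upward-closed and $x\in\Ss$, the entire ``cone''
$$
C=\{y\subseteq\omega:x\subseteq y\}
$$
is contained in $\Ss$. So it only remains to check that $C$, with the subspace topology inherited from $\CCC$, is homeomorphic to $\CCC$.

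For that last point I would invoke the characterization of $\CCC$ recalled in Section 2: a zero-dimensional space is homeomorphic to $\CCC$ if and only if it is dense in itself, compact, and zero-dimensional. Now $C=\bigcap_{n\in x}\{y\in\CCC:n\in y\}$ is an intersection of clopen subsets of $\CCC$, hence closed, hence compact and zero-dimensional; and it is dense in itself precisely because $\omega\setminus x$ is infinite: given $y\in C$ and $m\in\omega$, pick $n\in\omega\setminus x$ with $n\geq m$ and flip the $n$-th coordinate of $y$ to obtain a point of $C$ distinct from, but arbitrarily close to, $y$. (Equivalently, restriction to $\omega\setminus x$ is a homeomorphism of $C$ onto $2^{\omega\setminus x}\approx\CCC$.)

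There is essentially no obstacle here; the only things to verify are the three routine facts that $\Cof$ is countable, that $C\subseteq\Ss$, and that $C$ has no isolated points, none of which uses the semifilter axioms beyond upward-closure and the size hypothesis on $\Ss$. Accordingly I would expect the write-up to be just a few lines.
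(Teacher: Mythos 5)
Your proposal is correct and is essentially identical to the paper's proof: the paper also picks $\Omega\in\Ss\setminus\Cof$ (possible since $\Ss$ is uncountable while $\Cof$ is countable) and observes that the cone $\{x\subseteq\omega:\Omega\subseteq x\}\subseteq\Ss$ is a copy of $\CCC$. Your extra verification that the cone is compact, zero-dimensional, and dense in itself is exactly the routine check the paper leaves to the reader.
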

\begin{proof}
Since $\Ss$ is uncountable, we can fix $\Omega\in\Ss\setminus\Cof$. It is easy to realize that $\{x\subseteq\omega:\Omega\subseteq x\}$ is the desired subspace of $\Ss$.
\end{proof}
\begin{proposition}
There exists a homogeneous subspace of $\CCC$ of size $\omega_1$ that is a $\lambda$-set.
\end{proposition}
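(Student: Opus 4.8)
The plan is to obtain the desired space by applying Terada's Lemma~\ref{pibase} to a suitably self-similar $\lambda$-set. First I would record the elementary facts that make everything except the construction automatic. A $\lambda$-set contains no copy of $\CCC$ (as noted in the excerpt), while by a Cantor--Bendixson argument every uncountable closed subspace of $\CCC$ contains a nonempty perfect compact subspace and hence, by the topological characterization of $\CCC$, a copy of $\CCC$; since a compact subspace of $\CCC$ is closed, it follows that no uncountable $\lambda$-set can be realized as a compact subspace of $\CCC$. Thus any $\lambda$-set of size $\omega_1$ that is a subspace of $\CCC$ is automatically zero-dimensional and not compact.

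The heart of the argument is therefore to produce a $\lambda$-set $X\subseteq\CCC$ of size $\omega_1$ that is dense in $\CCC$ and satisfies $[s]\cap X\approx X$ for every $s\in 2^{<\omega}$. Granting this, the family $\BB=\{[s]\cap X:s\in 2^{<\omega}\}$ is a base of $X$ consisting of clopen copies of $X$, and $X$ is not compact by the previous paragraph; so Lemma~\ref{pibase} shows that $X$ is strongly homogeneous, and since $X$ is zero-dimensional this yields that $X$ is homogeneous. Hence $X$ is a homogeneous subspace of $\CCC$ of size $\omega_1$ that is a $\lambda$-set, as required.

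The existence of such an $X$ is exactly \cite[Proposition 8.3]{medinic}. I expect its proof to be a recursion of length $\omega_1$ in which a single point is added at each step: one uses fixed homeomorphisms between the basic clopen sets of $\CCC$, together with a book-keeping argument, to keep all the pieces $[s]\cap X$ mutually homeomorphic, while simultaneously ensuring that each countable initial segment of $X$ remains a relative $\mathbf{\Pi}^0_2$ subset, so that $X$ stays a $\lambda$-set. (One should also note that a subspace of a $\lambda$-set is again a $\lambda$-set, so there is no loss in insisting that the size be exactly $\omega_1$.)

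This construction is the main obstacle: the $\lambda$-set requirement is a condition on countable subsets that may be ``completed'' at any stage of the recursion, whereas the self-similarity requirement forces the points to be added in a rigid pattern, and reconciling the two is delicate. By contrast, zero-dimensionality, non-compactness, and the passage from strong homogeneity to homogeneity are all immediate from the facts recalled in the excerpt.
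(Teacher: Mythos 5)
Your proposal is correct and follows essentially the same route as the paper, which likewise obtains the space by combining \cite[Proposition 8.3]{medinic} with Terada's Lemma \ref{pibase} to pass from self-similarity to strong homogeneity and hence homogeneity. The supporting observations you add (non-compactness of an uncountable $\lambda$-set in $\CCC$, closure of $\lambda$-sets under subspaces) are accurate and harmless.
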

Furthermore, by a classical theorem of Martin and Solovay from \cite{martinsolovay} (see also \cite[Theorem 23.3]{miller} or \cite[Theorem 8.1]{medinizdomskyy}), it is consistent that every subspace of $\CCC$ of size $\omega_1$ is coanalytic.
Hence, it is consistent that Theorem \ref{main} fails when ``Borel'' is weakened to ``coanalytic''. However, we do not know the answer to the following questions. As in \cite[page 315]{kechris}, we will say that a space is \emph{projective} if it is homeomorphic to a projective subspace of some complete space (see \cite[Section 4]{medinizdomskyy} for a more detailed treatment).
For the definition of Projective Determinacy, see \cite[Definition 38.15]{kechris}.
\begin{question}
Can Theorem \ref{main} be generalized to analytic spaces in $\ZFC$?
\end{question}
\begin{question}\label{dethard}
Assume Projective Determinacy. Can Theorem \ref{main} be generalized to all projective spaces?
\end{question}

The assumption of Projective Determinacy is natural because it ensures that Theorem \ref{steeltheorem} generalizes to projective subspaces of $\CCC$ (see the original statement of the theorem in \cite{steel}).
In the next section, we will make the very first step towards answering Question \ref{dethard}.
On the other hand, answering Question \ref{dethard} in full using the same strategy as in this paper would require a detailed analysis, in the spirit of \cite{louveau}, of the Wadge classes generated by projective subsets of $\CCC$.
This problem, unsolved even at the lowest levels of the projective hierarchy, is the object of current research (see \cite{fournier}).
However, it might be possible to circumvent this obstacle by using a more direct approach.

Finally, since every semifilter is homogeneous by Corollary \ref{semifilterhomogeneous}, the following question seems natural.
\begin{question}
Is every semifilter strongly homogeneous?
\end{question}
We remark that the answer to the above question is affirmative for Borel semifilters (by Corollary \ref{borelsemifilterstronglyhomogeneous}) and for filters (by the following result).
\begin{proposition}\label{filterstronglyhomogeneous}
Let $\FF$ be a filter. Then $\FF$ is strongly homogeneous.
\end{proposition}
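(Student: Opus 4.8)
The plan is to pass from filters to ideals and then verify the hypotheses of Terada's Lemma \ref{pibase} directly. Since $\FF\approx c[\FF]$ and $c[\FF]$ is an ideal (by the discussion in Section 3), it is enough to show that every ideal $\RR$ is strongly homogeneous; recall that $\Fin\subseteq\RR$ and that $\RR$ is a subgroup of $\CCC$. The sets $[s]\cap\RR$ for $s\in 2^{<\omega}$ form a base of clopen subsets of $\RR$, each non-empty because it is met by the characteristic function of the set coded by $s$, which belongs to $\Fin$; and $\RR$ is not compact, being a proper dense subset of $\CCC$. So by Lemma \ref{pibase} it suffices to prove that $[s]\cap\RR\approx\RR$ for every $s$. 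Writing $m=|s|$ and $F=\{k<m:s(k)=1\}$ and identifying $m$ with $\{0,\dots,m-1\}$, the homeomorphism $h_F$ from Section 3 preserves $\RR$ (closure under finite modifications) and maps $V_m:=\{A\in\RR:A\cap m=\varnothing\}$ onto $[s]\cap\RR$; so it is enough to show that $V_m\approx\RR$.

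This last homeomorphism is the crux. If $\RR=\Fin$ it is immediate, since then $V_m$ and $\RR$ are both countable and dense in themselves, hence homeomorphic to $\QQQ$. Otherwise I would fix an infinite set $I=\{i_0<i_1<\cdots\}\in\RR$ with $I\cap m=\varnothing$ (remove the first $m$ points from any infinite member of $\RR$; note $I$ is automatically coinfinite, as $\RR$ is proper) and define a ``Hilbert-hotel'' shift $\Psi\colon\CCC\to\CCC$ by
$$
\Psi(A)=\{i_j:j<m\text{ and }j\in A\}\cup\{i_{k+m}:k<\omega\text{ and }i_k\in A\}\cup\bigl(A\setminus(m\cup I)\bigr).
$$
One checks that $\Psi$ is a homeomorphism of $\CCC$ onto $\{B\in\CCC:B\cap m=\varnothing\}$, with inverse recovering $A\cap m$ from the first $m$ elements of $I$, $A\cap I$ from the remaining elements of $I$, and $A$ off $m\cup I$ unchanged. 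The point is that $\Psi$ carries $\RR$ exactly onto $V_m$: for $A\in\RR$ the three pieces of $\Psi(A)$ are, respectively, a finite set, a subset of $I\in\RR$, and a subset of $A$, hence all members of $\RR$, so $\Psi(A)\in\RR$; and the same computation applied to $\Psi^{-1}$ shows $\Psi^{-1}[V_m]\subseteq\RR$. Therefore $\Psi\re\RR\colon\RR\to V_m$ is a homeomorphism, and the proof is complete.

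The main obstacle is precisely the construction of $\Psi$. Decomposing $\RR$ into cosets of the clopen subgroup $V_m$ (there are $2^m$ of them, since $\Fin\subseteq\RR$) only yields $\RR\approx 2^m\times V_m$, and there is no group-theoretic reason for the factor $2^m$ to be absorbed; what makes $V_m\approx\RR$ work is the combination of downward-closure with the existence of an infinite member of $\RR$, which supplies enough room to swallow the $m$ frozen coordinates by an index shift that never leaves $\RR$. The degenerate ideal $\RR=\Fin$, which has no infinite member, is exactly the case where this mechanism fails, and it must be handled separately via the characterization of $\QQQ$.
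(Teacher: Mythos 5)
Your proof is correct and is essentially the paper's argument in dual form: the paper also reduces to Terada's Lemma \ref{pibase} by showing $\FF\cap[s]\approx\FF$, handles $\FF=\Cof$ separately, and for $\FF\supsetneq\Cof$ builds the same kind of ``Hilbert-hotel'' homeomorphism, fixing the coordinates on an $\Omega\in\FF\setminus\Cof$ and re-indexing the (infinite) complement of $\Omega$ to absorb the $\ell$ frozen coordinates. Your shift $\Psi$ along an infinite $I\in\RR$ is exactly the complemented version of that map, so the two proofs coincide up to the filter--ideal duality.
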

\begin{proof}
By Lemma \ref{pibase}, it will be enough to show that $\FF\cap [s]\approx\FF$ for every $s\in 2^{<\omega}$. So let $\ell\in\omega$ and $s:\ell\longrightarrow 2$. Since the case $\FF=\Cof$ is trivial, assume that $\FF\supsetneq\Cof$, and fix $\Omega\in\FF\setminus\Cof$ such that $\Omega\cap\ell=\varnothing$. Since $\Omega\notin\Cof$, we can fix a bijection $\pi:\omega\setminus\Omega\longrightarrow\omega\setminus(\ell\cup\Omega)$. Define the function $h:\CCC\longrightarrow [s]$ by setting
$$
\left.
\begin{array}{lcl}
& & h(x)(n)= \left\{
\begin{array}{ll}
s(n) & \textrm{if }n\in\ell,\\
x(n) & \textrm{if }n\in\Omega,\\
x(\pi^{-1}(n)) & \textrm{if }n\in\omega\setminus(\ell\cup\Omega).
\end{array}
\right.
\end{array}
\right.
$$
It is straightforward to check that $h$ is a homeomorphism. Furthermore, it is clear that $h(x)\cap\Omega= x\cap\Omega$ for every $x\in\CCC$. Since $\Omega\in\FF$, it follows that $h[\FF]=\FF\cap[s]$.
\end{proof}

\section{Analytic and coanalytic homogeneous spaces}

We will denote by $\mathbf{\Sigma}^1_1$ the collection of all analytic subsets of $\CCC$, and by $\mathbf{\Pi}^1_1$ the collection of all coanalytic subsets of $\CCC$.
We will say that a space is \emph{properly analytic} if it is analytic and not coanalytic. We will say that a space is \emph{properly coanalytic} if it is coanalytic and not analytic.
In this section we will show that, under $\mathbf{\Sigma}^1_1$-Determinacy (see \cite[Definition 26.3]{kechris}), Theorem \ref{main} extends to all analytic and coanalytic spaces.
This follows from Corollary \ref{determinacyextend}, together with the fact that every analytic coanalytic space is Borel (this is a classical result of Souslin, see \cite[Theorem 14.11]{kechris}).

Once again, our main tool will be \cite[Theorem 2]{steel}. Next, we state explicitly the version of this result that we will need. Notice that $\mathbf{\Sigma}^1_1$ and $\mathbf{\Pi}^1_1$ are reasonably closed by \cite[Proposition 14.4]{kechris}.
\begin{theorem}[Steel]\label{steeltheorem2}
Assume $\mathbf{\Sigma}^1_1$-Determinacy. Let $\mathbf{\Gamma}\in\{\mathbf{\Sigma}^1_1,\mathbf{\Pi}^1_1\}$.
Let $X$ and $Y$ be subspaces of $\CCC$ that satisfy the following conditions.
\begin{itemize}
\item $X,Y$ are everywhere properly $\mathbf{\Gamma}$.
\item $X,Y$ are either both first category or both Baire.
\end{itemize}
Then there exists a homeomorphism $h:\CCC\longrightarrow \CCC$ such that $h[X]=Y$.
\end{theorem}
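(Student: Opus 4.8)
The plan is to obtain Theorem~\ref{steeltheorem2} as the instance of the general form of Steel's theorem, \cite[Theorem 2]{steel}, corresponding to $\mathbf{\Gamma}\in\{\mathbf{\Sigma}^1_1,\mathbf{\Pi}^1_1\}$, in exact parallel with the way Theorem~\ref{steeltheorem} is its Borel instance (once again we follow the exposition of \cite[Section 4.1]{vanengelent}). Recall that \cite[Theorem 2]{steel} is proved for an arbitrary pointclass $\mathbf{\Gamma}$ satisfying suitable closure hypotheses, under the assumption that the games occurring in the argument are determined; for the Borel pointclass this assumption holds outright by Martin's theorem (\cite[Theorem 20.5]{kechris}), while in the present case it is to be supplied by $\mathbf{\Sigma}^1_1$-Determinacy. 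So the proof reduces to two verifications.

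First, I would check that $\mathbf{\Gamma}\in\{\mathbf{\Sigma}^1_1,\mathbf{\Pi}^1_1\}$ is reasonably closed, in the sense defined just before the statement. Continuous closure of both pointclasses is classical. For the other requirement, recall that $\phi$ is continuous on its Borel domain $\CCC\setminus(Q_0\cup Q_1)$, and that $Q_0$ is countable, hence $\mathbf{\Sigma}^0_2$; since $\mathbf{\Sigma}^1_1$ and $\mathbf{\Pi}^1_1$ are both closed under Borel preimages and under unions with Borel sets, it follows that $\phi^{-1}[A]\cup Q_0\in\mathbf{\Gamma}$ whenever $A\in\mathbf{\Gamma}$. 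All of this is contained in \cite[Proposition 14.4]{kechris}, as already remarked above.

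Second, I would inspect the proof of \cite[Theorem 2]{steel} to identify precisely which games it requires to be determined when $\mathbf{\Gamma}$ is one of $\mathbf{\Sigma}^1_1$, $\mathbf{\Pi}^1_1$. These are Wadge-type games built from $X$ and $Y$, whose payoff sets sit in the finite difference hierarchy generated by $\mathbf{\Gamma}$, its dual $\check{\mathbf{\Gamma}}$, and the clopen sets. For $\mathbf{\Gamma}\in\{\mathbf{\Sigma}^1_1,\mathbf{\Pi}^1_1\}$ the determinacy of all such games follows from $\mathbf{\Sigma}^1_1$-Determinacy (note that $\mathbf{\Sigma}^1_1$-Determinacy and $\mathbf{\Pi}^1_1$-Determinacy coincide, so that both cases $\mathbf{\Gamma}=\mathbf{\Sigma}^1_1$ and $\mathbf{\Gamma}=\mathbf{\Pi}^1_1$ are covered). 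Indeed, this is the very reason the hypothesis of the theorem is stated as $\mathbf{\Sigma}^1_1$-Determinacy, in analogy with the role of Projective Determinacy at the higher projective levels discussed in Section~13 and in \cite{steel}. Granting these two points, Steel's construction produces, for any $X$ and $Y$ that are everywhere properly $\mathbf{\Gamma}$ and are either both first category or both Baire, a homeomorphism $h:\CCC\longrightarrow\CCC$ with $h[X]=Y$, which is exactly the assertion.

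I expect the only delicate part to be the bookkeeping in the second step: one must extract from \cite{steel} the exact determinacy hypothesis it invokes, at the exact pointclass level, and confirm that $\mathbf{\Sigma}^1_1$-Determinacy is enough rather than something strictly stronger being secretly needed. Since this is addressed in the original paper, and is why the theorem is stated with precisely this hypothesis, no argument beyond careful citation should be required.
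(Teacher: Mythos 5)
Your proposal is correct and follows essentially the same route as the paper, which offers no argument beyond citing \cite[Theorem 2]{steel} and remarking that $\mathbf{\Sigma}^1_1$ and $\mathbf{\Pi}^1_1$ are reasonably closed by \cite[Proposition 14.4]{kechris}. Your additional verification of reasonable closure and your identification of $\mathbf{\Sigma}^1_1$-Determinacy as exactly the hypothesis Steel's argument needs at this pointclass level are just a more explicit version of the same citation-based reduction.
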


\begin{lemma}\label{everywhere}
Let $\mathbf{\Gamma}\in\{\mathbf{\Sigma}^1_1,\mathbf{\Pi}^1_1\}$.
Assume that $X$ is a homogeneous dense subspace of $\CCC$ such that $[X]=\mathbf{\Gamma}$.
Then $X$ is everywhere properly $\mathbf{\Gamma}$.
\end{lemma}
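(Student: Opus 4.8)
The plan is to fix an arbitrary non-empty open set $U\subseteq\CCC$ and check separately that $U\cap X\in\mathbf{\Gamma}$ and that $U\cap X\notin\check{\mathbf{\Gamma}}$.

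Membership in $\mathbf{\Gamma}$ is routine. Since $[X]=\mathbf{\Gamma}$, in particular $X\in\mathbf{\Gamma}$. Write $U$ as a countable disjoint union of clopen sets $C_n$ (possible because $\CCC$ is zero-dimensional). For each $n$ one has $C_n\cap X\leq_W X$: fix a point $p\in\CCC\setminus X$ (which exists since $[X]=\mathbf{\Gamma}$ forces $X\neq\CCC$) and let $f\colon\CCC\longrightarrow\CCC$ be the identity on $C_n$ and constantly $p$ off $C_n$, so that $f^{-1}[X]=C_n\cap X$. Hence each $C_n\cap X\in[X]=\mathbf{\Gamma}$, and since $\mathbf{\Sigma}^1_1$ and $\mathbf{\Pi}^1_1$ are closed under countable unions we get $U\cap X=\bigcup_n(C_n\cap X)\in\mathbf{\Gamma}$.

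The heart of the matter is showing $U\cap X\notin\check{\mathbf{\Gamma}}$, which I would do by contradiction. Suppose $U\cap X\in\check{\mathbf{\Gamma}}$. Together with $U\cap X\in\mathbf{\Gamma}$ and the equality $\mathbf{\Sigma}^1_1\cap\mathbf{\Pi}^1_1=\mathbf{\Delta}^1_1$ (Souslin's theorem, \cite[Theorem 14.11]{kechris}), this gives that $U\cap X$ is an absolute Borel set. Now homogeneity enters: because $X$ is dense in $\CCC$ and $U$ is open and non-empty, $U\cap X$ is a non-empty open subspace of $X$; fix $x_0\in U\cap X$, and for each $y\in X$ choose a homeomorphism $h_y\colon X\longrightarrow X$ with $h_y(x_0)=y$. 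Then $\{h_y[U\cap X]:y\in X\}$ is an open cover of $X$, each of whose members is homeomorphic to the absolute Borel space $U\cap X$. Since $X$ is second countable, hence Lindel\"of, some countable subfamily already covers $X$, so $X=\bigcup_{n\in\omega}h_{y_n}[U\cap X]$ is a countable union of Borel subsets of $\CCC$, and therefore $X$ itself is a Borel subset of $\CCC$. This is absurd: if $X$ were Borel then every continuous preimage of $X$ would be Borel, so $[X]\subseteq\mathbf{\Delta}^1_1\subsetneq\mathbf{\Gamma}$, contradicting $[X]=\mathbf{\Gamma}$.

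I expect the main obstacle to be precisely the passage from ``the single open piece $U\cap X$ is Borel'' to ``$X$ is Borel''; this is where homogeneity is indispensable, the idea being that homogeneity lets one cover $X$ by homeomorphic copies of any prescribed non-empty open subspace, after which Lindel\"ofness reduces the cover to a countable one. It is worth noting that, in contrast to the Borel case treated via Theorem~\ref{hhomogeneouslyembedded} and Theorem~\ref{steelconsequence}, this lemma needs neither any h-homogeneous-embedding argument nor any determinacy hypothesis; determinacy is only invoked afterwards, through Steel's Theorem~\ref{steeltheorem2}.
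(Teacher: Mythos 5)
Your proposal is correct and follows essentially the same route as the paper: reduce membership in $\mathbf{\Gamma}$ to the clopen case plus closure under countable unions, and for the properness part use homogeneity together with Lindel\"ofness to cover $X$ by countably many homeomorphic copies of $U\cap X$, deriving via Souslin's theorem that $X$ would be Borel, contradicting $[X]=\mathbf{\Gamma}$. The only cosmetic difference is that you apply Souslin's theorem to $U\cap X$ before taking the countable union, whereas the paper first concludes $X\in\check{\mathbf{\Gamma}}$ and applies Souslin to $X$ itself.
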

\begin{proof}
Let $U$ be a non-empty clopen subset of $\CCC$. It is trivial to show that $U\cap X\leq_W X$. Therefore $U\cap X\in\mathbf{\Gamma}$. Using the fact that $\mathbf{\Gamma}$ is closed under countable unions (see \cite[Proposition 14.4]{kechris}),
one sees that $U\cap X\in\mathbf{\Gamma}$ for every non-empty open subset $U$ of $\CCC$.

Now assume, in order to get a contradiction, that $U$ is a non-empty open subset $U$ of $\CCC$ such that $U\cap X\in\check{\mathbf{\Gamma}}$.
Since $X$ is homogeneous, it is possible to find $X_n\subseteq\CCC$ for $n\in\omega$ such that each $X_n\approx U\cap X$ and $\bigcup_{n\in\omega}X_n=X$.
Since each $X_n\in\check{\mathbf{\Gamma}}$ and $\check{\mathbf{\Gamma}}$ is closed under countable unions, we obtain that $X\in\check{\mathbf{\Gamma}}$.
Using \cite[Theorem 14.11]{kechris}, it follows that $X$ is Borel, which contradicts the assumption $[X]=\mathbf{\Gamma}$.
\end{proof}

The following well-known result (see \cite[Theorems 3.2, 3.3, 3.4 and 3.6]{farkaskhomskiividnyanszky}) is the last ingredient that will be needed in the proof of Theorem \ref{determinacyfilter}.

\begin{proposition}\label{existsfilter}
Let $\mathbf{\Gamma}\in\{\mathbf{\Sigma}^1_1,\mathbf{\Pi}^1_1\}$. Then there exists a filter $\FF$ such that $[\FF]=\mathbf{\Gamma}$.
\end{proposition}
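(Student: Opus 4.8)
The plan is to reduce the statement to the existence, for each $\mathbf{\Gamma}\in\{\mathbf{\Sigma}^1_1,\mathbf{\Pi}^1_1\}$, of a filter that is \emph{$\mathbf{\Gamma}$-complete} in the Wadge sense. Indeed, if $\FF$ is a filter with $\FF\in\mathbf{\Gamma}$ and $A\leq_W\FF$ for every $A\in\mathbf{\Gamma}$, then $[\FF]=\mathbf{\Gamma}$: the inclusion $[\FF]\subseteq\mathbf{\Gamma}$ holds because $\mathbf{\Sigma}^1_1$ and $\mathbf{\Pi}^1_1$ are closed under continuous preimages (see \cite[Proposition 14.4]{kechris}), and the reverse inclusion is the assumed hardness. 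Moreover, it suffices to produce a $\mathbf{\Gamma}$-complete \emph{ideal} $\II$: by the discussion in Section 3, $\FF=c[\II]$ is then a filter, and since $c$ is a homeomorphism of $\CCC$ onto itself one checks immediately that $\II$ and $c[\II]$ are Wadge-equivalent, so $[\FF]=[\II]=\mathbf{\Gamma}$. Note that hardness will be verified by an explicit continuous reduction from one fixed $\mathbf{\Gamma}$-complete set, so that no determinacy assumption enters.

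For $\mathbf{\Gamma}=\mathbf{\Pi}^1_1$ this is routine. Identify $\omega$ with $\omega^{<\omega}$ and let $\II$ be the collection of all $A\subseteq\omega^{<\omega}$ that contain no infinite $\subseteq$-chain (equivalently, after identifying $\omega$ with $\QQQ$, the ideal of well-ordered subsets of $\QQQ$). A straightforward check shows that $\II$ is an ideal: it is downward closed, closed under finite modifications (a finite superset of a chain-free set is chain-free), and closed under finite unions (an infinite chain in $A\cup B$ has an infinite subchain contained in $A$ or in $B$), while $\omega^{<\omega}\notin\II$. Since every infinite $\subseteq$-chain in $\omega^{<\omega}$ is contained in a branch, $A\in\II$ if and only if for every $x\in\omega^\omega$ only finitely many $n$ satisfy $x\re n\in A$; as this is a $\forall x$ over a $\mathbf{\Sigma}^0_2$ condition, $\II\in\mathbf{\Pi}^1_1$. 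For hardness, recall that the set of well-founded trees on $\omega$ is $\mathbf{\Pi}^1_1$-complete and that the reductions witnessing this can be taken with values in the space of trees; since a tree is chain-free precisely when it is well-founded, any such reduction $f$ for a given $\mathbf{\Pi}^1_1$ set $C$ also satisfies $f^{-1}[\II]=C$. Hence $[\II]=\mathbf{\Pi}^1_1$, and $\FF=c[\II]$ is as required.

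The real obstacle is the case $\mathbf{\Gamma}=\mathbf{\Sigma}^1_1$. The natural dual candidates for a $\mathbf{\Sigma}^1_1$-complete filter --- ``$A$ contains an infinite $\subseteq$-chain'' on $\omega^{<\omega}$, or ``$A$ contains a copy of $\QQQ$'' on $\QQQ$ --- are upward closed, closed under finite modifications, and $\mathbf{\Sigma}^1_1$-complete, but they are only \emph{semifilters}: they fail closure under finite intersections, since the intersection of two sets each containing an infinite chain (respectively a copy of $\QQQ$) need contain none --- take two branches of $\omega^{<\omega}$ sharing only a finite initial segment, or a partition of $\QQQ$ into two copies of $\QQQ$. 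More generally, the feature that makes an ``$\exists$-real'' genuinely essential, hence the set non-Borel, appears to conflict with closure under finite intersections, which is exactly why a filter is harder to obtain here than a semifilter. To get around this I would use the construction of a $\mathbf{\Sigma}^1_1$-complete filter carried out by Farkas, Khomskii and Vidny\'anszky in \cite[Theorems 3.2, 3.3, 3.4 and 3.6]{farkaskhomskiividnyanszky}, and then invoke the reduction of the first paragraph. I expect this to be the one genuinely delicate step: the rest is bookkeeping, whereas building an analytic set that is at once $\mathbf{\Sigma}^1_1$-hard and an honest filter requires a dedicated argument.
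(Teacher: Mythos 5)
Your proposal is correct. Note that the paper itself offers no proof of this proposition: it is stated as a known result with a pointer to \cite[Theorems 3.2, 3.3, 3.4 and 3.6]{farkaskhomskiividnyanszky}, which is exactly the reference you fall back on for the $\mathbf{\Sigma}^1_1$ case. Your preliminary reduction is sound: $[\FF]=\mathbf{\Gamma}$ is indeed equivalent to $\FF$ being $\mathbf{\Gamma}$-complete for Wadge reducibility, and since $c$ is a self-homeomorphism of $\CCC$ interchanging filters and ideals and preserving Wadge degree, a complete ideal suffices. Your explicit treatment of the $\mathbf{\Pi}^1_1$ half --- the ideal of chain-free (well-founded) subsets of $\omega^{<\omega}$, verified to be an honest ideal, shown to be $\mathbf{\Pi}^1_1$ by the branch characterization, and shown hard via the standard reduction to well-founded trees --- is a correct, self-contained argument that the paper does not record, and it rightly avoids any appeal to determinacy by exhibiting the reduction explicitly (the paper's $\mathbf{\Sigma}^1_1$-Determinacy hypothesis enters only later, through Steel's theorem, not here). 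Your diagnosis of the $\mathbf{\Sigma}^1_1$ case is also accurate: the natural dual candidates are $\mathbf{\Sigma}^1_1$-complete semifilters that fail closure under finite intersections, and producing a genuine $\mathbf{\Sigma}^1_1$-complete filter requires the dedicated construction of Farkas, Khomskii and Vidny\'anszky, so deferring to them there is both legitimate and exactly what the paper does. In sum: same source for the hard half, and a more elementary, explicit argument than the paper bothers to give for the other half.
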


\begin{theorem}\label{determinacyfilter}
Assume $\mathbf{\Sigma}^1_1$-Determinacy. Let $X$ be a zero-dimensional homogeneous space. Assume that $X$ is properly analytic or properly coanalytic.
\begin{itemize}
\item If $X$ is first category then $X\approx\FF$ for some filter $\FF$.
\item If $X$ is Baire then $X\approx\CCC\setminus\FF$ for some filter $\FF$.
\end{itemize}
\end{theorem}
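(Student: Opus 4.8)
The plan is to follow the proof of Theorem \ref{notindelta}, but with the role of the Borel Wadge theory played by the results of the present section: Steel's Theorem \ref{steeltheorem2}, Lemma \ref{everywhere}, and the existence of filters generating $\mathbf{\Sigma}^1_1$ and $\mathbf{\Pi}^1_1$ (Proposition \ref{existsfilter}). First I would reduce to the case in which $X$ is a dense subspace of $\CCC$. Since $X$ is properly analytic or properly coanalytic, $X$ is not Borel (an analytic coanalytic space is Borel, by Souslin's theorem), hence $X$ is not locally compact, because a homogeneous zero-dimensional locally compact space is discrete, $\CCC$, or $\omega\times\CCC$, all of which are Borel. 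In particular $X$ is dense in itself, so embedding $X$ in $\CCC$ and replacing $\CCC$ by the closure of (the image of) $X$, which is then a copy of $\CCC$, we may assume that $X$ is a dense subspace of $\CCC$.

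Next I would identify the Wadge class $\mathbf{\Gamma}=[X]$. As a subspace of $\CCC$, $X$ lies in $\mathbf{\Sigma}^1_1\setminus\mathbf{\Pi}^1_1$ if $X$ is properly analytic, and in $\mathbf{\Pi}^1_1\setminus\mathbf{\Sigma}^1_1$ if $X$ is properly coanalytic (using Souslin's theorem again). Now $\mathbf{\Sigma}^1_1$-Determinacy implies the determinacy of every game whose payoff set is a finite Boolean combination of analytic sets, so Wadge's Lemma holds at the analytic level: for every $A\in\mathbf{\Sigma}^1_1$ we have $A\leq_W X$ or $X\leq_W\CCC\setminus A$, and the second alternative is impossible since $X\notin\mathbf{\Pi}^1_1$. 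Therefore $[X]=\mathbf{\Sigma}^1_1$ in the properly analytic case, and symmetrically $[X]=\mathbf{\Pi}^1_1$ in the properly coanalytic case; write $\mathbf{\Gamma}$ for this class, so that both $\mathbf{\Gamma}$ and $\check{\mathbf{\Gamma}}$ belong to $\{\mathbf{\Sigma}^1_1,\mathbf{\Pi}^1_1\}$. By Lemma \ref{everywhere}, $X$ is everywhere properly $\mathbf{\Gamma}$.

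Suppose first that $X$ is first category. By Proposition \ref{existsfilter} there is a filter $\FF$ with $[\FF]=\mathbf{\Gamma}$. Then $\FF$ is a semifilter, hence homogeneous by Corollary \ref{semifilterhomogeneous}, dense in $\CCC$, and not locally compact; and $\FF$ is first category, since a filter with the Baire property is meager (see \cite[Theorem 8.47]{kechris}) while analytic and coanalytic filters have the Baire property. By Lemma \ref{everywhere}, $\FF$ is also everywhere properly $\mathbf{\Gamma}$. Since $\mathbf{\Gamma}$ is reasonably closed (see \cite[Proposition 14.4]{kechris}), Theorem \ref{steeltheorem2} applied to the pair $X,\FF$ produces a homeomorphism $h:\CCC\longrightarrow\CCC$ with $h[X]=\FF$, so $X\approx\FF$.

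Now suppose that $X$ is Baire. This time I would apply Proposition \ref{existsfilter} to the class $\check{\mathbf{\Gamma}}$, obtaining a filter $\FF$ with $[\FF]=\check{\mathbf{\Gamma}}$, and set $Y=\CCC\setminus\FF$. One checks easily that $Y$ is a semiideal, hence homeomorphic to the semifilter $c[Y]$ and therefore homogeneous; it is dense in $\CCC$ because $\Fin\subseteq Y$, it is not locally compact, and it is Baire because $\FF$ is meager (again by \cite[Theorem 8.47]{kechris}), so that $Y$ contains a dense Polish subspace. Moreover $[Y]=\check{[\FF]}=\mathbf{\Gamma}$ (since $[\FF]=\check{\mathbf{\Gamma}}$), so $Y$ is everywhere properly $\mathbf{\Gamma}$ by Lemma \ref{everywhere}. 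As $X$ and $Y$ are both everywhere properly $\mathbf{\Gamma}$ and both Baire, Theorem \ref{steeltheorem2} yields a homeomorphism of $\CCC$ carrying $X$ onto $Y$, whence $X\approx\CCC\setminus\FF$. The step that genuinely requires $\mathbf{\Sigma}^1_1$-Determinacy (beyond what Steel's theorem already uses) is the identification $[X]=\mathbf{\Gamma}$, which rests on Wadge's Lemma for analytic sets; the remaining verifications are routine adaptations of the Borel case.
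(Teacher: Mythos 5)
Your proposal is correct and follows essentially the same route as the paper: reduce to a dense subspace of $\CCC$, identify $[X]$ as $\mathbf{\Sigma}^1_1$ or $\mathbf{\Pi}^1_1$ (the paper cites \cite[Theorem 26.4]{kechris} for the Wadge-completeness step you spell out), invoke Lemma \ref{everywhere} and Proposition \ref{existsfilter}, check the category of $\FF$ via \cite[Theorem 8.47]{kechris}, and conclude with Theorem \ref{steeltheorem2} applied to $X$ and $\FF$ (respectively $\CCC\setminus\FF$). The extra details you supply --- the reduction to a dense embedding and the meagerness of definable filters --- are exactly the ones the paper treats as routine.
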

\begin{proof}
Assume without loss of generality that $X$ is a dense subspace of $\CCC$.
Let $\mathbf{\Gamma}=[X]$, and notice that $\mathbf{\Gamma}\in\{\mathbf{\Sigma}^1_1,\mathbf{\Pi}^1_1\}$ by \cite[Theorem 26.4]{kechris}.
It follows from Lemma \ref{everywhere} that $X$ is everywhere properly $\mathbf{\Gamma}$.

First assume that $X$ is first category. By Proposition \ref{existsfilter}, we can fix a filter $\FF$ such that $[\FF]=\mathbf{\Gamma}$.
It follows from Lemma \ref{everywhere} that $\FF$ is everywhere properly $\mathbf{\Gamma}$. Furthermore, using \cite[Theorem 21.6 and 8.47]{kechris}, it is easy to see that $\FF$ is first category.
Therefore, by Theorem \ref{steeltheorem2}, there exists a homeomorphism $h:\CCC\longrightarrow \CCC$ such that $h[X]=\FF$.

Now assume that $X$ is Baire. By Proposition \ref{existsfilter}, we can fix a filter $\FF$ such that $[\FF]=\check{\mathbf{\Gamma}}$.
Notice that $[\CCC\setminus\FF]=\mathbf{\Gamma}$. Furthermore, $\CCC\setminus\FF$ is homogeneous by Corollary \ref{finitemodificationshomogeneous}.
It follows from Lemma \ref{everywhere} that $\CCC\setminus\FF$ is everywhere properly $\mathbf{\Gamma}$. Furthermore, using the fact that $\FF$ is first category, it is easy to see that $\CCC\setminus\FF$ is Baire.
Therefore, by Theorem \ref{steeltheorem2}, there exists a homeomorphism $h:\CCC\longrightarrow \CCC$ such that $h[X]=\CCC\setminus\FF$.
\end{proof}

\begin{corollary}\label{determinacyextend}
Assume $\mathbf{\Sigma}^1_1$-Determinacy. Let $X$ be a zero-dimensional homogeneous space. Assume that $X$ is properly analytic or properly coanalytic.
Then $X$ is homeomorphic to a semifilter.
\end{corollary}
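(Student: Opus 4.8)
The plan is to deduce Corollary~\ref{determinacyextend} directly from Theorem~\ref{determinacyfilter} by invoking the easy structural facts about semifilters established earlier in the paper. First I would recall that, since $X$ is zero-dimensional and homogeneous, it is either first category or Baire (by the classical dichotomy quoted in Section~2). This reduces the proof to the two cases of Theorem~\ref{determinacyfilter}.

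In the first-category case, Theorem~\ref{determinacyfilter} gives a filter $\FF$ with $X\approx\FF$. Since every filter is in particular a semifilter, we are done immediately: $X$ is homeomorphic to a semifilter.

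In the Baire case, Theorem~\ref{determinacyfilter} gives a filter $\FF$ with $X\approx\CCC\setminus\FF$. Now I would argue exactly as in Lemma~\ref{semifiltercomplement}: a filter is a semifilter, so $\CCC\setminus\FF$ is a semiideal (the complement in $\CCC$ of a semifilter is a semiideal, being upward-closed complemented to downward-closed, containing $\varnothing$, missing $\omega$, and closed under finite modifications). Hence $c[\CCC\setminus\FF]$ is a semifilter, and the trivial homeomorphism $\CCC\setminus\FF\approx c[\CCC\setminus\FF]$ shows $X$ is homeomorphic to a semifilter. Combining the two cases completes the proof.

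There is no real obstacle here — the corollary is a packaging of Theorem~\ref{determinacyfilter} together with the first-category/Baire dichotomy and the observation (already made in Section~3 and used in Lemma~\ref{semifiltercomplement}) that complements of semifilters are semiideals. The only thing to be slightly careful about is to note explicitly that filters are semifilters, so that the ``first category'' conclusion of Theorem~\ref{determinacyfilter} already produces a semifilter without any further work.

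\begin{proof}
Since $X$ is zero-dimensional and homogeneous, it is either first category or Baire. If $X$ is first category, then by Theorem~\ref{determinacyfilter} there is a filter $\FF$ with $X\approx\FF$, and $\FF$ is in particular a semifilter. If $X$ is Baire, then by Theorem~\ref{determinacyfilter} there is a filter $\FF$ with $X\approx\CCC\setminus\FF$. As in the proof of Lemma~\ref{semifiltercomplement}, since $\FF$ is a semifilter, $\CCC\setminus\FF$ is a semiideal, so $\Ss=c[\CCC\setminus\FF]$ is a semifilter. The trivial fact that $\Ss\approx\CCC\setminus\FF\approx X$ concludes the proof.
\end{proof}
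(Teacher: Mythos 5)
Your proof is correct and follows essentially the same route as the paper: the first-category case is immediate because a filter is a semifilter, and the Baire case is exactly an application of Lemma \ref{semifiltercomplement} to $\CCC\setminus\FF$. Your explicit invocation of the first-category/Baire dichotomy for homogeneous spaces is a welcome clarification of a step the paper leaves implicit.
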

\begin{proof}
This is clear if $X$ is first category. If $X$ is Baire, apply Lemma \ref{semifiltercomplement}
\end{proof}

Finally, we remark that all the results in this section generalize in a straightforward way to the classes $\mathbf{\Sigma}^1_n$ and $\mathbf{\Pi}^1_n$ for every $n\geq 1$.

\end{document}